\DeclarePairedDelimiter\ceil{\lceil}{\rceil}
\DeclarePairedDelimiter\floor{\lfloor}{\rfloor}
\theoremstyle{plain}
\newtheorem{theorem}{Theorem}[section]
\newtheorem{proposition}[theorem]{Proposition}
\newtheorem{lemma}[theorem]{Lemma}
\theoremstyle{definition}
\newtheorem{definition}[theorem]{Definition}
\newtheorem{question}[theorem]{Question}
\theoremstyle{remark}
\newtheorem{remark}[theorem]{Remark}
\newcommand{\note}[2][\null]{%
  \marginpar{\renewcommand{\baselinestretch}{1}\vspace{-1em}\hrule\vspace{3pt}%
  \scriptsize\raggedright\textsf{#2\ifx#1\null\else\\\hfill--- 
  {\em #1}\fi}\vspace{1.5em}}%
}
\numberwithin{equation}{section}
\title{The Snapshot Problem for the Wave Equation}
\author{Fulton Gonzalez}
\address{Department of Mathematics,
Tufts University, 
Medford, MA 02155, USA}
\email{fulton.gonzalez@tufts.edu}
\author{Jue Wang}
\address{College of Science, North China Institute of Science \& Technology, Langfang, Hebei, 065201, China}
\email{wangjue\textunderscore math@126.com}
\author{Tomoyuki Kakehi}
\address{Department of Mathematics,
University of Tsukuba,
Tsukuba, Ibaraki, 305-8571, Japan}
\email{kakehi@math.tsukuba.ac.jp}
\author{Jens Christensen}
\address{Department of Mathematics, 
Colgate University,
Hamilton, NY 13346, USA}
\email{jchristensen@colgate.edu}
\date{\today}
\subjclass[2020]{Primary: 35L05, Secondary: 58J45, 43A85}
\begin{document}

\maketitle

\begin{abstract}
By definition, a \emph{wave} is a $C^\infty$ solution $u(x,t)$ of the wave equation on $\mathbb R^n$, and a \emph{snapshot} of the wave $u$ at time $t$ is the function $u_t$ on $\mathbb R^n$ given by $u_t(x)=u(x,t)$.  We show that there are infinitely many waves with given $C^\infty$ snapshots $f_0$ and $f_1$ at times $t=0$ and $t=1$ respectively, but that all such waves have the same snapshots at integer times.  We present a necessary condition for the uniqueness, and a compatibility condition for the existence, of a wave $u$ to have three given snapshots at three different times, and we show how this compatibility condition leads to the problem of small denominators and Liouville numbers.  We extend our results to shifted wave equations on noncompact symmetric spaces.  Finally, we consider the two-snapshot problem and corresponding small denominator results for the shifted wave equation on the $n$-sphere. 
\end{abstract}

\section{Introduction}

Consider the wave equation for $u(x,t)\in C^\infty(\mathbb R^n\times\mathbb R)$ given by
\begin{equation}\label{E:wave-eqn1}
\Delta u=\frac{\partial^2 u}{\partial t^2},
\end{equation}
where $\Delta=\sum_{j=1}^n \partial^2/\partial x_j^2$.  Suppose that  $u$ has Cauchy data
\begin{equation}\label{E:cauchydata}
u(x,0)=f(x),\quad \frac{\partial u}{\partial t}(x,0)=g(x)\qquad (x\in\mathbb R^n),
\end{equation}
where $f,\,g\in C^\infty(\mathbb R^n)$.  Then $u$ can be written in the well-known form
\begin{equation}\label{E:conv-soln1}
    u(x,t)=(f*S_t')(x)+(g*S_t)(x),
\end{equation}
where $S_t$ and $S_t'$ are compactly supported distributions on $\mathbb R^n$ whose Fourier-Laplace transforms are given by
\begin{equation}\label{E:wave-fund-soln}
\begin{aligned}
    \frac{\sin(t\,\sqrt{\zeta\cdot\zeta})}{\sqrt{\zeta\cdot\zeta}}&=\int_{\mathbb R^n} e^{-i\,\zeta\cdot x}\,dS_t(x), \\
	\cos(t\, \sqrt{\zeta\cdot\zeta})&=\int_{\mathbb R^n} e^{-i\,\zeta\cdot x}\,dS_t'(x)\qquad (\zeta\in\mathbb C^n).
\end{aligned}
\end{equation}
(See, for example, \cite{HelgasonMultitemporal} for details. We can also write the first function above as $t\,\text{sinc}(t\sqrt{\zeta\cdot\zeta})$.)  The distributions $S_t$ and $S_t'$ are rotation-invariant and have support in the closed ball $\overline B_{|t|}(0)$, since for fixed $t$ the left hand sides of \eqref{E:wave-fund-soln} are entire functions on $\mathbb C^n$ of exponential type $|t|$. (See \cite{Ho2}, Theorem 16.3.10.)  Following \cite{HelgasonMultitemporal}, we call the family of distribution-pairs $(S_t,\,S_t')_{t\in\mathbb R}$ the \emph{fundamental solution} to the wave equation \eqref{E:wave-eqn1}.

By definition, a \emph{wave} in $\mathbb R^n$ is a solution of the wave equation \eqref{E:wave-eqn1}.  Throughout this article, we will assume that our waves are $C^\infty$ functions of $(x,t)\in\mathbb R^n\times \mathbb R$. (This will certainly be the case when the Cauchy data \eqref{E:cauchydata} are $C^\infty$.) For any fixed $t\in\mathbb R$, we define the \emph{snapshot of the wave $u$ at time $t$} to be the function $u_t$ on $\mathbb R^n$ given by
\begin{equation}\label{E:snapshot-def}
u_t(x)=u(x,t)\qquad (x\in\mathbb R^n).
\end{equation}

It is of interest to determine whether the Cauchy data \eqref{E:cauchydata} can be replaced by snapshot data in determining the wave $u$. To be more precise, suppose that we are given $C^\infty$ functions $f_0(x)$ and $f_1(x)$ on $\mathbb R^n$. Does there exist a wave $u(x,t)$ such that $u_0=f_0$ and $u_1=f_1$?  (There is no loss of generality in considering snapshots at times $t=0$ and $t=1$.) And if such a $u$ exists, is it unique?

The above questions were considered for waves in $\mathbb R^3$ by Fritz John in his 1955 book \emph{Plane Waves and Spherical Means Applied to Partial Differential Equations} (\cite{JohnPlaneWaves}). In the book, John used elementary methods involving mean value operators to show that such a wave $u$ exists. John also showed that while the wave $u$ is not unique, the integer time snapshots $u_k$ are uniquely determined by $u_0$ and $u_1$ for any $k\in\mathbb Z$.

One of the aims of this paper is to generalize John's results to wave equations on Euclidean spaces of arbitrary dimension. Another main aim is to formulate necessary and sufficient conditions for a wave $u(x,t)$ to be determined uniquely by three snapshots.  In the latter part of the paper, we will also consider the snapshot problem for shifted wave equations on noncompact symmetric spaces and on spheres.

The paper is organized as follows.  In Section 2, we introduce Ehrenpreis' notions of invertible distributions and slowly decreasing functions to show, in Proposition \ref{T:two-snapshot-existence}, 
that there exist infinitely many waves with two given snapshots at times $t=0$ and $t=1$.  In Theorem \ref{T:integersnapshot}, basic trigonometric identities are then used to obtain a formula for the snapshot at any integer time.

Section 3 is the main part of the paper.  In it we consider the existence and uniqueness question for a wave to have three given snapshots at three different times, which can be taken to be $t=0,\,1$, and $\alpha$. Uniqueness is shown to be a straightforward consequence of the recursion formula in Theorem \ref{T:integersnapshot2}. As for existence, it is shown in \eqref{E:compatibility}--\eqref{E:compatibility-three} that three given $C^\infty$ functions on $\mathbb R^n$ can be snapshots of a wave $u(x,t)$ at three different times only if they satisfy a compatibility condition expressible in terms of convolutions.  The question of whether this compatibility condition is sufficient leads to the problem of ``small denominators.'' Specifically, if $\alpha$ is an irrational number, the sufficiency of this compatibility condition  boils down to whether the sequence $k\mapsto \sin \pi k\alpha$ does not have a rapidly decreasing subsequence, which in turn depends on whether or not $\alpha$ is a \emph{Liouville number}; i.e., a number that in a precise sense can be closely approximated by rational numbers.
(See Definition \ref{D:liouville-number} below.)

The problem of small denominators has a long pedigree, starting more than a century ago in the study of the $n$-body problem, and continuing with important insights by Kolmogorov, Arnold, Moser, and others in the 1960s.  See Yoccoz's paper \cite{Yoccoz1992} for a readable introduction to the subject and a useful list of references.

The main results in Section 3 are as follows.  When $\alpha$ is irrational, Theorems \ref{T:liouville2}  and \ref{T:irrational-not-Liouville} use functional analytic arguments to show that the compatibility condition guarantees the existence of a unique wave with given snapshots at times $t=0,1,\alpha$ if and only if $\alpha$ is not a Liouville number.  The ``if'' part invokes a result of Ehrenpreis on ideals of spaces of holomorphic functions in $\mathbb C^n$ generated by functions which are jointly slowly decreasing.

When $\alpha$ is rational, we will show in Theorem \ref{T:psi-compatibility} that a stronger set of compatibility conditions are both necessary and sufficient for a wave to have snapshots $f_0,\,f_1,\,f_\alpha$ at times $t=0,1,\alpha$.

In Section 4, we extend our results to shifted wave equations on noncompact symmetric spaces. Some care is needed to adapt our Euclidean results since the harmonic analysis on such spaces is no longer commutative.  Such equations were studied in various contexts by Semenov-Tian-Shansky \cite{Semenov-Tian-Shansky}, Helgason \cite{HelgasonHuygens}, Branson-\'Olafsson-Pasquale \cite{Olafsson2005}, Hassani \cite{Hassani2011}, and others.

Finally, in Section 5, we consider the two-snapshot problem for the shifted wave equation on the sphere $S^n$, and its relation to the problem of small denominators. This type of wave equation on the sphere was introduced by Lax and Phillips in 1978 (\cite{LaxPhillips1978}), who used the easily obtained series solution to deduce Huygens' principle.  They then employ a conformal map to deduce Huygens' principle for the standard wave equation in $\mathbb R^n$.

We prove in Theorem \ref{T:snapshot-sphere-uniqueness} and Theorem \ref{T:sphere-snapshot-existence} that there is a unique wave $u(x,t)$ on the sphere $S^n$ with given snapshots at times $t=0$ and $t=\alpha$ if and only if $\alpha/\pi$ is irrational but not a Liouville number in the case when $n$ is odd.  When $n$ is even, existence and uniqueness occur if and only if either $\alpha/2\pi$  is irrational but not a Liouville number of odd type, or $\alpha/\pi$ is rational with odd numerator when written in lowest terms. Here the issue of small denominators occurs again as a consequence of the rate of decay of certain Schur constants.  A similar problem (in the context of Sobolev spaces) with shifted Cauchy data and concomitant small denominator problem was considered by Rubin in 2000 (\cite{RubinSmallDenom}).

\emph{Notation.} We use $\mathscr E(\mathbb R^n)$ and $\mathscr D(\mathbb R^n)$ to denote the vector spaces $C^\infty(\mathbb R^n)$ and $C_c^\infty(\mathbb R^n)$, equipped with their usual locally convex Frech\'et and inductive limit topologies, respectively.  Their dual spaces $\mathscr E'(\mathbb R^n)$ and $\mathscr D'(\mathbb R^n)$ can be endowed either with their strong or weak* topologies, as this will not be important in the mathematical content of this paper.  

If $X$ is any smooth manifold, the locally convex spaces $\mathscr E(X)$ and $\mathscr D(X)$ are defined similarly.  See \cite{GGA}, Ch.~II, \S2 for details about their topology.

\section{Convolution Operators and Waves with Two Given Snapshots}

For a (compactly supported) distribution $T\in\mathscr E'(\mathbb R^n)$, let $C_T\colon \mathscr E(\mathbb R^n)\to\mathscr E(\mathbb R^n),\;f\mapsto f*T$ be the associated convolution operator.  It is a subject of great interest (and the corresponding literature is vast) to determine the elements of the kernel of $C_T$, which are called \emph{mean periodic functions} with respect to $T$.  It is also of interest to determine the range of $C_T$, and in particular to determine whether $C_T$ is surjective.  For the latter question, a necessary and sufficient condition for the surjectivity of $T$ was obtained by Ehrenpreis in 1960 \cite{Ehrenpreis1960}.  To state this condition properly, we will need to define the notion of a \emph{slowly decreasing function}.

\begin{definition}\label{D:slow-decrease}
A function $F$ on $\mathbb C^n$ is said to be \emph{slowly decreasing} provided that there exists a constant $A>0$ such that for any point $\xi\in\mathbb R^n$, there is a point $\eta\in\mathbb C^n$ for which $|\eta-\xi|<A\,\log(2+|\xi|)$ and
\begin{equation}\label{E:slow-decrease1}
|F(\eta)|\geq (A+|\xi|)^{-A}.
\end{equation}
\end{definition}
Suppose that $F$ is entire in $\mathbb C^n$ and is the Fourier-Laplace transform of a compactly supported distribution on $\mathbb R^n$.  If $F$ happens to be slowly decreasing, then the point $\eta$ satisfying \eqref{E:slow-decrease1} can in fact be found in $\mathbb R^n$.  This is a consequence of Theorem 5 in \cite{Ehrenpreis1955}, which Ehrenpreis calls a ``minimum modulus theorem."  The reason why slowly decreasing functions are important can be found in the following theorem.

\begin{theorem}[\cite{Ehrenpreis1960}; see also \cite{Ho2}, Ch.~XVI.]\label{T:CT-surjectivity}

Let $T\in\mathscr E'(\mathbb R^n)$. Then the following conditions on $T$ are equivalent:
\begin{enumerate}[(a)]
\item The convolution operator $C_T\colon \mathscr E(\mathbb R^n)\to\mathscr E(\mathbb R^n)$ is surjective.
\item The convolution operator $C_T\colon \mathscr D'(\mathbb R^n)\to\mathscr D'(\mathbb R^n)$ is surjective.
\item There is a distribution $S\in\mathscr D'(\mathbb R^n)$ such that $S*T=\delta_0$.
\item The Fourier-Laplace transform $\widetilde T$ is slowly decreasing.
\item For any $S\in\mathscr E'(\mathbb R^n)$ such that $\widetilde S/\widetilde T$ is an entire function on $\mathbb C^n$, there is a distribution $\mu\in\mathscr E'(\mathbb R^n)$ such that $ \widetilde S=\widetilde T\cdot \widetilde \mu $.
\item If $S\in\mathscr E'(\mathbb R^n)$ and $S*T\in C^\infty(\mathbb R^n)$, then $S\in C^\infty(\mathbb R^n)$
\end{enumerate}
\end{theorem}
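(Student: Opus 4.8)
We may assume $T\neq0$, the case $T=0$ being vacuous (all six conditions then fail). The plan is to treat the equivalence (a)$\,\Leftrightarrow\,$(d) as the analytic core and obtain the remaining conditions around it. First I would pass to transposes. By the closed range theorem for Fr\'echet spaces, $C_T$ is surjective on $\mathscr{E}(\mathbb{R}^n)$ exactly when the transpose $C_{\check T}\colon\mathscr{E}'(\mathbb{R}^n)\to\mathscr{E}'(\mathbb{R}^n)$, convolution by the reflected distribution $\check T$, is injective with weak-$*$ closed range; likewise (b) amounts to $C_{\check T}$ being injective with closed range on $\mathscr{D}(\mathbb{R}^n)$. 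Injectivity is immediate, since $\widetilde{\check T}\not\equiv0$ forces any $\mu$ with $\widetilde\mu\,\widetilde{\check T}\equiv0$ to vanish, so the content in each case is the closed range property. Next I would apply the Fourier--Laplace transform: by the Paley--Wiener--Schwartz theorem $\widehat{\mathscr{E}'(\mathbb{R}^n)}$ is the Paley--Wiener algebra $\mathrm{PW}$ of entire functions on $\mathbb{C}^n$ of exponential type with polynomial growth along $\mathbb{R}^n$, and $C_{\check T}$ becomes multiplication by $\widetilde T$ up to reflection. Its range is then the principal ideal $\widetilde T\cdot\mathrm{PW}$ sitting inside $\mathrm{PW}$, and this is closed precisely when division by $\widetilde T$ does not leave $\mathrm{PW}$: whenever $G\in\mathrm{PW}$ and $G/\widetilde T$ is entire, one must have $G/\widetilde T\in\mathrm{PW}$ with uniform control of the exponential type and the constants.

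Proving that this division estimate is equivalent to $\widetilde T$ being slowly decreasing is the heart of the theorem, and the step I expect to be the main obstacle. For the direction (d)$\,\Rightarrow\,$closed range I would invoke Ehrenpreis' minimum modulus theorem (Theorem~5 of \cite{Ehrenpreis1955}): because $\widetilde T$ is the transform of a compactly supported distribution, the point $\eta$ of Definition~\ref{D:slow-decrease} can be taken in $\mathbb{R}^n$, and, more usefully, one obtains a lower bound $|\widetilde T(\zeta)|\ge(A+|\zeta|)^{-A}$ away from a sparse union of small balls, with control on how that bound propagates over distances of order $\log(2+|\zeta|)$. Writing $H=G/\widetilde T$, one estimates $|H(\zeta)|$ by combining this lower bound for $\widetilde T$ at a nearby good point, a Phragm\'en--Lindel\"of / maximum-modulus bound for the exponential-type function $G$ over a ball of radius $\sim\log(2+|\zeta|)$, and Cauchy estimates to carry the bound back to $\zeta$ without degrading the exponential-times-polynomial growth; this places $H$ in $\mathrm{PW}$. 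Conversely, if $\widetilde T$ is not slowly decreasing there is a sequence $\xi_k\to\infty$ near which $\widetilde T$ is smaller than every negative power of $|\xi_k|$ on logarithmically large balls, and from this one builds $G\in\mathrm{PW}$ whose quotient $G/\widetilde T$ is entire but of strictly larger exponential type, so that the ideal is not closed; this gives (a)$\,\Rightarrow\,$(d) and (b)$\,\Rightarrow\,$(d).

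It remains to harvest the other conditions. Condition (c) is the instance of surjectivity solving $S*T=\delta_0$, so (b)$\,\Rightarrow\,$(c) is trivial, while from a convolution inverse one recovers, by Ehrenpreis' analysis of $1/\widetilde T$, that $\widetilde T$ must be slowly decreasing, so (c) joins the chain. Condition (e) is division inside $\mathscr{E}'$: granting (d) and $\widetilde S/\widetilde T$ entire, the estimate of the previous paragraph with $G=\widetilde S$ puts $\widetilde S/\widetilde T$ in $\mathrm{PW}$, so it equals $\widetilde\mu$ for some $\mu\in\mathscr{E}'$, while a failure of (d) is exposed by a suitable choice of $S$. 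Condition (f) follows the same template: if $S\in\mathscr{E}'$ and $S*T\in C^\infty$ then in fact $S*T\in C_c^\infty$, so $\widetilde S\,\widetilde T$ is rapidly decreasing on $\mathbb{R}^n$; evaluating at the good points $\eta$ from the minimum modulus theorem forces $\widetilde S$ to decay faster than any polynomial there, and exponential type together with Cauchy estimates propagates this to all of $\mathbb{R}^n$, whence $S\in C_c^\infty(\mathbb{R}^n)$ by Paley--Wiener; a non-smooth $S$ with smooth $S*T$ is produced from any failure of (d). The genuinely hard ingredient is confined to the second paragraph --- the minimum modulus theorem and the rigidity of exponential-type functions --- with everything else being the functional-analytic bookkeeping of duality and Paley--Wiener; for a paper of this type it is reasonable to leave the complete details to \cite{Ehrenpreis1960} and \cite{Ho2}, Ch.~XVI.
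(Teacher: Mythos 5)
The paper does not prove this theorem at all: it is imported verbatim from Ehrenpreis (1960) and H\"ormander, Ch.~XVI, so there is no internal argument to compare against. Your outline is a faithful reconstruction of the standard proof in those sources --- duality reducing surjectivity of $C_T$ to the closed-range property of multiplication by $\widetilde T$ on the Paley--Wiener algebra, the minimum modulus theorem plus maximum-principle/Cauchy estimates giving division with bounds when $\widetilde T$ is slowly decreasing, and the remaining conditions (c), (e), (f) harvested from the same division estimate --- and you correctly isolate the minimum modulus theorem as the genuinely hard ingredient and defer it to the references, which is exactly what the paper itself does.

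Two small inaccuracies worth fixing. First, when $T=0$ condition (e) is vacuously \emph{true} (no $S$ has $\widetilde S/\widetilde T$ entire), so not all six conditions fail; the honest statement is simply that one assumes $T\neq 0$ throughout. Second, $\mathscr D'(\mathbb R^n)$ is the dual of the LF-space $\mathscr D(\mathbb R^n)$, not of a Fr\'echet space, so the duality criterion you invoke for (b) needs the Dieudonn\'e--Schwartz version for (duals of) LF-spaces rather than the Fr\'echet closed range theorem; the conclusion is the same but the citation is different. Neither point affects the architecture of the argument.
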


Since nonzero polynomials on $\mathbb C^n$ are slowly decreasing, Ehrenpreis' theorem implies, in particular, that any nonzero constant coefficient differential operator $D$ on $\mathbb R^n$ has a fundamental solution, and is surjective on $\mathscr E(\mathbb R^n)$, as well as on $\mathscr D'(\mathbb R^n)$.  More generally, in \cite{Ehrenpreis1955} Ehrenpreis also showed that convolution with any distribution supported in finitely many points in $\mathbb R^n$ is surjective on $\mathscr E(\mathbb R^n)$ (and $\mathscr D'(\mathbb R^n))$.

A distribution $T\in\mathscr E'(\mathbb R^n)$ which satisfies any of the equivalent conditions in Theorem \ref{T:CT-surjectivity} is said to be \emph{invertible}.  

Let us now return to the two-snapshot problem, namely: given functions $f_0$ and $f_1$ in $\mathscr E(\mathbb R^n)$, does there exist a wave $u(x,t)\in C^\infty(\mathbb R^n\times\mathbb R)$ such that $u_0=f_0$ and $u_1=f_1$; moreover, if $u$ exists, is it determined uniquely?

Given the solution \eqref{E:conv-soln1} of the wave equation in terms of its Cauchy data, the question of whether $u$ exists amounts to asking whether there exists a function $g\in\mathscr E(\mathbb R^n)$ such that
\begin{equation}\label{E:initvelocitycond}
g*S_1=f_1-f_0*S_1',
\end{equation}
in which case the formula \eqref{E:conv-soln1} gives us the wave $u$.  In other words, can we impart an initial velocity 
$(\partial_t u)(x,0)=g(x)$ to a wave $u(x,t)$ whose initial position is $u(x,0)=f_0(x)$ so that at time $t=1$ the wave has position (or snapshot) $u(x,1)=f_1(x)$?

To answer this question, we observe that the Fourier-Laplace transform $\widetilde S_1(\zeta)=\sin(\sqrt{\zeta\cdot\zeta})/\sqrt{\zeta\cdot\zeta}$ is a slowly decreasing function on $\mathbb C^n$, since its restriction to $\mathbb R^n$ is $\widetilde S_1(\xi)=\sin (|\xi|)/|\xi|$, a radial sine wave modulated by the distance to the origin.   Theorem \ref{T:CT-surjectivity} therefore implies that the convolution operator $C_{S_1}$ is surjective on $\mathscr E(\mathbb R^n)$, so there does indeed exist a smooth function $g$ satisfying \eqref{E:initvelocitycond}.

The set of all $g\in \mathscr E(\mathbb R^n)$ satisfying \eqref{E:initvelocitycond} is infinite since the kernel of the convolution operator $C_{S_1}$ is infinite-dimensional.  In fact it is easy to see that any exponential of the form $x\mapsto e^{-ix\cdot\zeta}$, where $\widetilde S_1(\zeta)=0$, belongs to the kernel of $C_{S_1}$.  In Theorem \ref{T:C_S1-kernel} below, we provide a more precise description of $\ker C_{S_1}$. (See also \cite{Ehrenpreis1970}, \cite{BerensteinTaylor}, and \cite{StruppaMemoirs} for more general descriptions of kernels of convolutions with invertible distributions.) This will be needed in the sequel. 

\begin{proposition}\label{T:two-snapshot-existence}
Given functions $f_0,\,f_1\in\mathscr E(\mathbb R^n)$, there exist infinitely many waves $u(x,t)$ such that $u_0=f_0$ and $u_1=f_1$.
\end{proposition}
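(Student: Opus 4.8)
The plan is to reduce the two-snapshot problem to the surjectivity of the convolution operator $C_{S_1}$, which Ehrenpreis' theorem already supplies, and then to harvest infinitely many solutions from the infinite-dimensional kernel of $C_{S_1}$.

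First I would record the bijective parametrization of waves with a prescribed snapshot at $t=0$. Since $\widetilde{S_0}=0$ and $\widetilde{S_0'}=1$, we have $S_0=0$ and $S_0'=\delta_0$, so \eqref{E:conv-soln1} gives $u(x,0)=f(x)$ and, differentiating \eqref{E:wave-fund-soln} in $t$ at $t=0$, $(\partial_t u)(x,0)=g(x)$. By uniqueness for the smooth Cauchy problem, \emph{every} wave $u$ with $u_0=f_0$ is recovered from its initial velocity $g:=(\partial_t u)(\cdot,0)\in\mathscr E(\mathbb R^n)$ as $u(x,t)=(f_0*S_t')(x)+(g*S_t)(x)$, and conversely every choice of $g$ yields such a wave. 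Hence $u\mapsto(\partial_t u)(\cdot,0)$ identifies the set of waves with $u_0=f_0$ with $\mathscr E(\mathbb R^n)$, and the constraint $u_1=f_1$ becomes precisely the linear equation \eqref{E:initvelocitycond}, i.e. $g*S_1=f_1-f_0*S_1'$.

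Next I would invoke the fact recorded just before the statement that $\widetilde{S_1}(\zeta)=\sin(\sqrt{\zeta\cdot\zeta})/\sqrt{\zeta\cdot\zeta}$ is slowly decreasing in the sense of Definition \ref{D:slow-decrease}: on $\mathbb R^n$ it equals $\sin|\xi|/|\xi|$, whose zero set is the union of the spheres $|\xi|\in\pi\mathbb Z$, and on the complementary shells one can find a real $\eta$ with $|\eta-\xi|$ of order $1$ and $|\widetilde{S_1}(\eta)|$ bounded below by a fixed negative power of $|\xi|$. By Theorem \ref{T:CT-surjectivity} ((d)$\Rightarrow$(a)) the operator $C_{S_1}$ is therefore surjective on $\mathscr E(\mathbb R^n)$, so \eqref{E:initvelocitycond} admits at least one solution $g_0$, producing at least one wave with the required snapshots.

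Finally, for the infinitude I would note that the solution set of \eqref{E:initvelocitycond} is the affine subspace $g_0+\ker C_{S_1}$, and that $\ker C_{S_1}$ is infinite-dimensional: for each integer $k\ge 1$, putting $\zeta_k=(\pi k,0,\dots,0)\in\mathbb R^n$ gives $\zeta_k\cdot\zeta_k=\pi^2k^2$ and hence $\widetilde{S_1}(\zeta_k)=\sin(\pi k)/(\pi k)=0$; since a direct computation (using that $\widetilde{S_1}$ is even) yields $e^{-ix\cdot\zeta}*S_1=\widetilde{S_1}(\zeta)\,e^{-ix\cdot\zeta}$, the functions $x\mapsto e^{-ix\cdot\zeta_k}$ lie in $\ker C_{S_1}$ and are $\mathbb R$-linearly independent. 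Because $u$ determines $g=(\partial_t u)(\cdot,0)$, distinct elements of $g_0+\ker C_{S_1}$ give distinct waves, all with snapshots $f_0$ and $f_1$ at $t=0,1$, which is the desired conclusion. The only steps needing genuine care — the substance having been absorbed into Ehrenpreis' theorem — are the uniform verification of the slowly decreasing estimate for $\widetilde{S_1}$ (where the logarithmic slack $A\log(2+|\xi|)$ and the minimum-modulus remark are what make it work) and the bookkeeping that the chosen kernel exponentials really are independent, so that the solution set is truly infinite. This last point is the closest thing to an obstacle, and it is mild.
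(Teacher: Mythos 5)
Your argument is correct and follows essentially the same route as the paper: reduce to the convolution equation \eqref{E:initvelocitycond}, use the slow decrease of $\widetilde S_1$ together with Theorem \ref{T:CT-surjectivity} to obtain surjectivity of $C_{S_1}$, and then observe that the solution set is an affine translate of the infinite-dimensional kernel of $C_{S_1}$ (the paper likewise points to the exponentials $x\mapsto e^{-ix\cdot\zeta}$ with $\widetilde S_1(\zeta)=0$). The extra bookkeeping you supply — the bijection between waves with $u_0=f_0$ and initial velocities, and the linear independence of the kernel exponentials — is accurate and only makes explicit what the paper leaves implicit.
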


For each $\lambda\in\mathbb C$, let $\mathscr E_\lambda$ be the eigenspace of the Laplace operator $\Delta$ given by
$$
\mathscr E_\lambda=\{f\in\mathscr E(\mathbb R^n)\,\colon\,\Delta f=-\lambda^2\,f\}.
$$
Note that $\mathscr E_\lambda$ is nonzero since it contains the function $(x_1,\ldots,x_n)\mapsto e^{i\lambda x_1}$.

\begin{lemma}\label{T:eigenspace1}
Each $\mathscr E_\lambda$ is an eigenspace of the convolution operator $C_{S_t}$; in fact, we have
\begin{equation}\label{E:conv-eigenspace1}
g*S_t=\frac{\sin(t\lambda)}{\lambda}\,g
\end{equation}
for all $g\in \mathscr E_\lambda$ and all $t\in\mathbb R$.
\end{lemma}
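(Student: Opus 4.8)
The plan is to reduce the identity \eqref{E:conv-eigenspace1} to a scalar second‑order ODE in the time variable, using that $\Delta$ acts on $\mathscr E_\lambda$ by the scalar $-\lambda^2$. First I would note that the statement is meaningful: since $S_t\in\mathscr E'(\mathbb R^n)$ is compactly supported and (by \eqref{E:wave-fund-soln}) depends smoothly on $t$, the convolution $g*S_t$ is a well‑defined element of $\mathscr E(\mathbb R^n)$ for every $g\in\mathscr E(\mathbb R^n)$ — no integrability or temperedness of $g$ is needed — and $w(x,t):=(g*S_t)(x)$ is jointly $C^\infty$ on $\mathbb R^n\times\mathbb R$.

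Then I would fix $g\in\mathscr E_\lambda$ and set $w(x,t)=(g*S_t)(x)$. By the solution formula \eqref{E:conv-soln1} applied with Cauchy data $(f,g)=(0,g)$, the function $w$ is the wave with $w(\cdot,0)=0$ and $(\partial_t w)(\cdot,0)=g$. On the other hand, differentiation in $x$ commutes with convolution by $S_t$, so $\Delta_x w=(\Delta g)*S_t=-\lambda^2\,(g*S_t)=-\lambda^2 w$. Combining these, $\partial_t^2 w=\Delta_x w=-\lambda^2 w$, so for each fixed $x$ the scalar function $t\mapsto w(x,t)$ solves the initial value problem $W''=-\lambda^2 W$, $W(0)=0$, $W'(0)=g(x)$, whose unique solution is $W(t)=\frac{\sin(t\lambda)}{\lambda}\,g(x)$ — read as $t\,g(x)$ in the degenerate case $\lambda=0$, and valid verbatim for complex $\lambda$ since $\lambda\mapsto\sin(t\lambda)/\lambda$ is entire. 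This is precisely \eqref{E:conv-eigenspace1}; in particular $C_{S_t}$ carries each $\mathscr E_\lambda$ into itself, acting on it by the scalar $\sin(t\lambda)/\lambda$.

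If one prefers not to invoke \eqref{E:conv-soln1}, the ingredients can instead be read off \eqref{E:wave-fund-soln} by differentiating the Fourier--Laplace transforms in $t$: one obtains $\partial_t S_t=S_t'$ and $\partial_t S_t'=\Delta S_t$, together with $S_0=0$ and $S_0'=\delta_0$, whence $\partial_t^2 w=g*\partial_t S_t'=(\Delta g)*S_t=-\lambda^2 w$ with $w(\cdot,0)=g*S_0=0$ and $(\partial_t w)(\cdot,0)=g*S_0'=g$, giving the same ODE. As a consistency check, for the special eigenfunctions $e^{-i\zeta\cdot x}\in\mathscr E_\lambda$ with $\zeta\cdot\zeta=\lambda^2$ a one‑line computation with \eqref{E:wave-fund-soln} already gives $e^{-i\zeta\cdot x}*S_t=\frac{\sin(t\lambda)}{\lambda}\,e^{-i\zeta\cdot x}$; the ODE argument has the advantage of covering all of $\mathscr E_\lambda$ at once, which is strictly larger than the span of such exponentials.

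I do not anticipate a genuine obstacle. The only points deserving a little care are the legitimacy of convolving the compactly supported $S_t$ with a possibly unbounded smooth function, the joint smoothness of $(x,t)\mapsto(g*S_t)(x)$ and the interchange of $\partial_t$ with the convolution, and the bookkeeping in the degenerate case $\lambda=0$ — all routine.
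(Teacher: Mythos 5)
Your proof is correct and is essentially the paper's argument: both identify $g*S_t$ as the wave with Cauchy data $(0,g)$ and use $\Delta g=-\lambda^2 g$ to conclude. The only cosmetic difference is that you finish by solving the pointwise ODE $W''=-\lambda^2 W$ in $t$, whereas the paper observes that $\frac{\sin(t\lambda)}{\lambda}\,g$ is itself a wave with the same Cauchy data and invokes uniqueness of the Cauchy problem.
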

\emph{Remark.} The equality \eqref{E:conv-eigenspace1} is not surprising, since any element of $\mathscr E_\lambda$ is real analytic, and from \eqref{E:wave-fund-soln} the convolution operator $C_{S_t}$ can be written formally as $C_{S_t}=\sum_{k=1}^\infty (t^{2k+1}\Delta^k)/(2k+1)!$.
\begin{proof}  
Fix $g\in\mathscr E_\lambda$. The relation \eqref{E:conv-eigenspace1} is then immediate from the fact that both sides are solutions of the wave equation \eqref{E:wave-eqn1} with Cauchy data $u(x,0)=0,\; u_t(x,0)=g(x)$.
\end{proof}

Lemma \ref{T:eigenspace1} enables us to provide an explicit description below of the kernel of the convolution operator $C_{S_t}\colon\mathscr E(\mathbb R^n)\to\mathscr E(\mathbb R^n)$.  This may in some sense be considered to be a refinement (for $C_{S_t}$) of the general description of convolution kernels given, for example, in \cite{BerensteinTaylor}, Theorem 9.1.

\begin{theorem}\label{T:C_S1-kernel}
For any $t\neq 0$, the kernel of $C_{S_t}$ is the closure in $\mathscr E(\mathbb R^n)$ of the direct sum
\begin{equation}\label{E:kerC_St}
\bigoplus_{j=1}^\infty \mathscr E_{\pi j/t}.
\end{equation}
\end{theorem}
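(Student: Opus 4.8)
The plan is to prove the two inclusions separately. The inclusion $\supseteq$ is elementary: by Lemma~\ref{T:eigenspace1}, every $g\in\mathscr E_{\pi j/t}$ satisfies $g*S_t=\tfrac{t\sin(\pi j)}{\pi j}\,g=0$, so each $\mathscr E_{\pi j/t}$ lies in $\ker C_{S_t}$; moreover the sum $\sum_{j\ge1}\mathscr E_{\pi j/t}$ is direct, since the $\mathscr E_{\pi j/t}$ are eigenspaces of $\Delta$ for the distinct eigenvalues $-(\pi j/t)^2$ (from $\sum_{j=1}^N g_j=0$ with $g_j\in\mathscr E_{\pi j/t}$ one isolates $g_m$ by applying $\prod_{k\le N,\,k\ne m}(\Delta+(\pi k/t)^2)$); and $\ker C_{S_t}$ is closed because $C_{S_t}$ is continuous, so it contains the closure of $\bigoplus_{j\ge1}\mathscr E_{\pi j/t}$.

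For the reverse inclusion I would argue by duality. Since $\mathscr E(\mathbb R^n)$ is a Fr\'echet space, the closure of $\bigoplus_{j\ge1}\mathscr E_{\pi j/t}$ is exactly the set of $f\in\mathscr E(\mathbb R^n)$ annihilated by every $T\in\mathscr E'(\mathbb R^n)$ that vanishes on all of the $\mathscr E_{\pi j/t}$; so it suffices to show that any such $T$ also annihilates $\ker C_{S_t}$. Fix such a $T$. For $\zeta\in\mathbb C^n$ with $\zeta\cdot\zeta=(\pi j/t)^2$ the function $x\mapsto e^{-i\zeta\cdot x}$ lies in $\mathscr E_{\pi j/t}$ (since $\Delta e^{-i\zeta\cdot x}=-(\zeta\cdot\zeta)e^{-i\zeta\cdot x}=-(\pi j/t)^2e^{-i\zeta\cdot x}$), whence $\widetilde T(\zeta)=\langle T_x,e^{-i\zeta\cdot x}\rangle=0$; thus $\widetilde T$ vanishes on the entire zero set of $\widetilde S_t$.

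It remains to analyze that zero set and exploit the fact that $S_t$ is invertible. Writing $w=\zeta\cdot\zeta$, we have $\widetilde S_t(\zeta)=\phi_t(w)$ with $\phi_t(w)=\sin(t\sqrt w)/\sqrt w$ entire; the Weierstrass product $\sin(\pi z)=\pi z\prod_{j\ge1}(1-z^2/j^2)$ gives $\phi_t(w)=t\prod_{j\ge1}\bigl(1-t^2w/(\pi^2 j^2)\bigr)$, so $\phi_t$ vanishes precisely, and to first order, at the points $w=(\pi j/t)^2$, $j\ge1$. Hence the zero set of $\widetilde S_t$ is the disjoint union $\bigsqcup_{j\ge1}Q_j$ with $Q_j=\{\zeta:\zeta\cdot\zeta=(\pi j/t)^2\}$ a nonsingular affine quadric (nonsingular because $(\pi j/t)^2\ne0$, so $d(\zeta\cdot\zeta)=2\sum_i\zeta_i\,d\zeta_i$ is nonzero on $Q_j$), along which $\widetilde S_t$ vanishes simply (since $\phi_t'((\pi j/t)^2)\ne0$). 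Because $\widetilde T$ vanishes on each smooth component $Q_j$ and $\widetilde S_t$ has only simple zeros there, the quotient $\widetilde T/\widetilde S_t$ is holomorphic on all of $\mathbb C^n$: near any point of $Q_j$ one may take $w-(\pi j/t)^2$ as one of the local coordinates, and there $\widetilde S_t$ equals that coordinate times a nonvanishing factor while $\widetilde T$ is divisible by it. Since $\widetilde S_t$ is slowly decreasing (for every $t\ne0$), $S_t$ is invertible, so by Theorem~\ref{T:CT-surjectivity}(e) (with $S_t$ in the role of $T$ and our $T$ in the role of $S$) there is $\mu\in\mathscr E'(\mathbb R^n)$ with $\widetilde T=\widetilde S_t\,\widetilde\mu$, i.e.\ $T=S_t*\mu$. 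Finally, as $S_t$ is even ($\widetilde S_t$ being a function of $\zeta\cdot\zeta$), for any $f\in\ker C_{S_t}$ we get $\langle T,f\rangle=\langle S_t*\mu,f\rangle=\langle\mu,\,S_t*f\rangle=0$. Thus $\ker C_{S_t}\subseteq\overline{\bigoplus_{j\ge1}\mathscr E_{\pi j/t}}$, which together with the first inclusion proves the theorem.

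I expect the only genuinely non-formal step to be the claim that $\widetilde T/\widetilde S_t$ is entire: the set-theoretic vanishing of $\widetilde T$ on the zero variety of $\widetilde S_t$ has to be upgraded to honest divisibility, and this uses that $\widetilde S_t$ vanishes to exactly first order along smooth components — which in turn rests on the simplicity of the zeros of the one-variable function $\phi_t(w)=\sin(t\sqrt w)/\sqrt w$ and the nonsingularity of the quadrics $\zeta\cdot\zeta=\text{const}\ne0$. It is this very special structure of $\widetilde S_t$ (a slowly decreasing function of the single quadratic $\zeta\cdot\zeta$ whose zero set is a locally finite disjoint union of smooth hypersurfaces carrying only simple zeros) that lets one sidestep the general and much heavier machinery describing kernels of arbitrary convolution operators (cf.\ \cite{BerensteinTaylor}, Theorem~9.1, as well as \cite{Ehrenpreis1970} and \cite{StruppaMemoirs}), and that makes the kernel collapse so cleanly onto the eigenspaces $\mathscr E_{\pi j/t}$.
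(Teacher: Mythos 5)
Your proof is correct and follows essentially the same route as the paper: Hahn--Banach duality reduces the reverse inclusion to showing that any $T\in\mathscr E'(\mathbb R^n)$ annihilating every $\mathscr E_{\pi j/t}$ factors as $T=S_t*\mu$, which you obtain, exactly as the paper does, from the vanishing of $\widetilde T$ on the zero variety of $\widetilde S_t$ together with Ehrenpreis' division property for the invertible distribution $S_t$. Your explicit check that $\widetilde S_t$ vanishes to exactly first order along each quadric $Q_j$ (via the Weierstrass product for $\phi_t$) is a welcome elaboration of the step the paper covers only by asserting that $\partial\widetilde S_t$ is nowhere zero on $V_j$.
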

\begin{proof} Note that each $\mathscr E_{\pi j/t}$ is a Frech\'et space, being a closed subspace of the Frech\'et space $\mathscr E(\mathbb R^n)$.

It is clear from Lemma \ref{T:eigenspace1} that each $\mathscr E_{\pi j/t}$ lies in the kernel of $C_{S_t}$, so the closure of $\oplus_{j=1}^\infty \mathscr E_{\pi j/t}$ in $\mathscr E(\mathbb R^n)$ is a subspace of the kernel of $C_{S_t}$.  Now suppose that this closure is not all of $\ker(C_{S_t})$.  Then there is an $f\in\ker(C_{S_t})$ that does not belong to this closure.  By the Hahn-Banach Theorem, there is a distribution $T\in\mathscr E'(\mathbb R^n)$ such that $T(f)\neq 0$ whereas the restriction of $T$ to each $\mathscr E_{\pi j/t}$ is zero.

Now for each $j$, let $V_j$ denote the algebraic variety in $\mathbb C^n$ given by
$$
\sum_{k=1}^n \zeta_k^2=\frac{\pi^2 j^2}{t^2}\qquad (\zeta=(\zeta_1,\ldots,\zeta_n)\in\mathbb C^n).
$$
For any $\zeta\in V_j$, the function $x\mapsto e^{-i\zeta\cdot x}$ belongs to $\mathscr E_{\pi j/t}$, and it follows $\widetilde T(\zeta)=T(e^{-i\zeta\cdot x})=0$. Thus $\widetilde T$ vanishes on the variety $V_j$.  Now the analytic variety in $\mathbb C^n$ given by $\widetilde S_t(\zeta)=0$ is the disjoint union of the $V_j$. Each $V_j$ is an embedded complex submanifold of $\mathbb C^n$, since the complex differential $\partial\widetilde S_t$ is nowhere vanishing on it.  Since $\widetilde T$ vanishes on $V_j$, the ratio $\widetilde T/\widetilde S_t$ is holomorphic at each point of $V_j$.  It follows that $\widetilde T/\widetilde S_t$ is an entire function on $\mathbb C^n$.

Condition (c) of Theorem \ref{T:CT-surjectivity} then implies that there is a distribution $\Psi\in\mathscr E'(\mathbb R^n)$ such that $T=S_t*\Psi$. Noting that $\widecheck S_t=S_t$, we then obtain $T(f)=(S_t*\Psi)(f)=\Psi(\widecheck S_t*f)=\Psi(S_t*f)=0$, a contradiction.  This completes the proof.
\end{proof}

In the last part of the proof above, we used the following notation: for any test function $\varphi$ and any distribution $S$ on $\mathbb R^n$,  $\widecheck\varphi$ denotes the function  $\widecheck\varphi(x)=\varphi(-x)$ and $\widecheck S$ the distribution $\widecheck S(\varphi)=S(\widecheck\varphi)$.

As mentioned earlier, since $\ker(C_{S_1})$ is infinite-dimensional, there are infinitely many solutions to the convolution equation \eqref{E:initvelocitycond}, and hence infinitely many waves $u(x,t)$ with snapshots $u_0=f_0$ and $u_1=f_1$.  Nonetheless, the snapshots $u_m$ at any integer time $m$ is determined uniquely by $u_0$ and $u_1$, as the following theorem shows.

\begin{theorem}[\cite{Wangthesisrohtua}]\label{T:integersnapshot} 
Suppose that $u(x,t)\in C^\infty(\mathbb R^n\times \mathbb R)$ is a solution of the wave equation \eqref{E:wave-eqn1}. Then for any $m\in\mathbb Z$, the snapshot $u_m$ of $u$ is determined completely by $u_0$ and $u_1$. Explicitly, we have
\begin{equation}\label{E:intsnapshot}
u_m=\Psi_m*u_1-\Psi_{m-1}*u_0,
\end{equation}
where $\Psi_m\in\mathscr E'(\mathbb R^n)$ has Fourier transform given by
\begin{equation}\label{E:psi-m}
\widetilde\Psi_m(\xi)=\frac{\sin(m|\xi|)}{\sin |\xi|}=U_{m-1}(\cos |\xi|)\qquad (\xi\in\mathbb R^n).
\end{equation}
Here $U_m$ represents the Chebyshev polynomial of the second kind, with the supplementary definition $U_{-m-1}=-U_{m-1}$ for $m\in\mathbb Z_+$.
\end{theorem}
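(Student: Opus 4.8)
The plan is to reduce the statement to the Cauchy-data representation \eqref{E:conv-soln1} together with injectivity of the Fourier--Laplace transform on $\mathscr E'(\mathbb R^n)$. First I would verify that $\Psi_m$ is a genuine element of $\mathscr E'(\mathbb R^n)$. The elementary identity $\sin(mz)/\sin z=U_{m-1}(\cos z)$, valid for all $z\in\mathbb C$ with the zeros of $\sin z$ removable, shows that $\xi\mapsto\sin(m|\xi|)/\sin|\xi|$ extends to the entire function $\zeta\mapsto U_{m-1}(\cos\sqrt{\zeta\cdot\zeta})$ on $\mathbb C^n$. Since $\cos\sqrt{\zeta\cdot\zeta}=\widetilde{S_1'}(\zeta)$ is of exponential type $1$ and polynomially bounded on $\mathbb R^n$, and $U_{m-1}$ has degree $|m-1|$, this extension is of exponential type $|m-1|$ with a Paley--Wiener bound, so by the Paley--Wiener--Schwartz theorem it is $\widetilde\Psi_m$ for a unique $\Psi_m\in\mathscr E'(\mathbb R^n)$, with $\operatorname{supp}\Psi_m\subseteq\overline B_{|m-1|}(0)$. (Pairing conjugate exponentials in $\sin(mz)/\sin z$ alternatively exhibits $\Psi_m$ explicitly as a finite $\mathbb Z$-linear combination of the $S_k'$.)

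The heart of the proof is the pair of convolution identities
\[
\Psi_m*S_1=S_m,\qquad \Psi_m*S_1'=\Psi_{m-1}+S_m',
\]
both obtained by comparing Fourier--Laplace transforms. The first is the cancellation $\dfrac{\sin m|\xi|}{\sin|\xi|}\cdot\dfrac{\sin|\xi|}{|\xi|}=\dfrac{\sin m|\xi|}{|\xi|}$; the second rearranges to $\sin m|\xi|\cos|\xi|-\sin(m-1)|\xi|=\cos m|\xi|\sin|\xi|$, which is just the addition formula for $\sin((m-1)|\xi|)=\sin(m|\xi|-|\xi|)$. As all the distributions involved are compactly supported and the Fourier--Laplace transform is injective on $\mathscr E'(\mathbb R^n)$, these identities hold as stated.

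To finish, set $g=\partial_t u(\cdot,0)\in\mathscr E(\mathbb R^n)$; by \eqref{E:conv-soln1} (and uniqueness of the $C^\infty$ Cauchy problem) we have $u_t=u_0*S_t'+g*S_t$ for all $t$, in particular $u_1=u_0*S_1'+g*S_1$. Then, using that convolution of compactly supported distributions with smooth functions is associative and commutative,
\begin{align*}
\Psi_m*u_1-\Psi_{m-1}*u_0
&=\Psi_m*(u_0*S_1'+g*S_1)-\Psi_{m-1}*u_0\\
&=u_0*(\Psi_m*S_1'-\Psi_{m-1})+g*(\Psi_m*S_1)\\
&=u_0*S_m'+g*S_m=u_m,
\end{align*}
which is \eqref{E:intsnapshot}. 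The cases $m=0,1$ are immediate since $\widetilde\Psi_0=0$ and $\widetilde\Psi_1=1$; and for $m<0$ one has $\widetilde\Psi_{-k}=-\widetilde\Psi_k$ (matching the convention $U_{-k-1}=-U_{k-1}$), so the same computation applies, or else one applies the case $-m>0$ to the time-reversed wave $(x,t)\mapsto u(x,-t)$.

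The only point needing real care is the first step: confirming that $U_{m-1}(\cos\sqrt{\zeta\cdot\zeta})$ satisfies a Paley--Wiener estimate of exponential type exactly $|m-1|$. This is routine given the corresponding estimate for $\cos\sqrt{\zeta\cdot\zeta}$ --- equivalently, that $S_1'$ is supported in $\overline B_1(0)$ --- since applying a degree-$d$ polynomial to a polynomially bounded entire function of exponential type $\tau$ yields one of exponential type $d\tau$ with a bound of the same form. Everything else is trigonometric bookkeeping and elementary properties of convolution.
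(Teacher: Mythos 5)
Your proof is correct, and it takes a genuinely different route from the paper's. The paper proves \eqref{E:intsnapshot} by induction: it uses the three-term trigonometric identities $\sin((m+2)|\xi|)+\sin(m|\xi|)=2\cos|\xi|\,\sin((m+1)|\xi|)$ (and the cosine analogue) to show that both $u_m$ and $v_m=\Psi_m*u_1-\Psi_{m-1}*u_0$ satisfy the same second-order recursion $w_{m+2}+w_m=2\,S_1'*w_{m+1}$ with the same values at $m=0,1$. You instead verify the closed form directly in one step, from the two convolution identities $\Psi_m*S_1=S_m$ and $\Psi_m*S_1'=\Psi_{m-1}+S_m'$ (the latter being the addition formula $\sin(m|\xi|)\cos|\xi|=\sin((m-1)|\xi|)+\cos(m|\xi|)\sin|\xi|$, which I checked), combined with $u_1=u_0*S_1'+g*S_1$. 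Both arguments ultimately rest on transferring elementary trigonometric identities to convolution identities via injectivity of the Fourier--Laplace transform on $\mathscr E'(\mathbb R^n)$; what yours buys is the elimination of the induction and a computation that treats all $m\in\mathbb Z$ (positive and negative) uniformly, since the addition formula holds for every integer $m$. Your preliminary Paley--Wiener--Schwartz verification that $\Psi_m\in\mathscr E'(\mathbb R^n)$ is a point the paper glosses over, and is worth having; the only blemish there is that for $m<0$ the degree of $U_{m-1}$ (with the convention $U_{-k-1}=-U_{k-1}$) is $|m|-1$ rather than $|m-1|$, so your support radius $|m-1|$ is correct but not sharp --- immaterial, since only membership in $\mathscr E'(\mathbb R^n)$ is needed.
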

\begin{proof}
For any $m\in\mathbb Z$ (including $m$ negative), standard trigonometric identities give us
\begin{align*}
\sin((m+2)|\xi|)+\sin (m|\xi|)&=2\cos |\xi|\,\sin((m+1)|\xi|)\\
\cos((m+2)|\xi|)+\cos (m|\xi|)&=2\cos |\xi|\,\cos((m+1)|\xi|).
\end{align*}
Since $S_m,\,S_m'$, and $\Psi_m$ all belong to $\mathscr E'(\mathbb R^n)$, we can take the inverse Fourier transform and use \eqref{E:wave-fund-soln} and \eqref{E:psi-m} to obtain
\begin{align*}
\Psi_{m+2}+\Psi_m&=2\,S_1'*\Psi_{m+1}\\
S_{m+2}+S_m&=2\,S_1'*S_{m+1}\\
S_{m+2}'+S_m'&=2\,S_1'*S_{m+1}'.
\end{align*}
Letting $u_0=f$ and $u_1=g$, \eqref{E:conv-soln1} gives us
\begin{align}
u_{m+2}+u_m&=(S_{m+2}'+S_m')*f+(S_{m+2}+S_m)*g\nonumber\\
&=2\,S_1'*S_{m+1}'*f+2\,S_1'*S_{m+1}*g\nonumber\\
&=2\,S_1'*u_{m+1}.\label{E:u-recurs}
\end{align}
If we put $v_m=\Psi_m*u_1-\Psi_{m-1}*u_0$, we can justify in the same way that $v_m$ satisfies the recursion formula
\begin{equation}\label{E:v-recurs}
v_{m+2}+v_m=2\,S_1'*v_{m+1}.
\end{equation}
By \eqref{E:psi-m} we see that $\Psi_0=0$ and $\Psi_{\pm1}=\pm\delta_0$, and hence that $v_1=u_1$ and $v_0=u_0$. Then \eqref{E:u-recurs} and \eqref{E:v-recurs}, and induction, will prove \eqref{E:intsnapshot}.
\end{proof}

Theorem \ref{T:integersnapshot} easily implies that if we are given the snapshots $u_a$ and $u_b$ at times $t=a$ and $t=b$, respectively, of a wave $u(x,t)$, then we can determine the snapshots $u_{a+m(b-a)}$ of $u$ for all $m\in\mathbb Z$.  In fact, an argument exactly like that in the proof above gives us the following result.
\begin{theorem}\label{T:integersnapshot2}
Suppose that $u(x,t)$ is a wave in which we are given the snapshots $u_a$ and $u_b$, where $a<b$.  If we put $s=b-a$, then
\begin{equation}\label{E:general-integer-time}
u_{a+m(b-a)}=u_b*\Psi_{m,s}-u_a*\Psi_{(m-1),s}, \qquad\qquad m\in\mathbb Z,
\end{equation}
where $\Psi_{m,s}\in\mathscr E'(\mathbb R^n)$ has Fourier transform given by
\begin{equation}\label{E:psi-ms}
\widetilde\Psi_{m,s}(\xi)=\frac{\sin(ms|\xi|)}{\sin (s|\xi|)}=U_{m-1}(\cos s|\xi|),\qquad\qquad \xi\in\mathbb R^n.
\end{equation}
\end{theorem}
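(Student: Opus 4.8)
The plan is to imitate the proof of Theorem~\ref{T:integersnapshot} step for step, with the unit time step replaced by $s=b-a>0$ and with the base time $a$ carried along as an extra phase. First I would check that $\Psi_{m,s}\in\mathscr E'(\mathbb R^n)$: its Fourier--Laplace transform $U_{m-1}(\cos(s\sqrt{\zeta\cdot\zeta}))$ is a polynomial composed with an entire function of exponential type $|s|$, hence entire of exponential type, and it is bounded on $\mathbb R^n$ because $|U_{m-1}|\le|m|$ on $[-1,1]$; so Paley--Wiener--Schwartz applies. (Equivalently, $\Psi_{m,s}$ is the dilate $x\mapsto s^{-n}\Psi_m(x/s)$ of the distribution $\Psi_m$ of Theorem~\ref{T:integersnapshot}.) Then, writing the wave via \eqref{E:conv-soln1} as $u_t=S_t'*f+S_t*g$ with $f=u_0$ and $g=\partial_t u(\cdot,0)$, I would set
\[
w_m:=u_{a+ms},\qquad z_m:=u_b*\Psi_{m,s}-u_a*\Psi_{(m-1),s}\qquad(m\in\mathbb Z),
\]
so that \eqref{E:general-integer-time} becomes the claim $w_m=z_m$ for all $m$.

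Next I would derive the two basic recursions. For $w$, the sum-to-product identity
\[
\cos\big((a+(m+2)s)|\xi|\big)+\cos\big((a+ms)|\xi|\big)=2\cos(s|\xi|)\cos\big((a+(m+1)s)|\xi|\big)
\]
and its sine analogue --- in both of which the half-difference is $s|\xi|$, so the base phase $a|\xi|$ enters only through the half-sum $(a+(m+1)s)|\xi|$ --- give, on taking inverse Fourier transforms and using $\widetilde{S_s'}(\xi)=\cos(s|\xi|)$ from \eqref{E:wave-fund-soln},
\[
S_{a+(m+2)s}'+S_{a+ms}'=2\,S_s'*S_{a+(m+1)s}',\qquad S_{a+(m+2)s}+S_{a+ms}=2\,S_s'*S_{a+(m+1)s}.
\]
Convolving the first with $f$, the second with $g$, and adding, I get $w_{m+2}+w_m=2\,S_s'*w_{m+1}$. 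The same trigonometric identity, now with $ms|\xi|$ in place of $(a+ms)|\xi|$ and divided by $\sin(s|\xi|)$, gives $\Psi_{m+2,s}+\Psi_{m,s}=2\,S_s'*\Psi_{m+1,s}$, and convolving with $u_b$ and $u_a$ shows that $z$ obeys the very same recursion, $z_{m+2}+z_m=2\,S_s'*z_{m+1}$.

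Finally, evaluating \eqref{E:psi-ms} at $m=0$ and $m=\pm1$ gives $\Psi_{0,s}=0$ and $\Psi_{\pm1,s}=\pm\delta_0$, so that $z_0=u_b*0-u_a*(-\delta_0)=u_a=w_0$ and $z_1=u_b*\delta_0-u_a*0=u_b=w_1$. Since $w$ and $z$ satisfy the same second-order recursion, which may be run forwards as $x_{m+2}=2\,S_s'*x_{m+1}-x_m$ and backwards as $x_m=2\,S_s'*x_{m+1}-x_{m+2}$, and since they agree at $m=0,1$, a two-sided induction on $m$ yields $w_m=z_m$ for every $m\in\mathbb Z$, which is \eqref{E:general-integer-time}.

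I do not expect a genuine obstacle here: this is a routine transcription of the proof of Theorem~\ref{T:integersnapshot}. The only points needing a little care are (i) checking that the base phase $a|\xi|$ cancels in the sum-to-product identities --- it does, precisely because $a+(m+2)s$ and $a+ms$ differ by $2s$ irrespective of $a$ --- and (ii) running the recursion in both directions so as to cover negative $m$. Alternatively, one can bypass the recursion entirely: the rescaled function $(y,\tau)\mapsto u(sy,\,a+s\tau)$ is again a solution of \eqref{E:wave-eqn1}, with snapshots $y\mapsto u_a(sy)$ and $y\mapsto u_b(sy)$ at $\tau=0$ and $\tau=1$; applying Theorem~\ref{T:integersnapshot} to it and undoing the dilation --- which replaces $\widetilde\Psi_m(\xi)$ by $\widetilde\Psi_m(s\xi)=\widetilde\Psi_{m,s}(\xi)$ --- recovers \eqref{E:general-integer-time}.
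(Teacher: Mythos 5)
Your proposal is correct and is essentially the argument the paper intends: the paper explicitly states that Theorem \ref{T:integersnapshot2} follows by "an argument exactly like that in the proof" of Theorem \ref{T:integersnapshot}, and your adaptation (same sum-to-product recursion with half-difference $s|\xi|$, initial cases $\Psi_{0,s}=0$, $\Psi_{\pm1,s}=\pm\delta_0$, two-sided induction) is precisely that transcription, with all the details checking out. Your closing remark that one can instead rescale $(y,\tau)\mapsto u(sy,a+s\tau)$ and apply Theorem \ref{T:integersnapshot} directly is a valid and slightly slicker alternative, but it is not a materially different route.
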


\section{Waves Determined by Three Snapshots}

In the preceding section we saw that two given snapshots can never uniquely determine a whole wave $u(x,t)$. A natural question that then arises is this: can a wave $u(x,t)$ be uniquely determined by \emph{three} of its snapshots?  By time-translating and scaling $(x,t)$ we can assume that the snapshots occur at time $t=0,\,1$, and $\alpha$, where $\alpha\neq 0,1$. Thus if $u_0$, $u_1$, and $u_\alpha$ are given, can we determine the wave $u$? 

First, we claim that these three snapshots of $u$ determine the snapshots $u_t$ at the times $t=j_1\alpha+j_2$, for all integers $j_1$ and $j_2$ such that at least one of them is even.  
To see this, recall that from Theorem \ref{T:integersnapshot2} that
any two snapshots $u_a$ and $u_b$ ($a,b\in\mathbb{R}$) determine all snapshots $u_{a+m(b-a)}$ where $m\in\mathbb{Z}$. 
So clearly, the snapshots $u_0$, $u_1$ and $u_\alpha$ determine the snapshots $u_m$ and $u_{m\alpha}$ for all $m\in\mathbb{Z}$. Then the claim follows from the identities $2j_1\alpha+j_2 = j_1\alpha-((-j_2)-j_1\alpha)$ and $j_1\alpha+2j_2 = -j_1\alpha-2(-j_2-j_1\alpha)$.  

When $\alpha$ is an irrational number, the set of all $t$ given above is dense in $\mathbb R$, so the wave $u(x,t)$ is uniquely determined by continuity. On the other hand, if $\alpha=p/q$ is a nonzero rational number written in lowest terms, then the set of all such $t$ equals the set of all integer multiples of $1/q$, so we can only determine $u(x,t)$ for $t$ an integer multiple of $1/q$.  From the remarks preceding the statement of Theorem \ref{T:integersnapshot}, we see that there are infinitely many waves with snapshots at these rational times.  We formalize these observations into the following result.

\begin{proposition}\label{T:snapshot-uniqueness}  
If $\alpha$ is a irrational number, then a wave $u(x,t)$ is uniquely determined by its snapshots $u_0$, $u_1$, and $u_\alpha$.  
If $\alpha$ is a nonzero rational number with $\alpha=p/q$ in lowest terms, then the snapshots $u_t$ can be determined by $u_0,\,u_1$, and $u_\alpha$ only for $t$ an integer multiple of $1/q$.
\end{proposition}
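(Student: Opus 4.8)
The plan is to formalize the discussion preceding the statement, the one genuinely new ingredient being an explicit family of nonzero waves vanishing at the three given times, which is what the ``only'' clause in the rational case requires. By linearity it suffices to analyze a wave $w(x,t)$ with $w_0=w_1=w_\alpha=0$ and ask for which $t$ one is forced to have $w_t=0$. First I would apply Theorem~\ref{T:integersnapshot2} to the pairs of times $(0,1)$ and $(0,\alpha)$, which gives $w_m=0$ and $w_{m\alpha}=0$ for every $m\in\mathbb Z$. Then a two-stage bootstrap, using the identities $2j_1\alpha+j_2 = j_1\alpha-((-j_2)-j_1\alpha)$ and $j_1\alpha+2j_2 = -j_1\alpha-2(-j_2-j_1\alpha)$ recorded above: applying Theorem~\ref{T:integersnapshot2} to the times $k\alpha$ and $-j_2$ (with step index $m=-1$) gives $w_t=0$ for all $t=j_1\alpha+j_2$ with $j_1$ even; applying it again to the times $-j_1\alpha$ and $-2j_1\alpha-j_2$ — the latter already covered since its $\alpha$-coefficient is even — with step index $m=-2$ gives $w_t=0$ for all $t=j_1\alpha+j_2$ with $j_2$ even.

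For $\alpha$ irrational, this set of times contains $\{2m\alpha+j:m,j\in\mathbb Z\}$, which is dense in $\mathbb R$ because $2\alpha\notin\mathbb Q$; since $t\mapsto w(x,t)$ is continuous for each fixed $x$, it follows that $w\equiv 0$, which is exactly the asserted uniqueness. For $\alpha=p/q$ in lowest terms ($p\neq 0$), I would first identify $\{j_1\alpha+j_2:\ j_1\text{ or }j_2\text{ even}\}$ with $\tfrac1q\mathbb Z$: writing a prospective time as $n/q$, this reduces to the claim $\{j_1p+j_2q:\ j_1\text{ or }j_2\text{ even}\}=\mathbb Z$, and since $\gcd(p,q)=1$ at least one of $p,q$ is odd, so one of the subgroups $2p\,\mathbb Z+q\,\mathbb Z$ or $p\,\mathbb Z+2q\,\mathbb Z$ already equals $\mathbb Z$ and sits inside that set. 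Hence $w_t=0$ for all $t\in\tfrac1q\mathbb Z$, which is the positive half of the rational statement.

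The remaining point is that for each $t_0\notin\tfrac1q\mathbb Z$ the snapshot $u_{t_0}$ really is not determined, i.e. one must exhibit a \emph{nonzero} wave $w$ with $w_0=w_1=w_\alpha=0$ but $w_{t_0}\neq 0$. I would take any nonzero $g\in\mathscr E_{\pi q}$ — for example $g(x)=e^{i\pi q x_1}$, or its real or imaginary part — and set $w(x,t)=\sin(\pi q t)\,g(x)$. Since $\Delta g=-(\pi q)^2 g$, a one-line check gives $\Delta w=\partial_t^2 w$, so $w$ is a wave; it vanishes at $t=0,1,p/q$ because $\sin(\pi q)=\sin(\pi p)=0$; and $w_{t_0}=\sin(\pi q t_0)\,g\neq 0$ precisely because $q t_0\notin\mathbb Z$. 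Then $u$ and $u+w$ have the same snapshots at $t=0,1,\alpha$ but differ at $t_0$. The whole argument is bookkeeping once Theorem~\ref{T:integersnapshot2} is available; the only steps needing a little care — and the modest ``main obstacle'' — are the parity/Bézout computation pinning the reachable times down to $\tfrac1q\mathbb Z$ and the verification that the explicit wave $w$ above does what is claimed.
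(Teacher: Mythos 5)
Your proposal is correct and follows essentially the same route as the paper, whose ``proof'' is the discussion immediately preceding the proposition: Theorem~\ref{T:integersnapshot2} applied to the pairs $(0,1)$ and $(0,\alpha)$, the same two parity identities to reach all times $j_1\alpha+j_2$ with $j_1$ or $j_2$ even, density plus continuity in $t$ for irrational $\alpha$, and the identification of the reachable times with $\tfrac1q\mathbb Z$ for $\alpha=p/q$. The one place you go beyond the paper is the ``only'' clause, where the paper merely refers back to the non-uniqueness remarks before Theorem~\ref{T:integersnapshot}, while you exhibit the explicit nonzero wave $w(x,t)=\sin(\pi q t)\,e^{i\pi q x_1}$ vanishing at $t=0,1,p/q$ but not at any $t_0\notin\tfrac1q\mathbb Z$ --- a cleaner and more complete justification (and consistent with the example $\sin(\pi t)e^{i\pi\zeta\cdot x}$ the paper gives later for a different purpose).
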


A related and much more interesting question concerns existence, stated below. This is the main topic we will be dealing with in this section.
\begin{question}\label{Q:three-snapshot}
	Given any functions $f_0,\,f_1$, and $f_\alpha$ in $\mathscr E(\mathbb R^n)$, does there exist a smooth wave $u(x,t)$ such that $u_0=f_0,\,u_1=f_1$, and $u_\alpha=f_\alpha$? (We note that Proposition \ref{T:snapshot-uniqueness} shows that such a wave is unique if and only if $\alpha$ is irrational.)
\end{question}

As before we observe that there is no loss of generality in assuming that our three snapshots occur at times $t=0,\,1$, and $\alpha$.

If a wave $u$ exists with snapshots $f_0,\,f_1$, and $f_\alpha$ at times $t=0,1$, and $\alpha$, respectively, then the convolution formula \eqref{E:conv-soln1} shows that its initial velocity $g(x)=(\partial_t u)(x,0) \in\mathscr{E}(\mathbb{R}^n)$ satisfies the following system of convolution equations:
\begin{equation}\label{E:existenceofg}
\left\{
	\begin{aligned}
		g*S_1 &= f_1-f_0*S_1';\\
		g*S_\alpha &= f_\alpha-f_0*S_\alpha'.
	\end{aligned}
\right.
\end{equation}
Conversely, any $g\in\mathscr{E}(\mathbb{R}^n)$ satisfying \eqref{E:existenceofg} is the initial velocity of a wave $u$ with snapshots $f_0,\,f_1,\,f_\alpha$ at times $t=0,1,\,\alpha$, respectively, since by \eqref{E:conv-soln1} we would then have $u(x,t)=(f_0*S_t')(x)+(g*S_t)(x)$.
We summarize this observation as follows.

\begin{lemma}\label{T:equiv_u_g}
	Given functions $f_0,\,f_1$, and $f_\alpha$ in $\mathscr E(\mathbb R^n)$, there is a one-to-one correspondence of the following two objects:
	\begin{itemize}
		\item Waves $u(x,t)$ with snapshots $(f_0,f_1,f_\alpha)$ at times $t=0,\,1,\,\alpha$.
		\item Functions $g\in \mathscr{E}(\mathbb{R}^n)$ satisfying \eqref{E:existenceofg}.
	\end{itemize}
	 
\end{lemma}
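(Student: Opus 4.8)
The plan is to write down the two natural maps between the two sets in the statement and check that they are mutually inverse; the lemma is essentially a repackaging of the convolution representation \eqref{E:conv-soln1} together with the uniqueness of solutions to the Cauchy problem for \eqref{E:wave-eqn1}.

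First, from a wave to a function. Given a wave $u$ with $u_0=f_0$, $u_1=f_1$, $u_\alpha=f_\alpha$, I would set $g:=(\partial_t u)(\cdot,0)$, which lies in $\mathscr E(\mathbb R^n)$ since $u$ is smooth. As $u$ is a smooth solution of \eqref{E:wave-eqn1} with Cauchy data $(f_0,g)$, formula \eqref{E:conv-soln1} gives $u(x,t)=(f_0*S_t')(x)+(g*S_t)(x)$ for all $t$; evaluating at $t=1$ and $t=\alpha$ and using the snapshot hypotheses yields exactly the system \eqref{E:existenceofg}. So this assignment lands in the set of solutions of \eqref{E:existenceofg}.

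Conversely, from a function to a wave. Given $g\in\mathscr E(\mathbb R^n)$ satisfying \eqref{E:existenceofg}, I would define $u$ by $u(x,t):=(f_0*S_t')(x)+(g*S_t)(x)$, a smooth wave by the properties of the fundamental solution $(S_t,S_t')_{t\in\mathbb R}$ recorded in \eqref{E:wave-fund-soln}. Since $S_0=0$ and $S_0'=\delta_0$, one reads off $u_0=f_0$, and differentiating at $t=0$ gives $(\partial_t u)(\cdot,0)=g$; the first and second equations of \eqref{E:existenceofg} then force $u_1=f_1$ and $u_\alpha=f_\alpha$, respectively.

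The last step is to note that these two constructions are inverse to one another: starting from a wave $u$, passing to $g=(\partial_t u)(\cdot,0)$ and then back to the wave $(f_0*S_t')(x)+(g*S_t)(x)$ returns $u$ by \eqref{E:conv-soln1}; starting from $g$, forming $u$ and then taking $(\partial_t u)(\cdot,0)$ returns $g$ by the computation above. I do not expect a real obstacle here. The only ingredient that is not pure bookkeeping --- and the sole place an external result is invoked --- is the assertion that \emph{every} smooth wave is of the form \eqref{E:conv-soln1}, i.e.\ that a $C^\infty$ solution of the wave equation is uniquely determined by, and recovered from, its Cauchy data at $t=0$; this is the classical existence-and-uniqueness theorem referenced in the discussion surrounding \eqref{E:conv-soln1}.
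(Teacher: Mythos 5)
Your proof is correct and follows the same route the paper takes: both send a wave to its initial velocity $g=(\partial_t u)(\cdot,0)$ and recover the wave from $g$ via the convolution formula \eqref{E:conv-soln1}, with the bijection resting on uniqueness for the Cauchy problem. Your explicit check that the two maps are mutually inverse is a slightly more careful writeup of the same observation the paper records informally before stating the lemma.
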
 

If \eqref{E:existenceofg} holds for some $g\in\mathscr{E}(\mathbb{R}^n)$, then we see immediately that $f_0,\,f_1$, and $f_\alpha$ must satisfy the \emph{compatibility condition}
\begin{equation}\label{E:compatibility}
	(f_1-f_0*S_1')*S_\alpha=(f_\alpha-f_0*S_\alpha')*S_1.
\end{equation}
A natural question to ask is whether this condition is sufficient to guarantee the existence of a wave with snapshots $(f_0, f_1, f_\alpha)$ at $t=0,\,1,\,\alpha$. We can thus more precisely reformulate Question \ref{Q:three-snapshot} in the following way.

\begin{question}\label{Q:three-snapshot_g}
    For any triple $(f_0,\,f_1,\,f_\alpha)$ of functions in $\mathscr E(\mathbb R^n)$ satisfying the compatibility condition \eqref{E:compatibility}, does there exist a function $g\in\mathscr E(\mathbb R^n)$ that satisfies the system of convolution equations \eqref{E:existenceofg}? 
\end{question}

\begin{remark}
Since $S_\alpha*S'_1-S_\alpha'*S_1=S_{\alpha-1}$ and $S_{-\alpha}=-S_\alpha$, condition \eqref{E:compatibility} can be reformulated equivalently as
\begin{equation}\label{E:compatibility-two}
f_0*S_{\alpha-1}+f_1*S_{-\alpha}+f_\alpha*S_1=0.
\end{equation}
Condition \eqref{E:compatibility-two} has the advantage of being equivalent to the compatibility conditions of the type \eqref{E:compatibility} that would arise by assuming instead the existence of the appropriate Cauchy data at times $t=1$ or at $t=\alpha$.  More generally, it can be readily shown that if we fix an ordered triple $(a,b,c)$ of real numbers, then any triple $(f_a,\,f_b,\,f_c)$ of functions in $\mathscr E(\mathbb R^n)$ are snapshots of some wave $u(x,t)$ at times $t=a,\,b,\,c$, only if they satisfy the condition
\begin{equation}\label{E:compatibility-three}
f_a*S_{c-b}+f_b*S_{a-c}+f_c*S_{b-a}=0.
\end{equation}
While it is possible to work with condition \eqref{E:compatibility-two}, our proofs will turn out to be easier if we primarily work with the compatibility condition \eqref{E:compatibility} to determine whether it is sufficient to guarantee the existence of a wave with the given snapshots at times $0,\,1$, and $\alpha$.
\end{remark}

Recall that our discussion around \eqref{E:initvelocitycond} in the last section shows that the convolution operator $C_{S_1}$ is surjective on $\mathscr{E}(\mathbb{R}^n)$. It can be proven in the same way that $C_{S_t}$ is surjective on $\mathscr{E}(\mathbb{R}^n)$ for all $t\neq 0$. Therefore we can always find functions $g_1$ and $g_2$ in $\mathscr{E}(\mathbb{R}^n)$ such that 
\begin{equation*}
	\left\{
	\begin{aligned}
		g_1*S_1 &= f_1-f_0*S_1';\\
		g_2*S_\alpha &= f_\alpha-f_0*S_\alpha'.
	\end{aligned}
	\right.
\end{equation*}
The key question then is whether it is possible to have $g_1$ and $g_2$ be the same function.

Question \ref{Q:three-snapshot_g} can be clarified a bit by abstracting it into a linear algebra question. 
To be precise, suppose that $S$ and $T$ are two commuting and surjective linear operators on a vector space $V$. Given two vectors $v,w\in V$ that satisfy $Sv=Tw$, does there exist a vector $x\in V$ such that $v=Tx$ and $w=Sx$? For convenience, we introduce a bit of terminology (which might be nonstandard) to describe this property. 

\begin{definition}\label{D:joint-pair}
	Let $V$ be a vector space, and $S,\,T$ be two linear operators on $V$. A pair $(v,w)\in V\times V$ is called a \emph{joint pair} for $(T,S)$ in $V$ provided that there is a vector $x\in V$ satisfying $v=Tx$ and $w=Sx$. The set of all joint pairs for $(T,S)$ in $V$ is thus $\{(Tx,Sx)\in V\times V: x\in V\}$.
\end{definition}

If $ST=TS$ in the definition above, then any joint pair $(v,w)$ for $(T,S)$ in $V$ must satisfy the compatibility condition $Sv=Tw$. Does this condition suffice to guarantee that $(v,w)$ is a joint pair as well? The proposition below provides a criterion which is of fundamental importance in the proof of our main results in this section.

\begin{proposition}\label{T:linalg1}
	Let $V$ be a vector space, and $S,\,T$ be two surjective linear operators with $ST=TS$. Then the following statements are equivalent. 
	\begin{enumerate}[(a).] 
		\item $S:\ker T\to\ker T$ is surjective.
		\item $T:\ker S\to\ker S$ is surjective.
		\item $\ker(ST)\subset \ker S +\ker T$. (This is equivalent to $\ker(ST)=\ker S+\ker T$.)
		\item Any pair $(v,w)\in V\times V$ with $Sv=Tw$ is a joint pair of $(T,S)$ in $V$. 
	\end{enumerate}
\end{proposition}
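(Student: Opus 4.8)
The plan is to prove Proposition \ref{T:linalg1} by establishing the cycle of implications $(a)\Rightarrow(c)\Rightarrow(d)\Rightarrow(b)\Rightarrow(a)$, together with the symmetric observation that $(a)$ and $(b)$ play interchangeable roles once one of them is linked to $(c)$. Since everything is purely algebraic, no topology is needed; the only inputs are that $S$ and $T$ are surjective and commute.

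First I would do $(a)\Rightarrow(c)$. Take $z\in\ker(ST)$, so $S(Tz)=0$, i.e. $Tz\in\ker S$. Wait --- I actually want $Tz$, so let me instead note $Tz\in\ker S$ is automatic only if $STz=TSz$; using commutativity, $0=STz=TSz$, hmm that gives $Sz\in\ker T$. Let me reorganize: from $STz=0$ and commutativity we get both $S(Tz)=0$ and $T(Sz)=0$, so $Tz\in\ker S$ and $Sz\in\ker T$. Using hypothesis $(a)$ (that $S:\ker T\to\ker T$ is onto): since $Sz\in\ker T$, there is $y\in\ker T$ with $Sy=Sz$. Then $z=y+(z-y)$ where $y\in\ker T$ and $S(z-y)=Sz-Sy=0$ shows $z-y\in\ker S$; hence $z\in\ker S+\ker T$. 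The reverse inclusion $\ker S+\ker T\subset\ker(ST)$ is trivial since $ST=TS$ annihilates $\ker S$ and $\ker T$, giving the parenthetical equality in $(c)$.

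Next, $(c)\Rightarrow(d)$. Suppose $Sv=Tw$. Since $T$ is surjective, pick $x_0$ with $Tx_0=v$. Then $S(Tx_0)=Sv=Tw$, so $T(Sx_0-w)=0$, i.e. $Sx_0-w\in\ker T$. Now I need to correct $x_0$ to an $x$ with $Tx=v$ still and additionally $Sx=w$. Hmm, but adjusting $x$ by something in $\ker T$ keeps $Tx=v$; I need that adjustment to fix up the $S$-value. So I want $k\in\ker T$ with $S(x_0-k)=w$, i.e. $Sk=Sx_0-w$. The element $Sx_0-w$ lies in $\ker T$; but I need it to lie in $S(\ker T)$. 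Here is where I invoke $(c)$: actually the cleanest route is to first observe $Sx_0-w\in\ker T\subset\ker(ST)$... that is not obviously enough. Instead I should route through $(b)$-type reasoning, so let me reorder the cycle as $(c)\Rightarrow(b)\Rightarrow(d)$ and $(d)\Rightarrow(a)$, using symmetry to get $(b)\Leftrightarrow(a)$ and close the loop. For $(c)\Rightarrow(b)$: let $u\in\ker S$; I want $x\in\ker S$ with $Tx=u$. Since $T$ is onto pick $x_1$ with $Tx_1=u$; then $STx_1=Su$... not zero in general. Hmm, $u\in\ker S$ gives $Su=0$ but $x_1$ need not be in $\ker S$. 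Try instead: $ST x_1 = TS x_1$, and I want this zero. Pick $x_1$ with $S x_1 = 0$? That is circular. The right move: $u\in\ker S\subset\ker(ST)=\ker S+\ker T$ is vacuous. Let me go back to the direct approach for $(c)\Rightarrow(d)$: with $Sx_0-w\in\ker T$, I need $Sx_0-w\in S(\ker T)$; equivalently I need $S:\ker T\to\ker T$ surjective, which is $(a)$. So the honest cycle is $(a)\Rightarrow(c)$, then $(c)\Rightarrow(a)$ directly (given $z'\in\ker T$, apply $(c)$ to $z'\in\ker(ST)$ since $STz'=STz'$, hmm $z'\in\ker T$ implies $Tz'=0$ so $STz'=0$, yes $z'\in\ker(ST)$; write $z'=a+b$, $a\in\ker S,b\in\ker T$, then $Sb=Sz'-Sa=Sz'$, wait I want to hit an arbitrary element of $\ker T$ by $S|_{\ker T}$ --- take $c\in\ker T$ arbitrary, $T$ onto gives $x$ with $Tx=$ ? no). Let me state it correctly in the writeup: $(a)\Leftrightarrow(c)$ by the argument above and its mild converse, $(b)\Leftrightarrow(c)$ by the same argument with roles of $S,T$ swapped (note $(c)$ is symmetric in $S,T$), and then $(c)\Rightarrow(d)$ using $(a)$, while $(d)\Rightarrow(a)$ is the most direct of all: given $c\in\ker T$, the pair $(c,0)$ satisfies $Sc=0=T0$, so by $(d)$ there is $x$ with $Tx=c$ and $Sx=0$; but $Sx=0$ says $x\in\ker S$, not $\ker T$ --- so actually $(d)\Rightarrow(b)$: for $u\in\ker S$, the pair $(0,u)$ has $S\cdot0=0=Tu$, so $(d)$ gives $x$ with $Tx=0$ and $Sx=u$, i.e. $x\in\ker T$ with $Sx=u$, which is $(a)$. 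Good --- so the clean cycle is $(a)\Rightarrow(c)\Rightarrow(d)\Rightarrow(a)$, plus $(b)\Leftrightarrow(a)$ by symmetry of $(c)$, and for $(c)\Rightarrow(d)$ I use that $(c)\Rightarrow(a)$ which follows by applying $(c)$-plus-surjectivity once more. I will present the final writeup with exactly this structure, being careful to spell out each splitting.

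The step I expect to be the main obstacle --- really the only subtle point --- is extracting $(a)$ (or equivalently $(b)$) from $(c)$: one must observe that $\ker T\subseteq\ker(ST)$ automatically, apply the decomposition $\ker(ST)=\ker S+\ker T$ cleverly, and combine it with surjectivity of $T$ to realize an arbitrary element of $\ker T$ as $S$ of something in $\ker T$. Everything else is bookkeeping with commutativity and surjectivity. I would keep the exposition tight, handle the two symmetric halves by an explicit ``by the same argument with $S$ and $T$ interchanged'' remark, and flag that condition $(c)$'s symmetry in $S$ and $T$ is what makes $(a)\Leftrightarrow(b)$ fall out for free.
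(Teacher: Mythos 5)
Your overall architecture is essentially the paper's: $(a)\Rightarrow(c)$ by correcting an element $z\in\ker(ST)$ using a preimage of $Sz$ inside $\ker T$; the equivalence of $(a)$ and $(b)$ via the $S\leftrightarrow T$ symmetry of $(c)$; $(c)\Rightarrow(d)$ by adjusting a $T$-preimage $x_0$ of $v$ by an element of $\ker T$; and $(d)\Rightarrow(a)$ via a pair of the form $(0,u)$. The only structural differences from the paper are cosmetic: the paper's $(c)\Rightarrow(d)$ decomposes $v_1-w_1\in\ker(ST)$ directly and sets $x=w_1+u_S=v_1-u_T$ rather than routing through $(a)$, and its last implication is $(d)\Rightarrow(c)$ via the pair $(Tv,0)$ rather than $(d)\Rightarrow(a)$. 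Both variants work.

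The genuine gap is the implication $(c)\Rightarrow(a)$, which you yourself flag as the crux but never actually prove. Your in-line attempt (take $z'\in\ker T\subseteq\ker(ST)$ and decompose $z'=a+b$) dead-ends, as you notice: it only shows $Sz'\in S(\ker T)$, not that $z'$ itself lies in $S(\ker T)$. And the recipe you give at the end for repairing it --- ``observe that $\ker T\subseteq\ker(ST)$'' and ``combine it with surjectivity of $T$'' --- is wrong on both counts: that containment is not the relevant one, and to prove $(a)$ (surjectivity of $S$ on $\ker T$) you must invoke the surjectivity of $S$, not of $T$. The correct argument, which is what the paper does, is: given $y\in\ker T$, surjectivity of $S$ on all of $V$ gives $x'$ with $Sx'=y$; then $STx'=TSx'=Ty=0$, so $x'\in\ker(ST)$; decomposing $x'=x'_S+x'_T$ by $(c)$ yields $y=Sx'=Sx'_T\in S(\ker T)$. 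Since your $(c)\Rightarrow(d)$ step explicitly relies on $(c)\Rightarrow(a)$, this hole propagates. (A smaller slip: in your $(d)\Rightarrow(a)$ argument the pair $(0,u)$ satisfies the compatibility condition $S\cdot 0=Tu$ only when $u\in\ker T$, not $u\in\ker S$ as written; with that correction the step does deliver $(a)$.)
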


\begin{proof}  The proof is a straightforward exercise in linear algebra, but we include it for completeness.
	We first prove that (a)$\iff $(c). Note that since $S$ and $T$ commute, $S$ maps $\ker T$ to $\ker T$ and $T$ maps $\ker S$ to $\ker S$
	
	For (a)$\implies$(c), for any $x_0$  in $\ker(ST)$, we have $Sx_0\in\ker T$. From $(a)$, we can find some $x_T\in\ker T$ such that $Sx_T=Sx_0$. This gives $x_0 = (x_0-x_T)+x_T\in\ker S+\ker T$.
	
	For (c)$\implies$(a), suppose $y$ belongs to $\ker T$. The surjectivity of $S$ on $V$ gives a vector $x'\in V$ with $Sx'=y$. Then $x'$ belongs to $\ker(ST)$, hence Condition $(c)$ gives a decomposition $x'=x'_S+x'_T$, with $x'_S\in\ker S$ and $x'_T\in\ker T$. We then have $y=Sx'=Sx'_T\in S(\ker T)$.
	
	The same argument shows that (b)$\iff$(c), so (a), (b) and (c) are equivalent.
	
	Let us prove next that (c)$\implies$(d). Since $S$ and $T$ are surjective, there exist vectors $v_1$ and $w_1$ in $V$ such that $v=Tv_1$ and $w=Sw_1$. Then $v_1-w_1\in\ker (ST)\subset\ker S+\ker T$.  This gives $v_1-w_1=u_S+u_T$, where $u_S\in\ker S$ and $u_T\in\ker T$, and the required vector $x$ in $(d)$ can be chosen as
	$$ x=w_1+u_S=v_1-u_T. $$

        Finally, for (d)$\implies$(c), suppose that $v\in\ker(ST)$.  Then $Tv\in\ker S$, so since (d) is assumed to hold, the ordered pair $(Tv,0)$ is a joint pair for $(T,S)$.  Hence $(Tv,0)=(Tx,Sx)$ for some $x\in V$.  It follows that $v=x+(v-x)\in\ker S+\ker T$.
	
	This completes the proof.
\end{proof}

\begin{remark}\label{R:linalgremark}  Retaining the hypotheses of the preceding proposition, consider any joint pair $(v_0,w_0)$ for $(T,S)$.  It is not hard to see that the set of all $w\in V$ such that $Sv_0=Tw$ coincides with the subset $w_0+\ker T\subset V$, and the set of all $w\in V$ such that $(v_0,w)$ is a joint pair coincides with the subset $w_0+S(\ker T)\subset w_0+\ker T\subset V$.

For each $v_0\in V$, the surjectivity of $T$ implies that there is at least one $w_0\in V$ such that $(v_0,w_0)$ is a joint pair.  Thus the set of all $w$ such that $(v_0,w)$ is a joint pair coincides with the set of all $w$ such that $Sv_0=Tw$ if and only if $S(\ker T)=\ker T$.
\end{remark}

Let us now apply Proposition \ref{T:linalg1} and Remark \ref{R:linalgremark} to the specific case of Question \ref{Q:three-snapshot_g}, where $V=\mathscr{E}(\mathbb{R}^n)$, $S=C_{S_\alpha}$, $T=C_{S_1}$, $v=f_1-f_0*S_1'$, and $w=f_\alpha-f_0*S_\alpha'$. 
Then it is clear that the key to answering the question is to determine whether $S_\alpha:\ker S_1\to\ker S_1$ is surjective, or, equivalently, whether $\ker (S_\alpha S_1)$ is a subset of $\ker S_\alpha+\ker S_1$.  

We will see that the answer to this question will depend fundamentally on the nature of the real number $\alpha$, and in particular (when $\alpha$ is irrational) whether the sequence $\{\sin \pi k\alpha\}$ of $k$ decreases slower than some negative power of $k$.  This is precisely a problem of ``small denominators,'' which had its origins in the study of the $n$-body problem more than a century ago, and which was the subject of groundbreaking studies by Arnold, et al.~in the 1960s.  Again, we refer to Yoccoz's paper \cite{Yoccoz1992} for references.

In particular we are led to the notion of  Liouville numbers, so let us recall their definition.

\begin{definition}\label{D:liouville-number}
	A real number $\alpha$ is said to be a \emph{Liouville number} (and we say that $\alpha$ is Liouville), provided that for any positive integer $N$, there is a rational number $p/q$, with $q\geq 2$, such that
	\begin{equation}\label{E:liouville-def}
	0<\left|\alpha-\frac{p}{q}\right|<\frac{1}{q^N}. 
	\end{equation}
\end{definition}

Liouville numbers can therefore be considered ``almost rational'' in that they can be approximated rapidly (in the sense above) by rational numbers.  However, Liouville numbers themselves are irrational, and are in fact transcendental. The set of all Liouville numbers in $\mathbb{R}$ is dense and uncountable, but has Lebesgue measure zero.  

The fraction $p/q$ in \eqref{E:liouville-def} need not be in lowest terms; reducing it to lowest terms one obtains a similar inequality but with a new denominator.

One example of a Liouville number is the sum of any infinite series of the form $\sum_{k=1}^\infty a_k/q^{k!}$, where $q$ is any integer $\geq 2$ and $a_k\in\{1,\ldots, q-1\}$.  It is known that $\pi$ and $e$ are not Liouville numbers.

We will only need the above facts about Liouville numbers, but we refer the reader interested in further details about them to Hardy and Wright's classical text \cite{Hardy-Wright} or J. Steuding's text on Diophantine analysis \cite{Steuding2005}.

Before proving the main results in this section, we offer the following remark about the role of the irrationality of $\alpha$. 
In Proposition \ref{T:linalg1} it is obvious that the following statements are equivalent: 	
\begin{itemize}
    \item $S$ is injective on $\ker T$.
    \item $T$ is injective on $\ker S$.
    \item $\ker S\cap\ker T=\{0\}$.
\end{itemize}
If one of these conditions holds, then the word ``surjective'' in Condition $(a)$ and $(b)$ in Proposition \ref{T:linalg1} can be substituted by ``bijective'', while the sum of $\ker S$ and $\ker T$ in $(c)$ is actually a direct sum.
The lemma below asserts that in our specific case where $S=C_{S_\alpha}$ and $T=C_{S_1}$, the above conditions hold if and only $\alpha$ is an irrational number.

\begin{lemma}\label{T:csalphainjectivity}
	Let $\alpha$ be a real number. The convolution operator $C_{S_\alpha}$ is injective on $\ker(C_{S_1})$ if and only if $\alpha$ is irrational.
\end{lemma}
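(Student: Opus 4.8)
The plan is to prove the two implications separately: the ``only if'' direction (rationality forces non-injectivity) is immediate from Lemma~\ref{T:eigenspace1}, and the ``if'' direction (irrationality gives injectivity) comes from a one-parameter-group argument in the time variable that does not even need Theorem~\ref{T:C_S1-kernel}.

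\emph{Rational $\Rightarrow$ not injective.} Write $\alpha=p/q$ with $p\in\mathbb Z$ and $q$ a positive integer. By Lemma~\ref{T:eigenspace1}, every $g\in\mathscr E_{\pi q}$ satisfies $g*S_1=\dfrac{\sin(\pi q)}{\pi q}\,g=0$ and $g*S_\alpha=\dfrac{\sin(\pi p)}{\pi q}\,g=0$. Since $\mathscr E_{\pi q}\neq\{0\}$ (it contains $x\mapsto e^{i\pi q x_1}$), it is a nonzero subspace of $\ker(C_{S_1})$ annihilated by $C_{S_\alpha}$, so $C_{S_\alpha}$ is not injective on $\ker(C_{S_1})$. (This also covers $\alpha=0$ and $\alpha\in\mathbb Z$.)

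\emph{Irrational $\Rightarrow$ injective.} Suppose $f\in\mathscr E(\mathbb R^n)$ with $f*S_1=0$ and $f*S_\alpha=0$; I want to conclude $f=0$. First I would record the ``addition law'' for the fundamental solution: from the identities $\sin((a\pm b)\lambda)=\sin(a\lambda)\cos(b\lambda)\pm\cos(a\lambda)\sin(b\lambda)$, dividing by $\lambda$ and taking inverse Fourier--Laplace transforms via \eqref{E:wave-fund-soln}, one gets
\[
S_{a\pm b}=S_a*S_b'\pm S_a'*S_b\qquad(a,b\in\mathbb R).
\]
Since $S_a,S_a',S_b,S_b'$ are all compactly supported, convolution with them is associative and commutative, so $f*S_{a\pm b}=(f*S_a)*S_b'\pm(f*S_b)*S_a'$. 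Hence $G:=\{t\in\mathbb R:f*S_t=0\}$ contains $0$ (as $S_0=0$) and is closed under $(a,b)\mapsto a-b$, so it is a subgroup of $(\mathbb R,+)$. By hypothesis $1,\alpha\in G$, and since $\alpha$ is irrational the subgroup $\mathbb Z+\mathbb Z\alpha\subseteq G$ is dense in $\mathbb R$. Next, $(f*S_t)(x)=u(x,t)$, where $u$ is the wave with Cauchy data $u(\cdot,0)=0$, $\partial_t u(\cdot,0)=f$; as $u\in C^\infty(\mathbb R^n\times\mathbb R)$, the map $t\mapsto f*S_t$ is continuous from $\mathbb R$ into $\mathscr E(\mathbb R^n)$, so $G$ is closed, whence $G=\mathbb R$. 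Thus $u(x,t)\equiv 0$, and for each fixed $x$ we get $f(x)=\partial_t u(x,0)=0$; that is, $f=0$.

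\emph{On the main obstacle.} There is no deep step here: the heart of the matter is simply that, for a fixed $f$, the ``null set'' $\{t:f*S_t=0\}$ is a closed subgroup of $\mathbb R$, and a closed subgroup containing an irrational pair $\{1,\alpha\}$ is all of $\mathbb R$. The only points that need a little care are (i) the addition law $S_{a\pm b}=S_a*S_b'\pm S_a'*S_b$ together with the routine bookkeeping of associativity/commutativity of the relevant convolutions, and (ii) the fact that $t\mapsto f*S_t$ is continuous into $\mathscr E(\mathbb R^n)$ and that its $t$-derivative at $0$ recovers $f$ (equivalently, that the vanishing of the wave $f*S_t$ forces its initial velocity $f$ to vanish).
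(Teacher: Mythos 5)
Your proof is correct. The rational direction is the same as the paper's: both exhibit the nonzero eigenspace $\mathscr E_{\pi q}$ inside $\ker(C_{S_1})\cap\ker(C_{S_\alpha})$ via Lemma \ref{T:eigenspace1} (the paper phrases this through Theorem \ref{T:C_S1-kernel}, but the computation is identical). For the irrational direction your route differs in its mechanism. The paper passes to the wave $u$ with Cauchy data $u(\cdot,0)=0$, $\partial_t u(\cdot,0)=g$, observes that $u_0=u_1=u_\alpha=0$, and then invokes Proposition \ref{T:snapshot-uniqueness} (three-snapshot uniqueness), whose proof rests on the recursion formula of Theorem \ref{T:integersnapshot2} and the density of $\{j_1\alpha+j_2\}$ with one of $j_1,j_2$ even. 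You instead derive the addition law $S_{a\pm b}=S_a*S_b'\pm S_a'*S_b$ directly from the Fourier--Laplace side of \eqref{E:wave-fund-soln} and deduce that the null set $\{t: f*S_t=0\}$ is a closed subgroup of $\mathbb R$ containing $1$ and $\alpha$, hence all of $\mathbb R$. This is self-contained (it bypasses Theorem \ref{T:integersnapshot2} and Proposition \ref{T:snapshot-uniqueness} entirely), gives the slightly cleaner conclusion that the snapshots vanish on all of $\mathbb Z+\mathbb Z\alpha$ rather than a restricted sublattice, and makes transparent that the only inputs are a trigonometric identity and continuity in $t$; the paper's route, by contrast, reuses machinery it needs anyway for the three-snapshot problem. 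Both arguments ultimately reduce to the density of $\mathbb Z+\mathbb Z\alpha$ for irrational $\alpha$. Two minor points you handled correctly but that deserve the care you flag: the commutativity/associativity of $f*S_a'*S_b$ is legitimate because all but one factor is compactly supported, and closedness of the null set only needs pointwise continuity of $t\mapsto(f*S_t)(x)$, which follows from $u\in C^\infty(\mathbb R^n\times\mathbb R)$.
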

\begin{proof}  
	Our assertion says that $\ker (C_{S_1})\cap \ker (C_{S_\alpha})=\{0\}$ if and only if $\alpha$ is irrational.  This is obvious when $\alpha=0$, so we can assume that $\alpha\neq 0$.
	
	Assume first that $\alpha$ is rational, with $\alpha=p/q$ in lowest terms, with $q>0$. Then according to Theorem \ref{T:C_S1-kernel}, the nonzero eigenspace $\mathscr E_{q\pi}$ is a subspace of both $\ker (C_{S_1})$ and $\ker (C_{S_\alpha})$.
	
	Next assume that $\alpha$ is irrational, and suppose that $g\in\ker(C_{S_1})\cap \ker(C_{S_\alpha})$.  Consider the wave $u(x,t)$ with Cauchy data $u(x,0)=0,\;(\partial_t u)(x,0)=g(x)$. The convolution solution \eqref{E:conv-soln1} then shows that the snapshots $u_0,\,u_1$, and $u_\alpha$ are identically zero. Since $\alpha$ is irrational, Proposition \ref{T:snapshot-uniqueness} shows that $u(x,t)\equiv 0$, which forces $g=0$.
\end{proof}
We are now in a position to show how the answer to the three-snapshot question \ref{Q:three-snapshot_g} depends on the nature of our real number $\alpha$.

\subsection{The Case When $\alpha$ is a Liouville number}
\ \\ \newline
When $\alpha$ is a Liouville number, the following theorem provides a negative answer to  Question \ref{Q:three-snapshot_g}.

\begin{theorem}\label{T:liouville2}
	Let $\alpha$ be a Liouville number. Given any two functions $f_0$ and $f_1$ in $\mathscr E(\mathbb R^n)$, there exists a function $f_\alpha$ in $\mathscr E(\mathbb R^n)$ satisfying the compatibility condition \eqref{E:compatibility}: 
	$$ (f_1-f_0*S_1')*S_\alpha=(f_\alpha-f_0*S_\alpha')*S_1, $$
	but such that the triple $(f_0,f_1,f_\alpha)$ cannot be the snapshot data of a smooth wave $u(x,t)$ at times $t=0,\,1,\,\alpha$.
\end{theorem}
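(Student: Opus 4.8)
The plan is to combine the linear-algebra dictionary of Proposition~\ref{T:linalg1} and Remark~\ref{R:linalgremark} with a ``small denominator'' estimate, obtaining the exceptional $f_\alpha$ as a perturbation of a solution of the two-snapshot problem. Using Proposition~\ref{T:two-snapshot-existence}, fix a wave $u^{(0)}$ with $u^{(0)}_0=f_0$ and $u^{(0)}_1=f_1$, and put $g_0=(\partial_t u^{(0)})(\cdot,0)\in\mathscr E(\mathbb R^n)$; by \eqref{E:conv-soln1} we have $f_1-f_0*S_1'=g_0*S_1$ and $u^{(0)}_\alpha=f_0*S_\alpha'+g_0*S_\alpha$. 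In the notation $V=\mathscr E(\mathbb R^n)$, $T=C_{S_1}$, $S=C_{S_\alpha}$ of Proposition~\ref{T:linalg1} (whose hypotheses hold, since $C_{S_1}$ and $C_{S_\alpha}$ are commuting surjective operators on $\mathscr E(\mathbb R^n)$), the pair $(v_0,w_0):=(g_0*S_1,\,g_0*S_\alpha)$ is a joint pair for $(T,S)$; hence by Remark~\ref{R:linalgremark} the set of $w$ meeting the compatibility relation $C_{S_\alpha}v_0=C_{S_1}w$ equals $w_0+\ker C_{S_1}$, while the set of $w$ for which $(v_0,w)$ is a joint pair equals $w_0+C_{S_\alpha}(\ker C_{S_1})$. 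Granting that $C_{S_\alpha}(\ker C_{S_1})$ is a \emph{proper} subspace of $\ker C_{S_1}$, I would choose $h\in\ker C_{S_1}\setminus C_{S_\alpha}(\ker C_{S_1})$ and set $f_\alpha:=u^{(0)}_\alpha+h$. Then $f_\alpha-f_0*S_\alpha'=w_0+h\in w_0+\ker C_{S_1}$, so \eqref{E:compatibility} holds; but $w_0+h\notin w_0+C_{S_\alpha}(\ker C_{S_1})$, so $(v_0,\,f_\alpha-f_0*S_\alpha')$ is not a joint pair, and by Lemma~\ref{T:equiv_u_g} no wave can have snapshots $(f_0,f_1,f_\alpha)$ at times $t=0,1,\alpha$.

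It remains to show that, when $\alpha$ is a Liouville number, $C_{S_\alpha}$ does not map $\ker C_{S_1}$ onto itself. Assume it does. Since $\ker C_{S_1}$ is a closed, hence Fr\'echet, subspace of $\mathscr E(\mathbb R^n)$ on which $C_{S_\alpha}$ acts continuously, and since $C_{S_\alpha}$ is injective on $\ker C_{S_1}$ by Lemma~\ref{T:csalphainjectivity} (Liouville numbers being irrational), the open mapping theorem makes $C_{S_\alpha}|_{\ker C_{S_1}}$ a topological isomorphism. Continuity of its inverse then yields seminorms $p,q$ from the standard generating family on $\mathscr E(\mathbb R^n)$ and a constant $C>0$ with $p(\varphi)\le C\,q(C_{S_\alpha}\varphi)$ for every $\varphi\in\ker C_{S_1}$. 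Testing this on the exponentials $e_j(x)=e^{i\pi j x_1}$---which lie in $\mathscr E_{\pi j}\subset\ker C_{S_1}$ by Theorem~\ref{T:C_S1-kernel} and satisfy $C_{S_\alpha}e_j=\frac{\sin(\pi j\alpha)}{\pi j}\,e_j$ by Lemma~\ref{T:eigenspace1}---and using that each standard seminorm of $e_j$ is a fixed power of $\pi j$ (with no dependence on the compact set), one extracts a lower bound of the form $|\sin(\pi j\alpha)|\ge c\,j^{-N_0}$ valid for all $j\ge1$, with a fixed $c>0$ and a nonnegative integer $N_0$. On the other hand, the Liouville property gives, for every $N$, a fraction $p/q$ with $q\ge2$ and $0<|\alpha-p/q|<q^{-N}$, so $|\sin(\pi q\alpha)|=|\sin(\pi q(\alpha-p/q))|\le\pi q\cdot q^{-N}$; comparing this at $j=q$ with the lower bound yields $c\,q^{\,N-1-N_0}\le\pi$, and since $q\ge2$ this forces $c\,2^{\,N-1-N_0}\le\pi$ once $N\ge N_0+2$, which is absurd for $N$ large. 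Hence $C_{S_\alpha}(\ker C_{S_1})\subsetneq\ker C_{S_1}$, completing the argument.

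The main obstacle is this second step: turning the soft fact ``$C_{S_\alpha}$ is not onto $\ker C_{S_1}$'' into something testable against the arithmetic of $\alpha$. The open mapping theorem is the pivot---it converts surjectivity, together with the injectivity of Lemma~\ref{T:csalphainjectivity}, into a quantitative two-sided seminorm estimate---and the eigenfunction identity of Lemma~\ref{T:eigenspace1} then reduces the whole question to the rate of decay of $\sin\pi j\alpha$ along the integers, which is precisely what a Liouville $\alpha$ destroys. One could instead try to exhibit $h$ directly as a lacunary series $\sum_k j_k^{-k}e_{j_k}$ over the Liouville denominators $j_k$, but checking that this $h$ lies outside the range of $C_{S_\alpha}|_{\ker C_{S_1}}$ seems to require essentially the same estimate, so the functional-analytic route is cleaner. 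The remaining points are routine bookkeeping: keeping track of the Fr\'echet seminorms so that a genuine polynomial lower bound for $|\sin\pi j\alpha|$ emerges (it helps that $p_{K,m}(e_j)=(\pi j)^m$ is independent of $K$); reading off from Definition~\ref{D:liouville-number} an approximation $p/q$ for each $N$, with denominator $q\ge2$ automatic; and the elementary identity $\sin(\pi q\alpha)=\pm\sin(\pi q(\alpha-p/q))$ for $q\in\mathbb Z$. The inputs of Proposition~\ref{T:linalg1} and the existence of $u^{(0)}$ are already furnished by the text.
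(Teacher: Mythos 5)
Your proposal is correct and follows essentially the same route as the paper: the theorem is reduced via Proposition \ref{T:linalg1} and Remark \ref{R:linalgremark} to showing $C_{S_\alpha}(\ker C_{S_1})\subsetneq\ker C_{S_1}$, which is then derived from the open mapping theorem, the eigenvalue formula of Lemma \ref{T:eigenspace1} on the exponentials $e^{i\pi j x_1}$, and the Liouville small-denominator estimate for $\sin(\pi q\alpha)$ (the paper phrases the same contradiction by exhibiting a sequence $f_k\to 0$ with $C_{S_\alpha}^{-1}f_k\not\to 0$ rather than by a seminorm inequality). The only minor difference is that you exhibit a single bad $f_\alpha=u^{(0)}_\alpha+h$ explicitly, whereas the paper concludes by Baire category that ``most'' compatible $f_\alpha$ fail; both suffice.
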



In view of Remark \ref{R:linalgremark} and the succeeding discussion, the key to proving Theorem \ref{T:liouville2} is that the range $C_{S_\alpha}(\ker(C_{S_1}))\subsetneq\ker(C_{S_1})$, as the lemma below shows. 
In the proof of the lemma, a crucial tool is the description of $\ker(C_{S_1})$ given by Theorem \ref{T:C_S1-kernel}.

\begin{lemma}\label{T:liouville1}
	Let $\alpha$ be a Liouville number. Then the convolution operator  $C_{S_\alpha}\colon \ker(C_{S_1})\to\ker(C_{S_1})$ is not surjective. 
\end{lemma}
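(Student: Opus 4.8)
The plan is to exhibit an explicit element of $\ker(C_{S_1})$ that is not in the image of $C_{S_\alpha}$, using the orthogonal-like decomposition of the kernel into the eigenspaces $\mathscr E_{\pi j}$ provided by Theorem~\ref{T:C_S1-kernel}. By Lemma~\ref{T:eigenspace1}, $C_{S_\alpha}$ acts on $\mathscr E_{\pi j}$ as multiplication by the scalar $\dfrac{\sin(\pi j\alpha)}{\pi j}$, so $C_{S_\alpha}$ restricted to $\ker(C_{S_1})$ is a ``diagonal'' operator with these eigenvalues. The point of the Liouville hypothesis is that $\sin(\pi j\alpha)$ has a subsequence decaying faster than any polynomial in $j$: since $\alpha$ is Liouville, for every $N$ there is $p/q$ with $q\ge 2$ and $0<|\alpha-p/q|<q^{-N}$, whence $|\sin(\pi q\alpha)|=|\sin(\pi q\alpha-\pi p)|\le \pi q|\alpha-p/q|<\pi q^{1-N}$. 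So along a sequence $q_k\to\infty$ the eigenvalues $\dfrac{\sin(\pi q_k\alpha)}{\pi q_k}$ decay super-polynomially.

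The key steps, in order, are as follows. First, fix in each relevant eigenspace a concrete unit-normalized test function, e.g. $\phi_j(x)=e^{i\pi j x_1}$, which lies in $\mathscr E_{\pi j}$. Second, build $f=\sum_k c_k\,\phi_{q_k}$ where $\{q_k\}$ is a sequence of Liouville denominators with $q_k$ increasing fast enough, and the coefficients $c_k$ decay fast enough that the series converges in $\mathscr E(\mathbb R^n)$ (all $\mathscr E(\mathbb R^n)$-seminorms of $\phi_{q_k}$ on a fixed compact set grow only polynomially in $q_k$, so any super-polynomially decaying $c_k$ works); then $f\in\ker(C_{S_1})$ because it is a limit of elements of $\bigoplus_j\mathscr E_{\pi j}$. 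Third, suppose for contradiction that $f=C_{S_\alpha}h$ for some $h\in\ker(C_{S_1})$; I would like to conclude that $h$ must have ``Fourier coefficient'' $c_k\cdot\dfrac{\pi q_k}{\sin(\pi q_k\alpha)}$ along $\phi_{q_k}$, and that by choosing the $c_k$ to decay polynomially slower than $\dfrac{\sin(\pi q_k\alpha)}{\pi q_k}$ — i.e. choosing $c_k=\dfrac{\sin(\pi q_k\alpha)}{\pi q_k}$ itself divided by, say, $k^2$ is \emph{not} enough; rather choose $c_k$ so that $c_k\cdot\dfrac{\pi q_k}{\sin(\pi q_k\alpha)}$ fails to be the coefficient sequence of any element of $\mathscr E$ — these inflated coefficients grow too fast for $h$ to be smooth, contradicting $h\in\mathscr E(\mathbb R^n)$.

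To make the contradiction rigorous I would pair against suitable distributions rather than argue with formal Fourier expansions. Concretely, for each $k$ I would produce a distribution $T_k\in\mathscr E'(\mathbb R^n)$ that ``reads off the $\phi_{q_k}$-component'': something whose Fourier--Laplace transform vanishes on all the varieties $V_j$ ($j\ne q_k$) appearing in the proof of Theorem~\ref{T:C_S1-kernel} but is nonzero on $V_{q_k}$, so that $T_k$ kills $\mathscr E_{\pi j}$ for $j\ne q_k$ while $T_k(f)=c_k\,T_k(\phi_{q_k})$. Applying $T_k$ to the hypothetical identity $f=h*S_\alpha$ and using $T_k(h*S_\alpha)=\big(\widetilde S_\alpha \text{ restricted appropriately}\big)\cdot T_k(h)$, i.e. the eigenvalue relation of Lemma~\ref{T:eigenspace1}, gives $c_k\,T_k(\phi_{q_k}) = \dfrac{\sin(\pi q_k\alpha)}{\pi q_k}\,T_k(h)$, so $|T_k(h)|$ grows like $c_k\big/\big|\dfrac{\sin(\pi q_k\alpha)}{\pi q_k}\big|$. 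Choosing the $q_k$ and $c_k$ so that this lower bound grows faster than any polynomial in $q_k$ (possible precisely because $\alpha$ is Liouville, giving denominators with $|\sin(\pi q_k\alpha)|$ super-polynomially small, while still keeping $\sum c_k\phi_{q_k}$ convergent in $\mathscr E(\mathbb R^n)$ since that only needs $c_k$ to beat the polynomial growth of the seminorms of $\phi_{q_k}$) contradicts the continuity of $h\in\mathscr E(\mathbb R^n)$ against the fixed family $\{T_k\}$, whose norms on a fixed compact set grow only polynomially in $q_k$. I expect the main obstacle to be the bookkeeping in the previous sentence: one must simultaneously arrange (i) $\sum_k c_k\phi_{q_k}$ converges in $\mathscr E(\mathbb R^n)$, (ii) the $T_k$ have controlled (polynomially bounded) seminorm-type estimates so that $T_k(h)$ is forced to be polynomially bounded in $q_k$ for any $h\in\mathscr E$, and (iii) $c_k\big/\big|\dfrac{\sin(\pi q_k\alpha)}{\pi q_k}\big|$ nevertheless blows up super-polynomially — and the Liouville condition must be invoked with the right value of $N$ at each stage to thread this needle, passing if necessary to a fast-growing subsequence of the $q_k$.
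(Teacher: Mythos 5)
Your mechanism is the right one --- along Liouville denominators $q_k$ the scalars $\sin(\pi q_k\alpha)/(\pi q_k)$ by which $C_{S_\alpha}$ acts on $\mathscr E_{\pi q_k}$ decay faster than any power of $q_k$ --- and your test functions $\phi_{q_k}(x)=e^{i\pi q_k x_1}$ with super-polynomially decaying coefficients are exactly the sequence the paper uses. But your strategy (exhibit one explicit $f\in\ker(C_{S_1})$ outside the range and detect this by pairing against ``component-reading'' distributions $T_k$) leaves its technical heart unproved. The identity you need, $T_k(h*S_\alpha)=\frac{\sin(\pi q_k\alpha)}{\pi q_k}\,T_k(h)$ for an \emph{arbitrary} $h\in\ker(C_{S_1})$, does not follow from Lemma \ref{T:eigenspace1}, which applies only to $h$ in a single eigenspace; a general $h\in\ker(C_{S_1})$ is merely a limit of finite sums from $\bigoplus_j\mathscr E_{\pi j}$. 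To close this you must either (i) show that $T_k$ annihilates each full eigenspace $\mathscr E_{\pi j}$, $j\neq q_k$ --- which needs the (nontrivial, nowhere-established) density of the exponentials $e^{-i\zeta\cdot x}$, $\zeta\in V_j$, in $\mathscr E_{\pi j}$ --- or (ii) prove a division statement in the Paley--Wiener algebra: that $\widetilde T_k\cdot\bigl(\widetilde S_\alpha-\frac{\sin(\pi q_k\alpha)}{\pi q_k}\bigr)\big/\widetilde S_1$ is the transform of some $R_k\in\mathscr E'(\mathbb R^n)$, so that $T_k*S_\alpha=\frac{\sin(\pi q_k\alpha)}{\pi q_k}\,T_k+R_k*S_1$ and the second term dies on $\ker(C_{S_1})$. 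Route (ii) is feasible (take $\widetilde T_k(\zeta)=\widetilde S_1(\zeta)/(\zeta\cdot\zeta-\pi^2q_k^2)$; the zeros of $\widetilde S_1$ are simple and $\widetilde S_1$ is slowly decreasing, so Theorem \ref{T:CT-surjectivity}(e) applies), and the uniform polynomial bounds on the $T_k$ can be checked, but none of this is carried out in your write-up --- you flag it yourself as ``the main obstacle'' --- so as it stands the argument is a plan rather than a proof.

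The paper avoids all of this with a shorter functional-analytic route worth knowing. Since a Liouville number is irrational, $C_{S_\alpha}$ is \emph{injective} on $\ker(C_{S_1})$ (Lemma \ref{T:csalphainjectivity}); if it were also surjective, the Open Mapping Theorem on the Fr\'echet space $\ker(C_{S_1})$ would make $C_{S_\alpha}^{-1}$ continuous, and then the single-eigenspace sequence $f_k=e^{i\pi q_k x_1}/q_k^{k}\to 0$ in $\mathscr E(\mathbb R^n)$, for which $|C_{S_\alpha}^{-1}(f_k)|>1$ pointwise by Lemma \ref{T:eigenspace1}, gives an immediate contradiction. Because each $f_k$ lies in a single eigenspace, the eigenvalue relation applies directly and no projection distributions are needed. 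Your approach, if completed, would buy something slightly stronger --- a concrete element of $\ker(C_{S_1})\setminus C_{S_\alpha}(\ker(C_{S_1}))$, namely $\sum_k f_k$, which the paper only asserts in a remark --- but the price is precisely the $T_k$ machinery you have not built. I recommend either adopting the open-mapping argument or, if you want the explicit non-image element, supplying the division argument (ii) in full.
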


\begin{proof}
	The Liouville number $\alpha$ is necessarily irrational, so by Lemma \ref{T:csalphainjectivity}, the linear operator $C_{S_\alpha}\colon \ker(C_{S_1})\to\ker(C_{S_1})$ is injective.  
	Now suppose to the contrary that it is also surjective. Note that $\ker(C_{S_1})$ is a Frech\'et space, being a closed subspace of the Frech\'et space $\mathscr{E}(\mathbb R^n)$. The Open Mapping Theorem then implies that the inverse linear map $C_{S_\alpha}^{-1}\colon \ker(C_{S_1})\to\ker(C_{S_1})$ is continuous.

	Now according to Theorem \ref{T:C_S1-kernel}, the eigenspaces $\mathscr E_{m\pi}$ is contained in $\ker(C_{S_1})$ for all integers $m$, and the convolution operator $C_{S_\alpha}$ on $\mathscr E_{m\pi}$ is multiplication by $\sin(\pi m\alpha)/(m\pi)$. Hence the restriction of 
	$C_{S_\alpha}^{-1}$ to $\mathscr E_{m\pi}$ is scalar multiplication by $(m\pi)/\sin(\pi m\alpha)$.
	The main idea we will be using is that since the Liouville number $\alpha$ can be rapidly approximated by a sequence $\{p_k/q_k\}$ of rational numbers, the sequence $\{\sin(\pi q_k \alpha)\}$ will go to zero faster than any fixed power of $q_k$, as $k\to\infty$. 
	To be precise, since $\alpha$ is a Liouville number, given any positive integer $k$, there is a fraction $p_k/q_k$, with $q_k\geq 2$, such that
\begin{equation}\label{E:integer-estimate}
	|q_k\alpha-p_k|<\frac{1}{q_k^{k-1}}.
\end{equation}
	It is clear that $q_k\to\infty$ as $k\to\infty$, although this does not matter for what follows.
For each $k$ this gives
$$
	0<|\sin (\pi q_k\alpha)|=|\sin(\pi(q_k\alpha-p_k))|\leq \pi\,|q_k\alpha-p_k|<\frac{\pi}{q_k^{k-1}}.
$$
	Hence
\begin{equation}\label{E:sin-estimate}
	\left|(\sin (\pi q_k\alpha))^{-1}\right|>\frac{q_k^{k-1}}{\pi}.
\end{equation}

	Now consider the sequence $\{f_k\}$ in $\mathscr E(\mathbb R^n)$, with
$$
	f_k(x)=\frac{e^{i\pi q_k x_1}}{q_k^k}, \qquad x=(x_1,\ldots,x_n)\in\mathbb R^n,\;k=1,2,\ldots.
$$
	Clearly, $f_k\in\mathscr{E}_{q_k\pi}\subset\ker(C_{S_1})$. We claim that $f_k\to 0$ in $\mathscr E(\mathbb R^n)$, and hence also in the subspace $\ker(C_{S_1})$. In fact, for any constant coefficient differential operator $D$ on $\mathbb R^n$, there is a polynomial $P$ in one variable (independent of $k$) such that
$$
	D f_k=P(q_k)f_k=\frac{P(q_k)}{q_k^k}\,e^{i\pi q_k x_1}.
$$
	This shows that $\lim_{k\to\infty} Df_k(x)=0$, (in fact) uniformly for all $x\in\mathbb R^n$. 
	
	On the other hand, the inequality \eqref{E:sin-estimate} implies that for any $x\in\mathbb R^n$, 
$$
	\left|\left(C_{S_\alpha}^{-1}(f_k)\right)(x)\right| 
	= \left| \frac{q_k\pi}{\sin(\pi q_k\alpha)}\cdot\frac{e^{i\pi q_k x_1}}{q_k^k} \right|
	= \frac{\pi|\sin(\pi q_k\alpha)|^{-1}}{q_k^{k-1}}>1.
$$
	Thus, $C_{S_\alpha}^{-1}(f_k)$ does not even converge pointwise to $0$, which certainly contradicts the continuity of $C^{-1}_{S_\alpha}$ on $\mathscr E(\mathbb R^n)$. This proves the theorem. 
\end{proof}

\begin{remark}
   Since $C_{S_\alpha}$ is not surjective on $\ker C_{S_1}$, the complementary set $\ker(C_{S_1})\setminus C_{S_\alpha}(\ker(C_{S_1}))$ is uncountable.
   Even more, according to the Open Mapping Theorem (see e.g.\ \cite[Theorem 2.11]{RudinFA}), this non-surjectivity implies that $C_{S_\alpha}(\ker(C_{S_1}))$ is a set of first category in the Frech\'et space $\ker(C_{S_1})$. Consequently, ``most'' elements of $\ker(C_{S_1})$ do not belong to the range $C_{S_\alpha}(\ker(C_{S_1}))$.
\end{remark}

\begin{remark} 
Let $\{f_k\}$ be the sequence in the proof above.  Then it is not hard to see that the series $\sum_{k=1}^\infty f_k$ converges in $\mathscr E(\mathbb R^n)$ to an element of the set $\ker(C_{S_1})\setminus C_{S_\alpha}(\ker(C_{S_1}))$. 
\end{remark}

%

Now Theorem \ref{T:liouville2} becomes a straightforward consequence of Lemma \ref{T:liouville1}.

\begin{proof}[Proof of Theorem \ref{T:liouville2}]
Fix a wave $U(x,t)$ with snapshot data $U_0=f_0$ and $U_1=f_1$.  (There are infinitely many such waves, according to Proposition \ref{T:two-snapshot-existence}.)  Now we apply Proposition \ref{T:linalg1} (and the succeeding Remark \ref{R:linalgremark}) with $V=\mathscr E(\mathbb R^n)$, $T=C_{S_1}$, $S=C_{S_\alpha}$, $v_0=f_1-f_0*S_1'$, and $w_0=U_\alpha-f_0*S_\alpha'$.
Then $(f_1-f_0*S_1',\,U_\alpha-f_0*S_\alpha')$ is a joint pair for $(C_{S_1},C_{S_\alpha})$.

From Remark \ref{R:linalgremark} we see that the set of all $f_\alpha\in\mathscr E(\mathbb R^n)$ which satisfy the compatibility condition \eqref{E:compatibility} is the subset $U_\alpha+\ker C_{S_1}\subset \mathscr E(\mathbb R^n)$, whereas the set of all $f_\alpha$ for which there is a wave $u(x,t)$ such that $u_0=f_0,\,u_1=f_1$, and $u_\alpha=f_\alpha$ coincides with the set $U_\alpha+C_{S_\alpha}(\ker C_{S_1})$.

The last part of the proof of Lemma \ref{T:liouville1} shows that $C_{S_\alpha}(\ker C_{S_1})$ is of first category in $\ker C_{S_1}$.  Therefore for  ``most'' $f_\alpha\in\mathscr E(\mathbb R^n)$ such that $f_0,\,f_1$ and $f_\alpha$ satisfy the compatibility condition \eqref{E:compatibility}, there is no wave $u(x,t)$ whose snapshots at time $t=0,\,1$, and $\alpha$ are $f_0,\,f_1$, and $f_\alpha$, respectively.

This completes the proof of Theorem \ref{T:liouville2}.
\end{proof}

\subsection{The Case When $\alpha$ Is Irrational and Not Liouville}
\ \\ \newline
In this subsection, we assume that $\alpha$ is an irrational number and \emph{not} a Liouville number. As noted earlier, these numbers $\alpha$ constitute almost all real numbers (with respect to Lebesgue measure). The main result is the following.

\begin{theorem}\label{T:irrational-not-Liouville}
	Let $\alpha$ be irrational and not a Liouville number. Suppose $f_0,\,f_1$ and $f_\alpha\in\mathscr E(\mathbb R^n)$ satisfy the compatibility condition \eqref{E:compatibility}:
	$$ (f_1-f_0*S_1')*S_\alpha=(f_\alpha-f_0*S_\alpha')*S_1. $$
	Then there exists a unique smooth wave $u(x,t)$ whose snapshots at times $t=0,1,\alpha$ are precisely the given triple $(f_0,f_1,f_\alpha)$.
\end{theorem}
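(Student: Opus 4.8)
Uniqueness is immediate: since $\alpha$ is irrational, Proposition \ref{T:snapshot-uniqueness} already guarantees that a wave with the prescribed snapshots at $t=0,1,\alpha$, if one exists, is unique. So the content is existence. By Lemma \ref{T:equiv_u_g}, a wave with snapshots $(f_0,f_1,f_\alpha)$ exists if and only if the system \eqref{E:existenceofg} has a solution $g\in\mathscr E(\mathbb R^n)$; writing $v=f_1-f_0*S_1'$ and $w=f_\alpha-f_0*S_\alpha'$, the compatibility hypothesis \eqref{E:compatibility} says precisely that $v*S_\alpha=w*S_1$. Since $C_{S_1}$ and $C_{S_\alpha}$ are surjective on $\mathscr E(\mathbb R^n)$ (the argument around \eqref{E:initvelocitycond}) and commute, Proposition \ref{T:linalg1} and Remark \ref{R:linalgremark}, applied with $V=\mathscr E(\mathbb R^n)$, $T=C_{S_1}$, $S=C_{S_\alpha}$, reduce the problem to showing that $C_{S_\alpha}\colon\ker C_{S_1}\to\ker C_{S_1}$ is surjective (it is injective by Lemma \ref{T:csalphainjectivity}, so this means bijective); equivalently, by condition (c) of that proposition, that $\ker C_{S_\alpha*S_1}\subset\ker C_{S_\alpha}+\ker C_{S_1}$.

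I would then transfer this to the Fourier--Laplace side and read it as a division problem in the algebra of Fourier--Laplace transforms of compactly supported distributions. Put $F_1=\widetilde S_1$ and $F_2=\widetilde S_\alpha$, so $F_1(\zeta)=\sin(\sqrt{\zeta\cdot\zeta})/\sqrt{\zeta\cdot\zeta}$ and $F_2(\zeta)=\sin(\alpha\sqrt{\zeta\cdot\zeta})/\sqrt{\zeta\cdot\zeta}$, both entire of exponential type. Two structural facts should be recorded. First, $F_1$ and $F_2$ \emph{have no common zeros}: $F_1$ vanishes precisely on the smooth affine quadrics $V_j=\{\zeta\cdot\zeta=\pi^2 j^2\}$ $(j\ge 1)$ and $F_2$ precisely on $W_k=\{\zeta\cdot\zeta=\pi^2 k^2/\alpha^2\}$ $(k\ge 1)$, and $V_j\cap W_k\ne\varnothing$ would force $j\alpha=k$, impossible since $\alpha$ is irrational; moreover each transform has a simple zero along the corresponding quadric. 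Consequently every syzygy of $(F_1,F_2)$ is a multiple of the Koszul syzygy $(F_2,-F_1)$, and the single compatibility condition attached to it is exactly $v*S_\alpha=w*S_1$, i.e. \eqref{E:compatibility}. Second --- and this is where non-Liouville enters --- the pair $(F_1,F_2)$ is \emph{jointly slowly decreasing}; the crucial ingredient is a \L{}ojasiewicz-type separation of the two zero sets. On $V_j$ the function $F_2$ is the constant $\sin(\pi j\alpha)/(\pi j)$, and on $W_k$ the function $F_1$ is the constant $\sin(\pi k/\alpha)/(\pi k/\alpha)$. A short Diophantine lemma shows that $\alpha$ irrational and not Liouville forces, for some $c>0$ and $N$, the bounds $\operatorname{dist}(j\alpha,\mathbb Z)\ge c\,j^{-N}$ for all $j\ge1$ and (since $1/\alpha$ is then also non-Liouville) $\operatorname{dist}(k/\alpha,\mathbb Z)\ge c\,k^{-N}$ for all $k\ge1$; combined with $|\zeta|\ge\pi j$ on $V_j$ (and the analogue on $W_k$), this gives lower bounds of the form $|F_2|\ge c'(1+|\zeta|)^{-N-1}$ along $V(F_1)$ and $|F_1|\ge c'(1+|\zeta|)^{-N-1}$ along $V(F_2)$. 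These are uniform along the non-compact quadrics precisely because the transforms are constant on them; it is the failure of such a bound along a subsequence of the $V_j$ that drives the negative result of Theorem \ref{T:liouville2} when $\alpha$ is Liouville. That $\max(|F_1|,|F_2|)$ admits the required lower bound near every real point is the easy part: a pigeonhole argument along a radial segment of length comparable to $\log|\xi|$, using that the density-less-than-one ``bad'' sets $\{|\sin r|<\delta\}$ and $\{|\sin\alpha r|<\delta\}$ cannot together cover such a segment once it is long enough.

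With these verifications in hand I would invoke Ehrenpreis' theorem on ideals of entire functions generated by jointly slowly decreasing functions (see \cite{Ehrenpreis1970}; also \cite{BerensteinTaylor}): such an ideal is closed and localizable, so membership in it is detected by the associated (here, Koszul) compatibility conditions, and correspondingly the system $g*S_1=v$, $g*S_\alpha=w$ is solvable in $g\in\mathscr E(\mathbb R^n)$ precisely when $v*S_\alpha=w*S_1$. That holds by hypothesis, so $g$ exists, and by Lemma \ref{T:equiv_u_g} it is the initial velocity of a wave $u$ with snapshots $f_0,f_1,f_\alpha$ at $t=0,1,\alpha$; uniqueness was noted at the outset. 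Equivalently, one may phrase the Ehrenpreis input as the identity $\ker C_{S_\alpha*S_1}=\overline{\bigoplus_{j\ge1}\mathscr E_{\pi j}\oplus\bigoplus_{k\ge1}\mathscr E_{\pi k/\alpha}}$ --- proved by a Hahn--Banach and minimum-modulus argument exactly as in Theorem \ref{T:C_S1-kernel}, since $F_1F_2$ is slowly decreasing with simple zeros on the $V_j$ and $W_k$ --- together with the fact that, under the non-Liouville hypothesis, the algebraic sum $\ker C_{S_\alpha}+\ker C_{S_1}$ is already closed and hence equals that closure, which is condition (c) of Proposition \ref{T:linalg1}.

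The step I expect to be the main obstacle is this last invocation: one must match the geometry here precisely to the hypotheses of the ``jointly slowly decreasing'' framework --- in particular, promoting the pointwise Diophantine bounds to the uniform \L{}ojasiewicz-type lower bounds along the non-compact varieties $V_j,W_k$ in the exact form the theory demands, and confirming that ``no common zeros plus these separations'' really does yield localizability and reduces the compatibility conditions to the single Koszul one. Everything else --- the reduction via Proposition \ref{T:linalg1}, the Diophantine lemma translating non-Liouville into $\operatorname{dist}(q\alpha,\mathbb Z)\ge c\,q^{-N}$, the identification of \eqref{E:compatibility} with the Koszul syzygy, and the pigeonhole estimate --- is routine by comparison.
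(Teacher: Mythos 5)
Your overall strategy is the paper's: uniqueness from Proposition \ref{T:snapshot-uniqueness}, reduction to solving the system \eqref{E:existenceofg}, and then an appeal to Ehrenpreis' ideal theory powered by a Diophantine lower bound coming from the non-Liouville hypothesis. Where you diverge is in how that last input is packaged, and the divergence matters for the step you flag as your main obstacle. The paper does not work variety-by-variety with \L{}ojasiewicz separations, Koszul syzygies, and a pigeonhole argument near real points; instead, Theorem \ref{T:fund-estimate} proves the single global pointwise estimate $|\widetilde S_1(\zeta)|+|\widetilde S_\alpha(\zeta)|\geq C(1+|\zeta|)^{-N}$ for \emph{every} $\zeta\in\mathbb C^n$. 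The real-axis case is elementary: given $x>2\pi$, take $q$ and $p$ to be the integers nearest $x/\pi$ and $\alpha x/\pi$; the non-Liouville bound $|q-p/\alpha|\geq 1/(|\alpha|q^{N-1})$ forces at least one of $|x-q\pi|$, $|\alpha x-p\pi|$ to be $\gtrsim q^{1-N}$, and since both are at most $\pi/2$ one gets $|\sin x|+|\sin\alpha x|\gtrsim (1+x)^{1-N}$ pointwise; the extension to $\mathbb C$ uses $|\sin z|^2=\sin^2x+\sinh^2y$. This pointwise bound is exactly the hypothesis \eqref{E:exp-estimate1} of the form of Ehrenpreis' theorem the paper quotes (Theorem \ref{T:Ehrenpreis-estimate}), which outputs a Bezout identity $\Phi*S_\alpha+\Psi*S_1=\delta_0$ with $\Phi,\Psi\in\mathscr E'(\mathbb R^n)$; the solution is then written down explicitly as $g=h*\Phi+f*\Psi$ with $f=f_1-f_0*S_1'$, $h=f_\alpha-f_0*S_\alpha'$ (Lemma \ref{T:kernelsum}), with no need to verify localizability, identify the syzygy module, or even pass through Proposition \ref{T:linalg1}. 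In short: the ``matching of the geometry to the jointly-slowly-decreasing framework'' that you correctly identify as the hard, unfinished part of your plan is avoidable, because the non-Liouville hypothesis yields the lower bound at every point rather than only along the zero quadrics or at a log-nearby point; your Diophantine lemma, your observation that the zero sets of $\widetilde S_1$ and $\widetilde S_\alpha$ are disjoint for irrational $\alpha$, and your identification of \eqref{E:compatibility} with the single compatibility condition are all consistent with the paper, but the heavy Berenstein--Taylor machinery is not needed.
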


In order to prove the theorem, and in view of Proposition \ref{T:linalg1} it is worthwhile to consider the following more general question. 
Let $\mu$ and $\nu$ be two distributions in $\mathscr E'(\mathbb R^n)$. Suppose $f$ and $h$ are two functions in $\mathscr E(\mathbb R^n)$ such that $f*\mu=h*\nu$, under what conditions is $(f,h)$ a joint pair of $(C_\nu,C_\mu)$?
It turns out that the answer is affirmative under a certain joint decay condition on the Fourier transforms $\widetilde\mu$ and $\widetilde\nu$, which was first obtained by Ehrenpreis in his 1970 book \cite{Ehrenpreis1970}. 
The crucial result is as follows.  (Its proof can be found in the book, in the remarks after Theorem 11.2.) 

\begin{theorem}\label{T:Ehrenpreis-estimate} (Ehrenpreis, 1970.)
	Let $\mu,\,\nu\in\mathscr E'(\mathbb R^n)$, and suppose that there is a constant $A>0$ such that
	\begin{equation}\label{E:exp-estimate1}
		|\widetilde\mu(\zeta)|+|\widetilde\nu(\zeta)|\geq (A+|\zeta|)^{-A}\,\exp(-A\,|\mathrm{Im}(\zeta)|),\qquad\text{for all }\zeta\in\mathbb C^n.
	\end{equation}
	Then $1$ belongs in the ideal of the ring $\widetilde{\mathscr E}'(\mathbb R^n)$ generated by $\widetilde\mu$ and $\widetilde\nu$.  Thus the ideal in the convolution ring $\mathscr E'(\mathbb R^n)$ generated by $\mu$ and $\nu$ is $\mathscr{E}'(\mathbb R^n)$ itself.  In particular, there exist distributions $\Phi$ and $\Psi$ in $\mathscr E'(\mathbb R^n)$ such that
	\begin{equation}\label{E:unit-ideal}
		\Phi*\mu+\Psi*\nu=\delta_0,
	\end{equation}
	where $\delta_0$ is the Dirac delta function.
\end{theorem}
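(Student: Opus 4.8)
The plan is to deduce the statement from the general theory of ideals generated by slowly decreasing functions, as developed by Ehrenpreis. First I would observe that the hypothesis \eqref{E:exp-estimate1} is precisely the notion of a \emph{jointly slowly decreasing} pair $(\widetilde\mu,\widetilde\nu)$: the lower bound $(A+|\zeta|)^{-A}\exp(-A|\mathrm{Im}\,\zeta|)$ is the natural two-function analogue of the single-function slow-decrease condition of Definition \ref{D:slow-decrease}, with the exponential factor $\exp(-A|\mathrm{Im}\,\zeta|)$ reflecting that $\widetilde\mu,\widetilde\nu$ are Fourier--Laplace transforms of compactly supported distributions (hence of exponential type). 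The ring $\widetilde{\mathscr E}'(\mathbb R^n)$ is, by the Paley--Wiener--Schwartz theorem, the ring of entire functions on $\mathbb C^n$ satisfying a polynomial-times-exponential bound, and convolution of distributions corresponds to pointwise multiplication of transforms; so it suffices to prove the purely function-theoretic statement that $1$ lies in the ideal generated by $\widetilde\mu$ and $\widetilde\nu$ in this ring.

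The heart of the matter is a division/interpolation argument on the common zero set. Set $F=\widetilde\mu$, $G=\widetilde\nu$, and $V=\{\,\zeta\in\mathbb C^n : F(\zeta)=G(\zeta)=0\,\}$. Condition \eqref{E:exp-estimate1} forces $|F|+|G|$ to be bounded below away from $V$ in a controlled way, and in particular $V$ is ``thin'' enough that a bounded holomorphic function with prescribed behavior can be constructed near it. The strategy, following the remarks after Theorem 11.2 in \cite{Ehrenpreis1970}, is: (i) using the lower bound \eqref{E:exp-estimate1}, produce entire functions $A_1,A_2$ of the appropriate exponential type with $A_1F+A_2G$ equal to $1$ on a neighborhood of $V$ and of controlled size --- this is where one invokes Ehrenpreis' bounded-approximation / minimum-modulus machinery (the same circle of ideas behind Theorem \ref{T:CT-surjectivity}(e) and the ``fundamental principle''); (ii) writing $1-(A_1F+A_2G)=FG\cdot H$ for some entire $H$ (legitimate since the left side vanishes on the zero sets of $F$ and of $G$ separately, wherever those are smooth, and the quotient extends across the singular locus by Hartogs-type arguments), and then setting $\Phi\leftrightarrow A_1+HG$, $\Psi\leftrightarrow A_2$ so that $\Phi F+\Psi G=1$ identically; (iii) checking that $\Phi,\Psi$ lie in $\widetilde{\mathscr E}'(\mathbb R^n)$, i.e. satisfy the Paley--Wiener growth bounds, which is guaranteed by the quantitative nature of the construction in (i). Taking inverse Fourier--Laplace transforms then yields $\Phi*\mu+\Psi*\nu=\delta_0$ in $\mathscr E'(\mathbb R^n)$, and the ideal statement follows at once.

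I expect the main obstacle to be step (i): constructing the functions $A_1,A_2$ with simultaneously good growth and the right interpolation behavior on $V$. This is exactly the nontrivial input from \cite{Ehrenpreis1970} --- it rests on the $\overline\partial$-techniques and $L^2$ estimates (or, in Ehrenpreis' original language, on semilocal and bounded-approximation arguments) that let one pass from a local solution of $\Phi F+\Psi G=1$ near $V$ to a global entire one without losing control of the exponential type. Since the excerpt explicitly grants us this theorem as a citable result, the honest write-up is to state that the verification of \eqref{E:exp-estimate1} implies joint slow decrease, cite the relevant pages of \cite{Ehrenpreis1970} (and, for the single-variable flavor, \cite{BerensteinTaylor}), and record the translation between the transform-side ideal membership and the convolution identity \eqref{E:unit-ideal}; the genuinely hard analysis is black-boxed.
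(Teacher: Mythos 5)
Your proposal is essentially the same as the paper's treatment: the paper gives no proof of Theorem \ref{T:Ehrenpreis-estimate} at all, but simply cites the remarks after Theorem 11.2 of \cite{Ehrenpreis1970}, which is exactly where you land. One caveat about your heuristic sketch: the hypothesis \eqref{E:exp-estimate1} gives a strictly positive lower bound for $|\widetilde\mu(\zeta)|+|\widetilde\nu(\zeta)|$ at \emph{every} $\zeta\in\mathbb C^n$, so the common zero set $V=\{\widetilde\mu=\widetilde\nu=0\}$ is empty; the difficulty is not interpolation on a ``thin'' variety $V$ but the quantitative corona/B\'ezout problem of converting the smooth solution $u_1=\overline{\widetilde\mu}/(|\widetilde\mu|^2+|\widetilde\nu|^2)$, $u_2=\overline{\widetilde\nu}/(|\widetilde\mu|^2+|\widetilde\nu|^2)$ into entire functions of Paley--Wiener growth (via $\overline\partial$-corrections with weighted $L^2$ estimates, or Ehrenpreis' semilocal arguments). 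Likewise your step (ii) as written does not quite work, since $1-(A_1\widetilde\mu+A_2\widetilde\nu)$ has no reason to vanish on $Z(\widetilde\mu)\cup Z(\widetilde\nu)$; but since both you and the paper black-box the hard analysis by citing \cite{Ehrenpreis1970}, this does not affect the validity of the write-up.
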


Ehrenpreis' result above implies the following result about joint pairs under convolutions.

\begin{lemma}\label{T:kernelsum}
	Let $\mu,\,\nu\in\mathscr E'(\mathbb R^n)$, and assume that there is a constant $A>0$ such that the decay estimate \eqref{E:exp-estimate1} holds. Suppose that $f$ and $h$ are functions in $\mathscr E(\mathbb R^n)$ such that 
	$$ f*\mu=h*\nu. $$
	Then $(f,h)$ is a joint pair of $(C_\nu,C_\mu)$ in $\mathscr E(\mathbb R^n)$, namely there is a function $g\in\mathscr E(\mathbb R^n)$ such that
	\begin{equation}\label{E:conv-joint-pair}
	 f=g*\nu,\quad h=g*\mu.
	\end{equation}
\end{lemma}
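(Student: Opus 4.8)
The plan is to deduce Lemma \ref{T:kernelsum} directly from Theorem \ref{T:Ehrenpreis-estimate}. By that theorem, the decay estimate \eqref{E:exp-estimate1} gives distributions $\Phi,\Psi\in\mathscr E'(\mathbb R^n)$ with $\Phi*\mu+\Psi*\nu=\delta_0$. The natural candidate for the common ``potential'' $g$ is then
\[
g=h*\Phi+f*\Psi.
\]
First I would verify $g*\nu=f$: convolving, $g*\nu=h*\Phi*\nu+f*\Psi*\nu$. Using the hypothesis $h*\nu=f*\mu$ in the first term (all convolutions here are of compactly supported distributions with smooth functions, so associativity and commutativity are unproblematic), this becomes $f*\mu*\Phi+f*\Psi*\nu=f*(\Phi*\mu+\Psi*\nu)=f*\delta_0=f$. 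Symmetrically, $g*\mu=h*\Phi*\mu+f*\Psi*\mu$, and rewriting $f*\mu=h*\nu$ in the second term yields $h*\Phi*\mu+h*\nu*\Psi=h*(\Phi*\mu+\Psi*\nu)=h$. Hence \eqref{E:conv-joint-pair} holds.

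The only genuine point requiring a word of care is that $g$, as defined, lies in $\mathscr E(\mathbb R^n)$ rather than merely in $\mathscr D'(\mathbb R^n)$: but $g$ is a sum of convolutions of the smooth functions $f,h$ with compactly supported distributions $\Phi,\Psi$, and such a convolution is again smooth, so $g\in\mathscr E(\mathbb R^n)$ automatically. There is no real obstacle here — the substance of the lemma is entirely contained in Ehrenpreis' Bézout-type identity \eqref{E:unit-ideal}, which we are permitted to invoke; the lemma is essentially the observation that a right inverse to the pair of convolution operators $(C_\nu,C_\mu)$ on the ``diagonal'' subspace $\{(f,h):f*\mu=h*\nu\}$ is furnished by the cofactors $\Phi,\Psi$. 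I would present the proof in just a few lines: introduce $g$, check the two identities by the displayed associativity manipulations, and note the smoothness of $g$. If I wanted to foreshadow the application, I could close by remarking that in the setting of Theorem \ref{T:irrational-not-Liouville} one takes $\mu=S_1$, $\nu=S_\alpha$, so that the hypothesis $f*\mu=h*\nu$ is exactly the compatibility condition \eqref{E:compatibility} applied to $f=f_1-f_0*S_1'$ and $h=f_\alpha-f_0*S_\alpha'$ — but the verification that $S_1$ and $S_\alpha$ satisfy \eqref{E:exp-estimate1} when $\alpha$ is irrational and not Liouville is the work of the next step and belongs outside this lemma.
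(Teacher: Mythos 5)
Your proof is correct and is essentially identical to the paper's: the paper likewise invokes Theorem \ref{T:Ehrenpreis-estimate} to produce $\Phi,\Psi$ with $\Phi*\mu+\Psi*\nu=\delta_0$, sets $g=h*\Phi+f*\Psi$, and states that the hypothesis $f*\mu=h*\nu$ ``easily implies'' \eqref{E:conv-joint-pair}. You have merely written out the two short convolution computations and the smoothness remark that the paper leaves implicit.
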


\begin{proof}
	Since the Fourier transforms $\widetilde\mu$ and $\widetilde\nu$ satisfy the decay estimate \eqref{E:exp-estimate1}, Theorem \ref{T:Ehrenpreis-estimate} implies that there are distributions
	$\Phi$ and $\Psi$ in $\mathscr E'(\mathbb R^n)$ such that the relation \eqref{E:unit-ideal} holds.  Let $g=h*\Phi+f*\Psi$.  Then the hypothesis $f*\mu=h*\nu$ easily implies \eqref{E:conv-joint-pair}.
\end{proof}

In the present case in which $\alpha$ is irrational and not Liouville, the next theorem shows that the Fourier transforms $\widetilde S_1$ and $\widetilde S_\alpha$ satisfy a joint decay estimate even stronger than \eqref{E:exp-estimate1}. The idea is that since $\alpha$ is not Liouville, $\sin x$ and $\sin\alpha x$ will not be ``too close'' to 0 simultaneously for all $x\in\mathbb R$.

\begin{theorem}\label{T:fund-estimate}
Suppose that $\alpha$ is not a Liouville number and is irrational.  Then there exists a constant $C>0$ and an integer $N>0$ such that
\begin{equation}\label{E:fund-estimate1}
|\widetilde S_1(\zeta)|+|\widetilde S_\alpha(\zeta)|\geq C\,(1+|\zeta|)^{-N},
\end{equation}
for all $\zeta\in \mathbb C^n$.
\end{theorem}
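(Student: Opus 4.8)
The plan is to reduce the two-variable estimate \eqref{E:fund-estimate1} to a one-variable Diophantine estimate on $\mathbb R$, and then to propagate that estimate off the real axis by elementary complex analysis. First I would observe that $\sin w/w$ and $\sin(\alpha w)/w$ are even entire functions of $w$, hence are well-defined functions of $\zeta$ via $w^2=\zeta\cdot\zeta$, equal respectively to $\widetilde S_1(\zeta)$ and $\widetilde S_\alpha(\zeta)$; moreover $|w|^2=|\zeta\cdot\zeta|\le\sum_j|\zeta_j|^2=|\zeta|^2$, so $|w|\le|\zeta|$ and therefore $(1+|w|)^{-N}\ge(1+|\zeta|)^{-N}$. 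Thus it suffices to find $C>0$ and an integer $N>0$ with
\[
\Bigl|\tfrac{\sin w}{w}\Bigr|+\Bigl|\tfrac{\sin\alpha w}{w}\Bigr|\ \ge\ C\,(1+|w|)^{-N}\qquad\text{for all }w\in\mathbb C .
\]

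The heart of the argument is the real case. Negating Definition \ref{D:liouville-number} (and handling the case $q=1$ separately, using $\alpha\notin\mathbb Z$) shows that, since $\alpha$ is irrational and not Liouville, there exist an integer $N_0$ and a constant $c_0>0$ with $|q\alpha-p|\ge c_0|q|^{-(N_0-1)}$ for all $p\in\mathbb Z$ and all nonzero $q\in\mathbb Z$. From this I claim $|\sin x|+|\sin\alpha x|\ge C_1(1+|x|)^{-N_0}$ for all real $x$ with $|x|\ge\tfrac12$. Indeed, taking $k$ and $m$ to be the integers nearest $x/\pi$ and $\alpha x/\pi$, one has $|\sin x|\ge\tfrac2\pi|x-k\pi|$ and $|\sin\alpha x|\ge\tfrac2\pi|\alpha x-m\pi|$; if $k=0$ then $\tfrac12\le|x|\le\tfrac\pi2$ forces $|\sin x|\ge\tfrac1\pi$ (a constant, with $|x|$ then bounded), while if $k\ne0$ the identity $\alpha(x-k\pi)-(\alpha x-m\pi)=\pi(m-\alpha k)$, the triangle inequality, the bound $|m-\alpha k|\ge c_0|k|^{-(N_0-1)}$, and $|k|\le1+|x|$ together give $|x-k\pi|+|\alpha x-m\pi|\gtrsim|m-\alpha k|\gtrsim(1+|x|)^{-N_0}$.

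Next I would transfer this to complex $w$ in three regimes. For $|w|\le1$: $\sin w/w$ is nonvanishing on the closed unit disk (its zeros sit at $k\pi$ with $|k\pi|\ge\pi>1$), so $|\sin w/w|\ge c_2>0$ there by compactness. For $|w|\ge1$ write $w=u+iv$ with $u,v$ real; from $|\sin(u+iv)|^2=\sin^2u+\sinh^2v$ we get $|\sin w|\ge|v|$, so the desired bound is immediate once $|v|\ge\varepsilon(1+|w|)^{-N_0}$. If instead $|v|<\varepsilon(1+|w|)^{-N_0}$, then $u^2=|w|^2-v^2\ge\tfrac12$, so the real estimate applies to $u$; combining it with $|\sin w-\sin u|\le|v|\cosh|v|$ and $|\sin\alpha w-\sin\alpha u|\le|\alpha v|\cosh|\alpha v|$ (both $\le C_3|v|$ for $|v|\le1$) and using $|u|\le|w|$ yields
\[
|\sin w|+|\sin\alpha w|\ \ge\ C_1(1+|w|)^{-N_0}-(2+C_3)\varepsilon(1+|w|)^{-N_0}\ \ge\ \tfrac{C_1}{2}(1+|w|)^{-N_0}
\]
once $\varepsilon$ is chosen small enough. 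Hence $|\sin w|+|\sin\alpha w|\gtrsim(1+|w|)^{-N_0}$ for all $|w|\ge1$; dividing by $|w|\le1+|w|$ and folding in the $|w|\le1$ case gives the displayed bound with $N=N_0+1$, and then \eqref{E:fund-estimate1} follows from $|w|\le|\zeta|$. (Note the resulting estimate is stronger than \eqref{E:exp-estimate1}, which is what is needed to feed into Lemma \ref{T:kernelsum} and obtain Theorem \ref{T:irrational-not-Liouville}.)

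The only genuine content is the real Diophantine estimate in the second paragraph; everything else is bookkeeping. The one step that requires care is the third regime above: one must choose the threshold $\varepsilon=\varepsilon(\alpha,C_1,C_3)$ small enough that the error incurred in moving $\sin$ from $u$ to $w=u+iv$, which is of size $\lesssim\varepsilon(1+|w|)^{-N_0}$, does not swamp the main term, which is of size $\gtrsim(1+|w|)^{-N_0}$. This balancing works precisely because both quantities are governed by the \emph{same} negative power $(1+|w|)^{-N_0}$, so a fixed small $\varepsilon$ suffices uniformly in $w$.
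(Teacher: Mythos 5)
Your proposal is correct, and its mathematical core --- the real one-variable Diophantine estimate obtained by taking the nearest integers $k,m$ to $x/\pi$ and $\alpha x/\pi$ and invoking the negation of the Liouville condition, followed by the reduction of the $\mathbb C^n$ statement to one complex variable via $w^2=\zeta\cdot\zeta$ and $|w|\le|\zeta|$ --- is exactly the paper's argument. The only place you diverge is the passage from $\mathbb R$ to $\mathbb C$: you run a dichotomy on whether $|\mathrm{Im}\,w|$ exceeds a threshold $\varepsilon(1+|w|)^{-N_0}$, using $|\sin w|\ge|\sinh v|\ge|v|$ in one regime and a perturbation bound $|\sin w-\sin u|\le|v|\cosh|v|$ in the other, with $\varepsilon$ tuned so the perturbation does not swamp the main term. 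The paper instead exploits the identity $|\sin(x+iy)|^2=\sin^2x+\sinh^2y$ to write, in one line,
\[
|\sin z|+|\sin(\alpha z)|\ \ge\ \tfrac12\bigl(|\sin x|+|\sin(\alpha x)|\bigr)+\tfrac12\bigl(|\sinh y|+|\sinh(\alpha y)|\bigr)\ \ge\ C_1\,|z|\,(1+|z|)^{-N},
\]
which needs no case analysis and no choice of $\varepsilon$, and retains the factor $|z|$ so that dividing by $z$ costs nothing in the exponent (you lose one power, ending with $N_0+1$; this is harmless for feeding into Lemma \ref{T:kernelsum}). Your version is slightly more robust in that it uses only the crude bounds $|\sin w|\ge|\mathrm{Im}\,w|$ and a derivative estimate rather than the exact modulus identity, at the cost of the threshold bookkeeping you correctly flag as the delicate step.
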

\begin{proof}
We will first show that there exists a positive constant $C$ and a positive integer $N$ such that
\begin{equation}\label{E:fund-estimate2}
\frac{|\sin x|}{|x|}+\frac{|\sin (\alpha x)|}{|x|} \geq C\,(1+|x|)^{-N}
\end{equation} 
for all $x\in\mathbb R$.  Now since $\alpha$ is irrational, the left hand side above is never zero, so it has a positive minimum
value on any compact interval. Moreover, the functions $(\sin x)/x$ and $(\sin (\alpha x))/x$ are even.  It is therefore
enough for us to prove the estimate \eqref{E:fund-estimate2} (for appropriate $C$ and $N$) when $x> 2\pi$.  The inequality \eqref{E:fund-estimate2} will then follow for all $x$ by adjusting $C$ if necessary.

Since we are assuming that $\alpha$ is irrational and not a Liouville number, there is a positive integer $N$ such that
\begin{equation}\label{E:not-Liouville}
\left|\alpha-\frac pq\right|\geq \frac{1}{q^N}
\end{equation}
for all integers $p$ and all integers $q\geq 2$.

Let $q$ be an integer closest to $x/\pi$ and $p$ an integer closest to $\alpha x/\pi$.   
Since $x>2\pi$, we have $q\geq 2$; moreover
\begin{equation}\label{E:closest-int}
\left|\frac{x}{\pi}-q\right|\leq\frac{1}{2}\quad \text{and}\quad \left|\frac{x}{\pi}-\frac{p}{\alpha}\right|\leq\frac{1}{2|\alpha|}.
\end{equation}
Now \eqref{E:not-Liouville} implies that
$$
	\left|q-\frac p\alpha\right|\geq \frac{1}{|\alpha|\, q^{N-1}},
$$
so it follows that
$$
	\text{either}\quad\left|\frac{x}{\pi}-q\right|\geq\frac{1}{2|\alpha|\,q^{N-1}}\quad\text{or}\quad \left|\frac{x}{\pi}-\frac{p}{\alpha}\right|\geq \frac{1}{2|\alpha|\,q^{N-1}}.
$$
Let $D=\min\{\pi/2,\,\pi/(2|\alpha|)\}$. Then we see that
$$
	\text{either}\quad|x-q\pi|\geq\frac{D}{q^{N-1}}\quad\text{or}\quad |\alpha x-p\pi|\geq \frac{D}{q^{N-1}}.
$$
Now \eqref{E:closest-int} also shows that $|x-q\pi|\leq\pi/2$ and $|\alpha x-p\pi|\leq \pi/2$. Noting that $|(\sin t)/t|\geq2/\pi\geq 1/2$ for all $t\in [-\pi/2, \pi/2]$, we obtain
\begin{align*}
	|\sin x|+|\sin (\alpha x)|&=|\sin (x-q\pi)|+|\sin (\alpha x-p\pi)|\\
	&\geq\frac 12\,|x-q\pi|+\frac 12\,|\alpha x-p\pi|\\
	&\geq \frac{D}{2q^{N-1}}\\
	&\geq \frac{D}{2(1/2+(x/\pi))^{N-1}}\\
	&>\frac{D}{2(1+x)^{N-1}}.
\end{align*}
Letting $C=D/2$, the inequality \eqref{E:fund-estimate2} follows for $x>2\pi$. Hence, as mentioned earlier, modulo a possible adjustment of $C$, \eqref{E:fund-estimate2} holds for all $x\in\mathbb R$.

Next let us prove that for the same integer $N$ as in \eqref{E:fund-estimate2} there is a positive constant $C_1$ such that the inequality 
\begin{equation}\label{E:fund-estimate4}
\left|\frac{\sin z}{z}\right|+\left|\frac{\sin (\alpha z)}{z}\right|\geq C_1\,(1+|z|)^{-N}
\end{equation}
holds for all $z\in\mathbb C$.
  
For this, put $z=x+iy$ where $x,y\in\mathbb{R}$, so that $|\sin z|^2=\sin^2 x+\sinh^2 y$, and $|\sin (\alpha z)|^2=\sin^2 (\alpha x)+\sinh^2 (\alpha y)$.  Then by \eqref{E:fund-estimate2} we obtain
\begin{align*}
	|\sin z|+|\sin(\alpha z)|
	&\geq\frac{1}{2}\,\left(|\sin x|+|\sin (\alpha x)|\right)+\frac{1}{2}\,\left(|\sinh y|+|\sinh (\alpha y)|\right)\\
	&\geq  \frac C2\,|x|\,(1+|x|)^{-N}+\frac{(1+|\alpha|)\,|y|}{2}\\
	&\geq C_1\,(|x|+|y|)\,(1+|x|)^{-N}\\
	&\geq C_1\,|z|\,(1+|z|)^{-N},
\end{align*}
where we have put $C_1=\min\{C/2,\,(1+|\alpha|)/2\}$.  This proves \eqref{E:fund-estimate4}.

Finally, suppose that $\zeta=(\zeta_1,\ldots,\zeta_n)\in\mathbb C^n$.  We choose a $z\in \mathbb C$ such that $z^2=\zeta\cdot\zeta$.  It is clear from this that $|z|\leq |\zeta|$. The definition \eqref{E:wave-fund-soln} gives
$\widetilde S_1(\zeta)=(\sin z)/z$ and $\widetilde S_\alpha(\zeta)=(\sin(\alpha z))/z$, so we obtain from \eqref{E:fund-estimate4} that
\begin{align*}
|\widetilde S_1(\zeta)|+|\widetilde S_\alpha(\zeta)|&=\left|\frac{\sin z}{z}\right|+\left|\frac{\sin (\alpha z)}{z}\right|\\
&\geq C_1\,(1+|z|)^{-N}\\
&\geq C_1\,(1+|\zeta|)^{-N}.
\end{align*}
This is \eqref{E:fund-estimate1}, if we rename $C_1$ as $C$.
\end{proof}
Our main result Theorem \ref{T:irrational-not-Liouville} is now an immediate consequence of the above result.

\begin{proof}[Proof of Theorem \ref{T:irrational-not-Liouville}]
The preceding lemma says that the Fourier transforms $S_1$ and $S_\alpha$  satisfy the joint decay estimate \eqref{E:fund-estimate1}, and hence satisfies \eqref{E:exp-estimate1} as well, for some $A>0$. Therefore we can invoke Lemma \ref{T:kernelsum} with $\mu=S_\alpha$, $\nu=S_1$, $f=f_1-f_0*S_1'$, and $h=f_\alpha-f_0*S_\alpha'$.  This gives us a function $g\in\mathscr{E}(\mathbb{R}^n)$ satisfying \eqref{E:conv-joint-pair}, which in our case becomes $f_1-f_0*S_1'=g*S_1,\,f_\alpha-f_0*S_\alpha'=g*S_\alpha$.  

The relations above show that $g$ is an initial velocity for a wave with snapshots $f_0,\,f_1$, and $f_\alpha$ at $t=0,1$, and $\alpha$, respectively.  Since $\alpha$ is irrational, Proposition \ref{T:snapshot-uniqueness} shows that this wave (as well as $g$) is unique.
\end{proof}

\subsection{The Case When $\alpha$ Is A Rational Number}
\ \\ \newline
Finally we consider the case where $\alpha$ is a rational number not equal to $0$ or $1$.  (Recall that rational numbers are not Liouville numbers.) Write $\alpha=p/q$ in lowest terms, with $q>0$. It turns out (somewhat surprisingly) that the compatibility condition \eqref{E:compatibility} is no longer sufficient to ensure the existence of a wave $u$ with given snapshots $(f_0,\,f_1,f_\alpha)$ at times $t=0,\,1$, and $\alpha$, respectively. Instead, these functions will need to satisfy a stronger compatibility condition, which we formulate below.

Without losing any generality, let us rescale coordinates by $(x,t)\mapsto (qx,qt)$ to consider instead the three-snapshot problem at the times $t=0,\,p$, and $q$.  Explicitly, suppose that $u(x,t)$ is a wave with Cauchy data $u(x,0)=f(x)$ and $(\partial_t u)(x,0)=g(x)$, for $f,\,g\in \mathscr E(\mathbb R^n)$.  Then the snapshots $u_p$ and $u_q$ satisfy
\begin{equation}\label{E:intconvsoln}
\begin{aligned}
u_p-u_0*S_p'&=g*S_p;\\
u_q-u_0*S_q'&=g*S_q.
\end{aligned}
\end{equation}
For any integer $m$, let $\Psi_m$ be the (compactly supported) distribution given in \eqref{E:psi-m}.  By taking Fourier transforms, it is clear that $S_p*\Psi_q=S_q*\Psi_p$.  Therefore the relations \eqref{E:intconvsoln} imply that
\begin{equation}\label{E:integercompatibility}
(u_p-u_0*S_p')*\Psi_q=(u_q-u_0*S_q')*\Psi_p.
\end{equation}
The above compatibility condition is therefore satisfied by any wave $u$ with integer time snapshots $u_0,\,u_p$, and $u_q$.

The condition \eqref{E:integercompatibility} easily implies the original compatibility condition \eqref{E:compatibility}. In fact, we simply note that $\Psi_p*S_1=S_p$ and $\Psi_q*S_1=S_q$, so convolving both sides of \eqref{E:integercompatibility} with $S_1$ gives us the relation
\begin{equation}\label{E:integercompatibility2}
(u_p-u_0*S_p')*S_q=(u_q-u_0*S_q')*S_p,
\end{equation}
which is \eqref{E:compatibility}, suitably modified.

On the other hand, \eqref{E:integercompatibility2} does not imply  \eqref{E:integercompatibility}. To see this, consider the function  $h(x)=e^{i\pi x_1}$.   
It is clear from direct observation (or from Theorem \ref{T:C_S1-kernel}) that $h*S_p=h*S_q=0$.
On the other hand we claim that $h*\Psi_p=p\cdot h$ and $h*\Psi_q=q\cdot h$. This claim simply follows from the observation that
$$
h*\Psi_p(x)=\widetilde\Psi_p(\pi e_1)\,h(x)=p\cdot h(x),
$$
and similarly for $h*\Psi_q$.  Now suppose that the triple $(u_0,u_p,u_q)$ satisfy the compatibility condition \eqref{E:integercompatibility}.  Then the triple $(u_0,u_p+h,u_q+h)$ no longer satisfy \eqref{E:integercompatibility}, whereas they still satisfy \eqref{E:integercompatibility2}.  

We have shown that condition \eqref{E:integercompatibility} is strictly stronger than condition \eqref{E:integercompatibility2}.
This means of course that the original condition \eqref{E:integercompatibility2} is no longer sufficient to guarantee the existence of a wave with given snapshot data at $t=0,\,p,\,q$. 
On the other hand, it turns out that
\eqref{E:integercompatibility} is sufficient to guarantee the existence of such a wave.  

\begin{theorem}\label{T:psi-compatibility}
Suppose that $f_0,\,f_p,\,f_q\in \mathscr E(\mathbb R^n)$ satisfy the condition
\begin{equation}\label{E:integercompatibility3}
(f_p-f_0*S_p')*\Psi_q=(f_q-f_0*S_q')*\Psi_p.
\end{equation}
Then there exists a wave $u$ such that $u_0=f_0,\,u_p=f_p$, and $u_q=f_q$.
\end{theorem}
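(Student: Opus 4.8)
\emph{Proof plan.} The plan is to reduce the statement to the following: find $g\in\mathscr E(\mathbb R^n)$ satisfying the pair of convolution equations $g*S_p=f_p-f_0*S_p'$ and $g*S_q=f_q-f_0*S_q'$. Indeed, once such a $g$ exists, the convolution formula \eqref{E:conv-soln1} shows that $u(x,t)=(f_0*S_t')(x)+(g*S_t)(x)$ is a wave, and since $S_0'=\delta_0$ and $S_0=0$ its snapshots at $t=0,p,q$ are exactly $f_0,f_p,f_q$ (this is the $(0,p,q)$ analogue of Lemma \ref{T:equiv_u_g}; compare \eqref{E:intconvsoln}). Writing $h_p:=f_p-f_0*S_p'$ and $h_q:=f_q-f_0*S_q'$, the hypothesis \eqref{E:integercompatibility3} reads $h_p*\Psi_q=h_q*\Psi_p$, and the task is to produce $g\in\mathscr E(\mathbb R^n)$ with $g*S_p=h_p$ and $g*S_q=h_q$.

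The heart of the argument — and the one place where $\gcd(p,q)=1$ is used — is to show that $\delta_0$ lies in the ideal of the convolution ring $\mathscr E'(\mathbb R^n)$ generated by $\Psi_p$ and $\Psi_q$: that is, there exist $A,B\in\mathscr E'(\mathbb R^n)$ with $A*\Psi_p+B*\Psi_q=\delta_0$. By \eqref{E:psi-m} we have $\widetilde\Psi_p=U_{p-1}\circ\widetilde{S_1'}$ and $\widetilde\Psi_q=U_{q-1}\circ\widetilde{S_1'}$, where $\widetilde{S_1'}(\zeta)=\cos\sqrt{\zeta\cdot\zeta}$ is entire and bounded on $\mathbb R^n$. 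I would first observe that the Chebyshev polynomials $U_{p-1},U_{q-1}\in\mathbb C[t]$ have no common root: a common root is of the form $t_0=\cos\theta_0$ with $\theta_0\in(0,\pi)$ and $\sin(p\theta_0)=\sin(q\theta_0)=0$ while $\sin\theta_0\neq0$, which forces $\theta_0/\pi\in\tfrac1p\mathbb Z\cap\tfrac1q\mathbb Z=\mathbb Z$, impossible for $\theta_0\in(0,\pi)$. Hence $\gcd(U_{p-1},U_{q-1})=1$, and B\'ezout in $\mathbb C[t]$ yields polynomials $a,b$ with $a\,U_{p-1}+b\,U_{q-1}=1$. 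Substituting $t=\widetilde{S_1'}(\zeta)$ gives $(a\circ\widetilde{S_1'})\,\widetilde\Psi_p+(b\circ\widetilde{S_1'})\,\widetilde\Psi_q=1$; since $a\circ\widetilde{S_1'}$ and $b\circ\widetilde{S_1'}$ are entire of exponential type and bounded on $\mathbb R^n$, Paley--Wiener provides compactly supported distributions $A=a(S_1')$, $B=b(S_1')$ (convolution polynomials in $S_1'$) with the required identity. (Alternatively, one could prove a joint lower bound $|\widetilde\Psi_p(\zeta)|+|\widetilde\Psi_q(\zeta)|\ge C(1+|\zeta|)^{-N}e^{-N|\mathrm{Im}\,\zeta|}$ directly from the trigonometric expressions — using that $\sin pz$ and $\sin qz$ cannot be simultaneously small relative to $\sin z$ when $\gcd(p,q)=1$ — and then invoke Theorem \ref{T:Ehrenpreis-estimate}; but the B\'ezout route is more elementary and self-contained.)

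With $A,B$ in hand, I would set $\tilde g:=h_p*A+h_q*B\in\mathscr E(\mathbb R^n)$. A short convolution computation using $A*\Psi_p+B*\Psi_q=\delta_0$ and the compatibility relation $h_p*\Psi_q=h_q*\Psi_p$ shows $\tilde g*\Psi_p=h_p$ and $\tilde g*\Psi_q=h_q$. Since $\widetilde{S_1}$ restricted to $\mathbb R^n$ equals $\sin|\xi|/|\xi|$, which is slowly decreasing, Theorem \ref{T:CT-surjectivity} gives surjectivity of $C_{S_1}$ on $\mathscr E(\mathbb R^n)$, so we may choose $g\in\mathscr E(\mathbb R^n)$ with $g*S_1=\tilde g$. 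Finally, using $\Psi_p*S_1=S_p$ and $\Psi_q*S_1=S_q$ we obtain $g*S_p=\tilde g*\Psi_p=h_p$ and $g*S_q=\tilde g*\Psi_q=h_q$, which is precisely the pair of equations needed in the first paragraph, and assembling $u$ as above finishes the proof. The only substantive step is the joint invertibility of $(\Psi_p,\Psi_q)$; everything else is bookkeeping with convolutions together with the already-established surjectivity of $C_{S_1}$. (Note that uniqueness is \emph{not} claimed here, consistent with Proposition \ref{T:snapshot-uniqueness}, since $\ker C_{S_p}\cap\ker C_{S_q}$ is nonzero.)
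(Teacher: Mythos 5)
Your proof is correct, and its overall architecture matches the paper's: both arguments reduce everything to exhibiting $A,B\in\mathscr E'(\mathbb R^n)$ with $A*\Psi_p+B*\Psi_q=\delta_0$, after which the construction of $g$ via the surjectivity of $C_{S_1}$ and the verification $u_p=f_p$, $u_q=f_q$ is the same convolution bookkeeping. Where you genuinely differ is in how that key identity is produced. The paper starts from the integer B\'ezout relation $kp+lq=1$ and the addition formula $\sin(kpz)\cos(lqz)+\cos(kpz)\sin(lqz)=\sin z$; dividing by $\sin z$ and factoring $\sin(kpz)/\sin z$ through $\sin(pz)$ (and likewise for $q$) yields explicit cofactors $A*S_{lq}'$ and $B*S_{kp}'$. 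You instead observe that $\widetilde\Psi_p$ and $\widetilde\Psi_q$ are the polynomials $U_{p-1},U_{q-1}$ evaluated at the single entire function $\cos\sqrt{\zeta\cdot\zeta}$, check that these polynomials are coprime, and invoke B\'ezout in $\mathbb C[t]$. Your coprimality argument (no $\theta_0\in(0,\pi)$ with $\sin p\theta_0=\sin q\theta_0=0$) is exactly where $\gcd(p,q)=1$ enters for you, just as $kp+lq=1$ is where it enters for the paper; indeed the two identities are the same one in different clothing, since writing $t=\cos z$ and letting $T_m$ denote the Chebyshev polynomial of the first kind (so $\cos mz=T_m(\cos z)$), the paper's cofactors become the specific B\'ezout pair $a(t)=U_{k-1}(T_p(t))\,T_{lq}(t)$ and $b(t)=U_{l-1}(T_q(t))\,T_{kp}(t)$. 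What your version buys is transparency of the algebraic mechanism (the unit ideal is generated because the symbols are coprime polynomials in the single distribution $S_1'$) and the pleasant byproduct that $A,B$ are literally convolution polynomials in $S_1'$, so no Paley--Wiener input is needed; what the paper's version buys is fully explicit cofactors. Your closing remark that uniqueness is not claimed is consistent with the paper's Remark (a) following the theorem.
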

\begin{proof}
Since $p$ and $q$ are assumed relatively prime, there exist integers $k$ and $l$ such that $kp+lq=1$.  Our starting point is the resulting trigonometric relation
$$
\frac{\sin (kp\sqrt{\zeta\cdot\zeta})}{\sin \sqrt{\zeta\cdot\zeta}}\cos (lq\sqrt{\zeta\cdot\zeta})+\frac{\sin(lq\sqrt{\zeta\cdot\zeta})}{\sin \sqrt{\zeta\cdot\zeta}}\cos(kp\sqrt{\zeta\cdot\zeta})=1,\qquad (\zeta\in\mathbb C^n).
$$
Note that the fractions on the left hand side above are holomorphic functions of $\zeta$, and in fact they are the Fourier transforms of $\Psi_{kp}$ and $\Psi_{lq}$, respectively.  Taking inverse Fourier transforms, we obtain
$$
\Psi_{kp}*S_{lq}'+\Psi_{lq}*S_{kp}'=\delta_0.
$$
Now define the distributions $A$ and $B$ in $\mathscr E'(\mathbb R^n)$ by 
$$
\widetilde A(\zeta)=\frac{\sin (kp\sqrt{\zeta\cdot\zeta})}{\sin (p\sqrt{\zeta\cdot\zeta})}\quad\text{and}\quad \widetilde B(\zeta)=\frac{\sin(lq\sqrt{\zeta\cdot\zeta})}{\sin(q\sqrt{\zeta\cdot\zeta})}
$$
It is clear that $\Psi_{kp}=\Psi_p*A$ and $\Psi_{lq}=\Psi_q*B$. Hence
\begin{equation}\label{E:trig0}
\Psi_p*A*S_{lq}'+\Psi_q*B*S_{kp}'=\delta_0.
\end{equation}
Convolving both sides above with any function $g\in\mathscr E(\mathbb R^n)$ gives
\begin{equation}\label{E:trig1}
(g*\Psi_p)*A*S_{lq}'+(g*\Psi_q)*B*S_{kp}'=g.
\end{equation} 
Since $\Psi_p*S_1=S_p$ and $\Psi_q*S_1=S_q$, convolving both sides above yet again with $S_1$ then results in
$$
(g*S_p)*A*S_{lq}'+(g*S_q)*B*S_{kp}'=g*S_1.
$$
Now if there \emph{were} a wave $u(x,t)$ such that $u_0=f_0,\,u_p=f_p$, and $u_q=f_q$, and $g$ was its initial velocity (i.e., $g(x,0)=(\partial_t u(x,0)$), the above equation would give us
\begin{equation}\label{E:g-def1}
(f_p-f_0*S_p')*A*S_{lq}'+(f_q-f_0*S_q')*B*S_{kp}'=g*S_1.
\end{equation}

The idea then is to start with the left hand side of \eqref{E:g-def1}, which we note depends only on $(f_0,f_p,f_q)$, in order to obtain our wave.  Explicitly, consider any function $g\in\mathscr E(\mathbb R^n)$ which satisfies \eqref{E:g-def1}.  (Such a $g$ exists because $S_1$ is invertible.)  Let $u(x,t)$ be the wave with Cauchy data $u(x,0)=f_0(x),\,\partial_t u(x,0)=g(x)$.  We want to prove that $u_p=f_p$ and $u_q=f_q$, which would of course prove the theorem.

This turns out to be a straightforward calculation.  In fact, by \eqref{E:g-def1},
\begin{align*}
g*S_p&=(g*S_1)*\Psi_p\\
&=(f_p-f_0*S_p')*\Psi_p*A*S_{lq}'+(f_q-f_0*S_q')*\Psi_p*B*S_{kp}'
\end{align*}
The hypothesis \eqref{E:integercompatibility3} then shows that
\begin{align*}
g*S_p&=(f_p-f_0*S_p')*\Psi_p*A*S_{lq}'+(f_p-f_0*S_p')*\Psi_q*B*S_{kp}'\\
&=(f_p-f_0*S_p')*(\Psi_p*A*S_{lq}'+\Psi_q*B*S_{kp}')\\
&=f_p-f_0*S_p',
\end{align*}
with the last equality coming from \eqref{E:trig0}. The relation above then gives
$$
u_p-u_0*S_p=f_p-f_0*S_p,
$$ 
which, given that $u_0=f_0$, results in $u_p=f_p$.

An entirely similar calculation  shows that $u_q=f_q$. This finishes the proof.
\end{proof}

\begin{remark}\hfill
\begin{enumerate}[(a)]
\item In the proof above we note that the function $g$ satisfying \eqref{E:g-def1} is determined only up to $\ker(C_{S_1})$.  Thus there are infinitely many waves $u$ having snapshot data $(f_0,f_p,f_q)$ at $t=0,\,p,\,q$.  This is not so surprising since $u$ is determined only up to its snapshots at times $kp+lq$ for all integers $k$ and $l$; i.e., snapshots at all integer times.

\item Another natural way to obtain condition \eqref{E:integercompatibility3} (in a different form) relies on formula \eqref{E:intsnapshot} which provides integer time snapshots. Suppose that the triple $(f_0,f_p,f_q)$ in $\mathscr{E}(\mathscr{\mathbb R}^n)$ are snapshots of a wave $u(x,t)$ at $t=0,\,p,\,q$, we deduce from \eqref{E:intsnapshot} that 
\begin{equation*}
    \begin{aligned}
    f_1*\Psi_p &= f_p+f_0*\Psi_{p-1}; \\
    f_1*\Psi_q &= f_q+f_0*\Psi_{q-1}.
    \end{aligned}
\end{equation*}
These relations imply that
\begin{equation}\label{E:integercompatibility3'}
    (f_p+f_0*\Psi_{p-1})*\Psi_q = (f_q+f_0*\Psi_{q-1})*\Psi_p.
\end{equation}
It is not hard to prove that \eqref{E:integercompatibility3} and \eqref{E:integercompatibility3'} are equivalent, while the key point is the identity
$$ 
(\Psi_{p-1}+S'_p)*\Psi_q = (\Psi_{q-1}+S'_q)*\Psi_p,
$$
which can be shown by taking the Fourier transform and using standard trigonometric identities. Consequently, Condition \eqref{E:integercompatibility3} is also a sufficient and necessary condition for $(f_0,f_p,f_q)$ being snapshots of a wave $u(x,t)$ at $t=0,\,p,\,q$. 

\item We note that just as with \eqref{E:compatibility-three}, we can reformulate the compatibility condition \eqref{E:integercompatibility3} equivalently as
$$
f_0*\Psi_{q-p} + f_q*\Psi_p + f_p*\Psi_{-q}=0.
$$
Convolving both sides above with $S_1$ yields \eqref{E:compatibility-three}.

\end{enumerate}
\end{remark}

\subsection{Support Considerations.} 
\ \\ \newline
Assume that $\alpha$ is irrational.  Regardless of whether $\alpha$ is a Liouville number. it can be shown directly that the compatibility condition \eqref{E:compatibility} is sufficient for the existence of a wave with snapshots $f_0,\,f_1,\,f_\alpha$ at times $t=0,\,1,\,\alpha$, respectively,  as long as these functions are compactly supported.

\begin{theorem}\label{T:snapshot-compact-support}
    Suppose that $\alpha$ is irrational, and $f_0,\,f_1,\,f_\alpha\in\mathscr D(\mathbb R^n)$ satisfy the compatibility condition \eqref{E:compatibility}:
    $$ 
    (f_1-f_0*S_1')*S_\alpha=(f_\alpha-f_0*S_\alpha')*S_1, 
    $$
    Then there exists a unique wave $u(x,t)$ such that $u_t\in\mathscr D(\mathbb R^n)$ for all $t\in\mathbb R$ and such that $u_0=f_0,\,u_1=f_1$, and $u_\alpha=f_\alpha$.
\end{theorem}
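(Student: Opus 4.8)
The plan is to dispose of uniqueness at once and then reduce existence to two of the equivalent conditions in Ehrenpreis' Theorem~\ref{T:CT-surjectivity}. Uniqueness is immediate: since $\alpha$ is irrational, Proposition~\ref{T:snapshot-uniqueness} already shows that \emph{any} wave is determined by the triple $(u_0,u_1,u_\alpha)$ of its snapshots at times $0,1,\alpha$, so there is at most one wave with snapshots $(f_0,f_1,f_\alpha)$, with or without the compact-support requirement. It therefore suffices to produce one wave having these three snapshots and with all snapshots in $\mathscr D(\mathbb R^n)$. By Lemma~\ref{T:equiv_u_g} this amounts to finding $g\in\mathscr E(\mathbb R^n)$ satisfying the system \eqref{E:existenceofg}; moreover if such a $g$ can be taken in $\mathscr D(\mathbb R^n)$, then $u(x,t)=f_0*S_t'+g*S_t$ automatically has every snapshot in $\mathscr D(\mathbb R^n)$, because $f_0$, $g$ and the distributions $S_t$, $S_t'$ are all compactly supported. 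Set $h_1=f_1-f_0*S_1'$ and $h_\alpha=f_\alpha-f_0*S_\alpha'$, which lie in $\mathscr D(\mathbb R^n)$; the compatibility hypothesis \eqref{E:compatibility} says $h_1*S_\alpha=h_\alpha*S_1$, i.e.\ $\widetilde h_1\,\widetilde S_\alpha=\widetilde h_\alpha\,\widetilde S_1$ as entire functions on $\mathbb C^n$.

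The one substantive step is to verify that $\widetilde h_1/\widetilde S_1$ is an \emph{entire} function. Recall from the proof of Theorem~\ref{T:C_S1-kernel} that the zero set of $\widetilde S_1$ is the disjoint union of the reduced embedded hypersurfaces $V_j=\{\zeta\in\mathbb C^n:\zeta\cdot\zeta=\pi^2 j^2\}$, $j\geq1$, along each of which $\widetilde S_1$ vanishes to first order. On $V_j$ the function $\widetilde S_\alpha$ equals the constant $\sin(\pi j\alpha)/(\pi j)$, which is nonzero precisely because $\alpha$ is irrational; hence the identity $\widetilde h_1\,\widetilde S_\alpha=\widetilde h_\alpha\,\widetilde S_1$ forces $\widetilde h_1$ to vanish on every $V_j$. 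Since $\widetilde S_1$ has a simple zero along the smooth hypersurface $V_j$ and $\widetilde h_1$ vanishes on $V_j$, divisibility of $\widetilde h_1$ by $\widetilde S_1$ in a neighborhood of $V_j$ follows from the holomorphic Hadamard lemma, exactly as in the proof of Theorem~\ref{T:C_S1-kernel}; away from $\bigcup_j V_j$ the quotient is holomorphic trivially. Thus $G:=\widetilde h_1/\widetilde S_1$ is entire.

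With this in hand the rest is bookkeeping. The distribution $h_1$ lies in $\mathscr E'(\mathbb R^n)$, $S_1$ is invertible (its transform $\widetilde S_1(\xi)=\sin|\xi|/|\xi|$ is slowly decreasing, as noted around \eqref{E:initvelocitycond}), and $G=\widetilde h_1/\widetilde S_1$ is entire, so condition (e) of Theorem~\ref{T:CT-surjectivity} yields $g\in\mathscr E'(\mathbb R^n)$ with $\widetilde h_1=\widetilde S_1\cdot\widetilde g$, i.e.\ $g*S_1=h_1$. Since $g*S_1=h_1\in C^\infty(\mathbb R^n)$ and $S_1$ is invertible, condition (f) of the same theorem gives $g\in C^\infty(\mathbb R^n)$; being compactly supported as well, $g\in\mathscr D(\mathbb R^n)$. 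Finally, multiplying $\widetilde g\,\widetilde S_1=\widetilde h_1$ by $\widetilde S_\alpha$ and using $\widetilde h_1\,\widetilde S_\alpha=\widetilde h_\alpha\,\widetilde S_1$ gives $(\widetilde g\,\widetilde S_\alpha-\widetilde h_\alpha)\,\widetilde S_1=0$, whence $\widetilde g\,\widetilde S_\alpha=\widetilde h_\alpha$ since $\widetilde S_1\not\equiv 0$; that is, $g*S_\alpha=h_\alpha$. So $g\in\mathscr D(\mathbb R^n)$ solves \eqref{E:existenceofg}, and $u(x,t)=f_0*S_t'+g*S_t$ is a $C^\infty$ wave by \eqref{E:conv-soln1}, with each $u_t\in\mathscr D(\mathbb R^n)$ and $u_0=f_0$, $u_1=f_0*S_1'+h_1=f_1$, $u_\alpha=f_\alpha$.

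I expect the only real obstacle to be the entirety of $\widetilde h_1/\widetilde S_1$, and the point worth stressing is that, unlike in the $\mathscr E(\mathbb R^n)$ theory of Section~3, it carries no Liouville restriction on $\alpha$: one merely divides one entire function by another, the necessary cancellation of zeros being furnished directly by the compatibility condition, so no quantitative minimum-modulus bound of the sort appearing in Theorem~\ref{T:fund-estimate} (which fails for Liouville $\alpha$) is needed.
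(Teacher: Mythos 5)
Your proposal is correct and follows essentially the same route as the paper: uniqueness via Proposition \ref{T:snapshot-uniqueness}, then Fourier transforming the compatibility condition, using irrationality of $\alpha$ to see that $\widetilde S_1$ and $\widetilde S_\alpha$ have no common zeros so that $\widetilde h_1/\widetilde S_1$ is entire, and invoking parts (e) and (f) of Theorem \ref{T:CT-surjectivity} to obtain $g\in\mathscr D(\mathbb R^n)$ with $g*S_1=h_1$ and $g*S_\alpha=h_\alpha$. The only cosmetic difference is that the paper reads both convolution identities off from condition (e) at once via the single entire quotient $\widetilde h_1/\widetilde S_1=\widetilde h_\alpha/\widetilde S_\alpha$, whereas you derive $g*S_\alpha=h_\alpha$ afterwards from the Fourier-side identity; your closing observation that no Liouville condition is needed here matches the paper's intent exactly.
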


\begin{proof}
Since $\alpha$ is irrational, uniqueness follows from Proposition \ref{T:snapshot-uniqueness}. For existence, we note that the Fourier transform of both sides of \eqref{E:compatibility} are well-defined.  If we put $\varphi_1=f_1-f_0*S_1'$ and $\varphi_\alpha=f_\alpha-f_0*S'_\alpha$, we obtain
$$
\widetilde\varphi_1\,\widetilde S_\alpha=\widetilde\varphi_\alpha\,\widetilde S_1.
$$
Since $\widetilde S_1$ and $\widetilde S_\alpha$ have no common zeros, the above equality shows that $\widetilde\varphi_1$ (respectively $\widetilde \varphi_\alpha$) vanishes on the zero set of $\widetilde S_1$ (respectively $\widetilde S_\alpha$).  An argument similar to that in the proof of Theorem \ref{T:C_S1-kernel} then shows that $\widetilde\varphi_1/\widetilde S_1 = \widetilde\varphi_\alpha/\widetilde S_\alpha$ is a holomorphic function on $\mathbb C^n$.

From Theorem \ref{T:CT-surjectivity}(e), we deduce that there is a distribution $T\in\mathscr E'(\mathbb R^n)$ such that
\begin{equation}\label{E:convol-soln}
\varphi_1=T*S_1,\qquad \varphi_\alpha=T*S_\alpha.
\end{equation}
Since $S_1$ is invertible, Theorem \ref{T:CT-surjectivity}(f) shows that in fact $T\in \mathscr D(\mathbb R^n)$.  Renaming $T$ as $g$, \eqref{E:convol-soln} now reads as
$$
\varphi_1=g*S_1,\qquad \varphi_\alpha=g*S_\alpha.
$$
If $u(x,t)$ is the wave with Cauchy data $u(x,0)=f_0(x),\;\partial_t u(x,0)=g(x)$, then the convolution solution \eqref{E:conv-soln1} shows that $u_1=f_1$ and $u_\alpha=f_\alpha$.  

We observe that because $g$ has compact support, \eqref{E:conv-soln1} also shows that $u_t\in\mathscr  D(\mathbb R^n)$ for all $t\in\mathbb R$.
\end{proof}




Theorem \ref{T:snapshot-compact-support} has the following application. If  $u(x,t)$ is a wave such that $u_0$ and $u_r$ belong to $\mathscr  D(\mathbb R^n)$ for a rational number $r \neq 0$, then we can only guarantee that $u_t$ belongs to $\mathscr  D(\mathbb R^n)$ for $t$ a multiple of $r$.  Indeed, the theorem above says that if $u_t\in\mathscr D(\mathbb R^n)$ for some irrational number $t$, then $u_t\in\mathscr D(\mathbb R^n)$ for all $t$;  this implies that if $u_t\in\mathscr E(\mathbb R^n)\setminus \mathscr  D(\mathbb R^n)$ for some irrational $t$, then $u_t\in \mathscr E(\mathbb R^n)\setminus \mathscr  D(\mathbb R^n)$ for all irrational $t$.

A simple illustration is given by the function $u(x,t)=\sin(\pi t)\,e^{i \pi\zeta\cdot x}$ where $\zeta\in\mathbb C^n$ satisfies $\zeta\cdot\zeta=1$.  The function $u$ clearly satisfies the wave equation, and it is also clear that $u_m\equiv 0$ for all $m\in\mathbb Z$; however  $u_t\notin\mathscr  D(\mathbb R^n)$ for all $t\notin\mathbb Z$.

\section{Extensions to Noncompact Symmetric Spaces}

Our results extend to shifted wave equations on noncompact symmetric spaces, with certain modifications to some proofs in \S3.  Let $X=G/K$ be a noncompact Riemannian symmetric space, where $G$ is a real noncompact connected semisimple Lie group with finite center, and $K$ is a maximal compact subgroup of $G$.  We let $o$ denote the identity coset $\{K\}$ in $X$.  We will adopt the symmetric space notation in Helgason's books \cite{DS}, \cite{GGA}, and \cite{GASS}, and we will summarize the background we need below.  

The Lie algebra $\mathfrak{g}$ of $G$ has Cartan decomposition $\mathfrak g=\mathfrak k+\mathfrak p$.  The standard  $G$-invariant Riemannian structure on $X$ is the one obtained by left-translating the Killing form $\langle\phantom{x},\phantom{y}\rangle$ on  $\mathfrak p$, which can be identified with the tangent space $T_oX$.

Let $\mathfrak a$ be a maximal abelian subspace of $\mathfrak p$, and let $\Sigma$ be the resulting set of restricted roots of $\mathfrak g$ with respect to $\mathfrak a$.  For each $\alpha\in\Sigma$, let $\mathfrak g_\alpha$ be the associated restricted root space, and let $m_\alpha\;( =\dim \mathfrak g_\alpha)$ denote its multiplicity.  Fix a Weyl chamber $\mathfrak a^+$ in $\mathfrak a$, let $\Sigma^+$ denote the corresponding set of positive restricted roots.  Then $\mathfrak n=\sum_{\alpha\in\Sigma^+} \mathfrak g_\alpha$ is a nilpotent Lie subalgebra of $\mathfrak g$.  If $A$ and $N$ are the analytic subgroups of $G$ with Lie algebras $\mathfrak{a}$ and $\mathfrak n$, respectively, then we have the Iwasawa decomposition $G=NAK$.

For each $g\in G$, we accordingly write $g=n(g)\,\exp A(g)\,k(g)$, where $n(g)\in N,\;A(g)\in\mathfrak a$, and $k(g)\in K$.  A \emph{horocycle} in $X$ is a translate of an $N$-orbit in $X$.  The left action of the group $G$ is transitive on the set of all horocycles, and the isotropy subgroup at the horocycle $\xi_0=N\cdot o$ is $MN$, where $M$ is the centralizer of $A$ in $K$. Thus the set $\Xi$ of all horocycles can be identified with the homogeneous space $G/MN$, which is diffeomorphic to the product manifold $K/M\times A$, via the map $(kM,a)\mapsto ka\cdot\xi_0$.  For the horocycle $\xi=ka\cdot\xi_0$, the element $kM\in K/M$ is called its \emph{normal}, and the element $\log a\in \mathfrak a$ is called the \emph{directed distance} from the identity coset $o$ in $X$ to the horocycle $\xi$.  This directed distance can be identified with $\mathbb R$ in case $X$ has rank one; i.e., $\dim A=1$.

If we fix $kM\in K/M$, then each $x\in X$ lies in a unique horocycle with normal $kM$; we denote the directed distance from $o$ to this horocycle by $A(x,kM)$.  For each $g\in G$, we then have $A(g\cdot o,kM)=A(k^{-1}g)$.

Let $M'$ denote the normalizer of $A$ in $K$.  Then the Weyl group $W$ corresponding to the root system $\Sigma$ is isomorphic (via the adjoint representation of $M'$ on $\mathfrak a$) to the quotient group $M'/M$.  The dual space $\mathfrak a^*$ can be identified with $\mathfrak a$ via the Killing form, and in this way $W$ acts on $\mathfrak a^*$ and the complexified dual $\mathfrak a^*_\mathbb C$.  We extend the Killing form $\langle\phantom{x},\phantom{y}\rangle$ to $\mathfrak a^*$ by duality, and we denote its extension to $\mathfrak a^*_\mathbb C$ also by $\langle\phantom{x},\phantom{y}\rangle$.

The \emph{Fourier transform} of any $f\in\mathscr D(X)$ is the function $\widetilde f$ on $\mathfrak a^*_{\mathbb C}\times K/M$ given by
\begin{equation}\label{E:fouriertransform1}
\widetilde f(\lambda,kM)=\int_X f(x)\,e^{(-i\lambda+\rho)A(x,kM)}\,dx.
\end{equation}
In the above, $dx$ is the Riemannian measure on $X$ and we have put $\rho=(1/2)\,\sum_{\alpha\in\Sigma^+} m_\alpha\,\alpha$.  If $f$ is left $K$-invariant, then $\widetilde f$ is constant in the second argument, and \eqref{E:fouriertransform1} reduces to the \emph{spherical Fourier transform}
\begin{equation}\label{E:sphericaltransform1}
\widetilde f(\lambda)=\int_X f(x)\,\varphi_{-\lambda} (x)\,dx,
\end{equation}
where $\varphi_\lambda$ is the spherical function corresponding to the spectral parameter $\lambda\in\mathfrak a^*_\mathbb C$, given by
$$
\varphi_\lambda(x)=\int_K e^{(i\lambda+\rho)A(x,kM)}\,dk,\qquad\qquad x\in X.
$$
Let $\mathscr E'(X)$ denote the space of compactly supported distributions on $X$, and let $\mathscr E'_K(X)$ denote the subspace of $\mathscr E'(X)$ consisting of left $K$-invariant distributions.  The spherical Fourier transform of any $S\in\mathscr E'_K(X)$ is defined by
\begin{equation}\label{E:sphericaltransform2}
\widetilde S(\lambda)=\int_X \varphi_{-\lambda}(x)\,dS(x),\qquad\qquad \lambda\in\mathfrak a^*_\mathbb C.
\end{equation}
There are Paley-Wiener theorems pertaining to the Fourier transforms \eqref{E:fouriertransform1} -- \eqref{E:sphericaltransform2} (see \cite{GGA}, Ch.~IV and \cite{GASS}, Ch.~III).  The one that is relevant to us is the following.
\begin{theorem}\label{T:K-inv-paleywiener}
For any $R>0$, the spherical Fourier transform $S\mapsto\widetilde S$ is a linear bijection from the subspace of $\mathscr E'_K(X)$ consisting of distributions supported in the closed ball $\overline B_R(o)$ and the vector space of $W$-invariant entire functions $F$ on $\mathfrak a^*_{\mathbb C}$ for which there is a constant $C$ and an integer $N$ such that
\begin{equation}\label{E:exponentialtype1}
|F(\lambda)|\leq C (1+|\lambda|)^N\,e^{R|\mathrm{Im}\lambda|},\qquad\qquad \lambda\in\mathfrak a^*_{\mathbb C}.
\end{equation}
\end{theorem}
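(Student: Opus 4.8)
The plan is to establish the two halves of the bijection by quite different means: the direct implication (every $\widetilde S$ has the asserted growth) is elementary, while the converse (every such $F$ is a $\widetilde S$), together with injectivity, is reduced by a mollification to the smooth spherical Paley--Wiener theorem of Helgason and Gangolli (\cite{GGA}, Ch.~IV; \cite{GASS}, Ch.~III), which I will use as a black box.

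For the direct implication, take $S\in\mathscr E'_K(X)$ supported in $\overline B_R(o)$ and set $\widetilde S(\lambda)=S(\varphi_{-\lambda})$. Since $\varphi_\mu$ depends on $\mu$ only through its $W$-orbit, $\widetilde S$ is $W$-invariant; and since $(\lambda,x)\mapsto\varphi_{-\lambda}(x)=\int_K e^{(-i\lambda+\rho)(A(x,kM))}\,dk$ is entire in $\lambda$, smooth in $x$, with $x$-derivatives holomorphic in $\lambda$ and locally bounded, $\widetilde S$ is entire. For the bound I would use that $S$ has a finite order $m$, so that $|\widetilde S(\lambda)|\le C\sum_{|\beta|\le m}\sup_{x\in\overline B_R(o)}|D_x^\beta\varphi_{-\lambda}(x)|$; differentiating under the $dk$-integral, $D_x^\beta\varphi_{-\lambda}(x)$ is a finite sum of terms of the form (polynomial in $\lambda$ of degree $\le|\beta|$)$\times$(smooth function of $x$, bounded on $\overline B_R(o)$ uniformly in $kM$)$\times\,e^{(-i\lambda+\rho)(A(x,kM))}$, and $|e^{(-i\lambda+\rho)(A(x,kM))}|=e^{(\mathrm{Im}\lambda+\rho)(A(x,kM))}\le e^{(|\mathrm{Im}\lambda|+|\rho|)\,d(o,x)}\le e^{R|\rho|}e^{R|\mathrm{Im}\lambda|}$ on the ball, where I invoke the classical inequality $\|A(x,kM)\|\le d(o,x)$. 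This yields $|\widetilde S(\lambda)|\le C'(1+|\lambda|)^m e^{R|\mathrm{Im}\lambda|}$, i.e.\ \eqref{E:exponentialtype1} with $N=m$.

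For the converse, let $F$ be $W$-invariant and entire with $|F(\lambda)|\le C(1+|\lambda|)^N e^{R|\mathrm{Im}\lambda|}$. Because the Plancherel density $|c(\lambda)|^{-2}$ grows at most polynomially on $\mathfrak a^*$, I would first show that
\[
S(f):=\gamma\int_{\mathfrak a^*}\widetilde{f^\natural}(-\lambda)\,F(\lambda)\,|c(\lambda)|^{-2}\,d\lambda\qquad(f\in\mathscr D(X)),
\]
where $\gamma$ is the spherical inversion constant and $f^\natural$ the $K$-average of $f$, defines an element $S\in\mathscr D'_K(X)$: for $f$ supported in a fixed ball, $\widetilde{f^\natural}$ is rapidly decreasing with Schwartz seminorms dominated by Fr\'echet seminorms of $f$, so the integral converges absolutely and continuously in $f$, and $S$ is $K$-invariant by construction. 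The spherical inversion formula then shows that the distributional spherical transform of $S$ is exactly $F$. The crux is to prove $\mathrm{supp}\,S\subset\overline B_R(o)$. For this I would mollify: fix a $K$-invariant approximate identity $\chi_\varepsilon\in\mathscr D_K(X)$ with $\mathrm{supp}\,\chi_\varepsilon\subset\overline B_\varepsilon(o)$, and verify, from the definition of $S$ and the convolution theorem for the spherical transform, that $S*\chi_\varepsilon$ coincides with the function $g_\varepsilon\in\mathscr D_K(X)$ whose spherical transform is $F\cdot\widetilde\chi_\varepsilon$; since $F\widetilde\chi_\varepsilon$ is $W$-invariant, rapidly decreasing, and of exponential type $R+\varepsilon$, the smooth spherical Paley--Wiener theorem guarantees that $g_\varepsilon$ exists and is supported in $\overline B_{R+\varepsilon}(o)$. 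Letting $\varepsilon\to0$ gives $S*\chi_\varepsilon\to S$ in $\mathscr D'(X)$, whence $\mathrm{supp}\,S\subset\bigcap_{\varepsilon>0}\overline B_{R+\varepsilon}(o)=\overline B_R(o)$; thus $S\in\mathscr E'_K(X)$ and $\widetilde S=F$. Finally, injectivity of $S\mapsto\widetilde S$ on $\mathscr E'_K(X)$ is clear: if $\widetilde S\equiv0$ then $S$ kills every $\varphi_\lambda$, hence---by the inversion formula, whose defining integral converges in $\mathscr E(X)$---every $f\in\mathscr D_K(X)$, so $S=0$.

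The step I expect to be the genuine obstacle is the localization $\mathrm{supp}\,S\subset\overline B_R(o)$ with the \emph{sharp} radius $R$: recovering exponential type $R$ as support in the ball of radius exactly $R$, rather than $R+\varepsilon$, is the delicate point, and it is precisely here that the limiting argument over the approximate identity is essential---exactly as in the classical Euclidean Paley--Wiener--Schwartz theorem. All the other ingredients (the elementary growth estimates for $\varphi_{-\lambda}$ and its derivatives, the spherical inversion and convolution theorems, the continuity of $f\mapsto\widetilde{f^\natural}$, the polynomial bound on $|c(\lambda)|^{-2}$) are standard, and the whole argument presupposes the smooth spherical Paley--Wiener theorem, which is the deep input I am not reproving here.
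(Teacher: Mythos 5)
Your proposal is correct in outline, but note that the paper itself offers no proof of this statement: it is quoted as a known Paley--Wiener theorem with a citation to \cite{GGA}, Ch.~IV and \cite{GASS}, Ch.~III, where the distributional version (due to Eguchi--Hashizume--Okamoto and Helgason) is established. What you have written is essentially the standard derivation of the distributional theorem from the smooth one, and the structure is sound: the forward estimate via the finite order of $S$ and the inequality $|A(x,kM)|\le d(o,x)$; the definition of $S$ by the inversion pairing, which converges by the polynomial growth of $F$ on $\mathfrak a^*$ and of $|c(\lambda)|^{-2}$; the identification $S*\chi_\varepsilon=g_\varepsilon$ with $\widetilde g_\varepsilon=F\,\widetilde\chi_\varepsilon$ (which is rapidly decreasing of exponential type $R+\varepsilon$, so the smooth theorem localizes its support); and the limit $\varepsilon\to0$ to get the sharp radius $R$. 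Two small points to tighten. First, you assert $\widetilde S=F$ before you have shown $S$ is compactly supported, but $S(\varphi_{-\lambda})$ is only defined once $S\in\mathscr E'(X)$; the clean order is to establish $\operatorname{supp}S\subset\overline B_R(o)$ first and then obtain $\widetilde S(\lambda)=\lim_{\varepsilon\to0}\widetilde g_\varepsilon(\lambda)=F(\lambda)$ from $\widetilde\chi_\varepsilon\to1$. Second, in the injectivity argument you conclude $S=0$ from the vanishing of $S$ on the $K$-invariant test functions; you should add the (one-line) observation that a $K$-invariant distribution satisfies $S(f)=S(f^\natural)$, so that killing the $K$-invariant test functions kills all of $\mathscr D(X)$. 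Neither is a genuine gap; with those adjustments, and granting the smooth spherical Paley--Wiener theorem as you do, the argument is a complete and correct proof of a result the paper only cites.
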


We will also need to recall a few details about convolutions on $G/K$.  For this we just need to assume that $X=G/K$ is any homogeneous space, with $G$ unimodular and $K$ compact.  (For details on the convolution calculus on $G/K$, see \cite{GGA}, Chapter II.)  Suppose that $f\in \mathscr E(X)$ and $S\in \mathscr E'_K(X)$.  The convolution $f*S$ is defined by ``pulling back" the convolution to the group $G$.  To be precise, let $s$ denote the  pullback of $S$ to the group $G$.  This means that $s$ is the distribution on $G$ defined by
$$
s(\psi)=S(\psi^\natural),\qquad\qquad \psi\in \mathscr E(G).
$$
In the above, $\psi^\natural$ is the ``push-down''  of the function $\psi$ to $X$ given by right-averaging $\psi$ over $K$:
$$
\psi^\natural (u\cdot o)=\int_K \psi(uk)\,dk, \qquad u\in G.
$$
  Then $\psi^\natural\in\mathscr E(X)$, and the map $\psi\mapsto\psi^\natural$ is a continuous linear map from $\mathscr E(G)$ onto $\mathscr E(X)$ . By definition, the convolution $f*S$ is the function on $X$  given by
\begin{equation}\label{E:homogeneous-space-convolution}
(f*S) (u\cdot o)=\int_{G}  f(uv^{-1}\cdot o)\,ds(v),\qquad u\in G.
\end{equation}
It is easily ascertained that $f*S\in\mathscr E(X)$, and it can be shown that the right convolution operator $C_S\colon f\mapsto f*S$ is continuous from $\mathscr E(X)$ to $\mathscr E(X)$.  (The proof of this is a simple consequence of the closed graph theorem -- see, e.g., \cite{RudinFA}, Theorem 6.33.)

If $T\in\mathscr D'(X)$ and $S\in\mathscr E_K'(X)$, then their convolution $T*S$ is the distribution on $X$ defined as follows.  Let $\widecheck S$ be the distribution on $X$ defined by
$$
\widecheck S(\varphi)=\int_G \varphi(g^{-1}\cdot o)\,ds(g), \qquad\qquad \varphi\in\mathscr E(X).
$$
Then $\widecheck S\in\mathscr E'_K(X)$ and the map $S\mapsto\widecheck S$ is a linear homeomorphism of $\mathscr E_K'(X)$ onto itself.  We define $T*S$ by
\begin{equation}\label{E:homogeneous-space-convolution-2}
(T*S)(f)=T(f*\widecheck S),\qquad\qquad f\in\mathscr D(X),
\end{equation}
so that
$$
(T*S)(f)=\int_{G/K}\int_{G/K} f(ghK)\,dT(gK)\,dS(hK).
$$
It is not hard to prove that $S*T=T*S$ whenever $S,\,T\in\mathscr E'_K(X)$.  (See \cite{GGA}, Ch.~II, \S5.)

On our noncompact Riemannian symmetric space $X=G/K$, right convolution with $S$ corresponds to multiplication with $\widetilde S$ in the Fourier domain:
\begin{equation}\label{E:fourierconv}
(f*S)^\sim(\lambda,kM)=\widetilde f(\lambda, kM)\,\widetilde S(\lambda),
\end{equation}
for all $\lambda\in\mathfrak a^*_\mathbb C$ and $kM\in K/M$.  (See \cite{GASS}, Ch.~III, Sec.\,1.)  

Let $l=\dim \mathfrak a$.  Then the dual space $\mathfrak a^*_\mathbb C$ can be identified with $\mathbb C^l$ by fixing a basis of $\mathfrak a^*$. Using this identification, we can then use Definition \ref{D:slow-decrease} to define what it means for a function on $\mathfrak a^*_\mathbb C$ to be slowly decreasing.  It is easy to see that this definition is independent of the choice of basis of $\mathfrak a^*$.  Ehrenpreis' surjectivity result has the following counterpart for convolutions on noncompact symmetric spaces.
\begin{theorem}\label{T:symconvsurjectivity}
Let $S\in\mathscr E_K'(X)$. Then the convolution operator $C_S\colon \mathscr E(X)\to\mathscr E(X)$ is surjective if and only if $\widetilde S$ is slowly decreasing in $\mathfrak a^*_\mathbb C$.
\end{theorem}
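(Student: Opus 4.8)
The plan is to transport the problem, via the Fourier transform \eqref{E:fouriertransform1}, to the space $\mathfrak a^*_{\mathbb C}\cong\mathbb C^l$ and to reduce it to Ehrenpreis' Euclidean result, Theorem \ref{T:CT-surjectivity}. Relative to $\mathbb R^n$ there are two genuinely new features: the finite Weyl group $W$, which acts on $\mathfrak a^*_{\mathbb C}$ and constrains the image of the Fourier transform, and the fact that $\mathscr E(X)$ is not spanned by its $K$-invariant part. Both will be handled by averaging, together with Theorem \ref{T:K-inv-paleywiener}. Throughout I would use the multiplication rule \eqref{E:fourierconv}, the commutativity $S*T=T*S$ for $S,T\in\mathscr E'_K(X)$, and the fact that right convolution by a $K$-invariant distribution commutes with the left $G$-action, so that $C_S$ preserves every left-$K$-isotypic subspace of $\mathscr E(X)$, in particular $\mathscr E_K(X)$.

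For \emph{sufficiency}, assume $\widetilde S$ is slowly decreasing on $\mathfrak a^*_{\mathbb C}$, so in particular $\widetilde S\not\equiv 0$, and argue by duality: since $\mathscr E(X)$ is a Fréchet space, $C_S$ is surjective if and only if its transpose — convolution by $\widecheck S$ on $\mathscr E'(X)$ — is injective with weak$^*$-closed range. Injectivity is immediate: $\widecheck S*T=0$ forces $\widetilde{\widecheck S}(\lambda)\,\widetilde T(\lambda,kM)\equiv 0$, and since $\widetilde{\widecheck S}\not\equiv 0$ while the entire functions on $\mathbb C^l$ form an integral domain, this gives $\widetilde T(\cdot,kM)\equiv 0$ for every $kM$, hence $T=0$. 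For the closed-range statement, take a weak$^*$-convergent sequence $\widecheck S*T_j\to R$: by Banach--Steinhaus it is equicontinuous, hence uniformly supported in some ball $\overline B_{R_0}(o)$ with bounded order, so $R$ is too, and the entire functions $\lambda\mapsto(\widecheck S*T_j)^{\sim}(\lambda,kM)=\widetilde{\widecheck S}(\lambda)\,\widetilde T_j(\lambda,kM)$ converge locally uniformly to $\widetilde R(\cdot,kM)$. Each such function is divisible by $\widetilde{\widecheck S}$, so $\widetilde R(\cdot,kM)$ vanishes on the zero variety of $\widetilde{\widecheck S}$ to the corresponding order, whence $Q(\lambda,kM):=\widetilde R(\lambda,kM)/\widetilde{\widecheck S}(\lambda)$ is entire in $\lambda$. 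The slow decrease of $\widetilde{\widecheck S}$, through Ehrenpreis' minimum modulus theorem and the division estimates underlying Theorem \ref{T:CT-surjectivity} (see also \cite{Ho2}, Ch.\ XVI), should then show that $Q(\cdot,kM)$ has exponential type at most $R_0$ and polynomial growth on $\mathfrak a^*$, uniformly in $kM$; and since $\widetilde{\widecheck S}$ is $W$-invariant and independent of $kM$, the quotient $Q$ inherits from $\widetilde R$ the Weyl-group side conditions of the Fourier Paley--Wiener theorem. Hence $Q=\widetilde T$ for some $T\in\mathscr E'(X)$ with $\widecheck S*T=R$, so the range is weak$^*$-closed and $C_S$ is surjective. (Alternatively one could first settle this on $\mathscr E_K(X)$ using only Theorem \ref{T:K-inv-paleywiener}, then bootstrap to all $K$-types; I expect that bootstrap — controlling how a $K$-invariant distribution acts on each $K$-type of a spherical principal series — to be delicate, which is why I would prefer the route above.)

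For \emph{necessity}, suppose $C_S$ is surjective on $\mathscr E(X)$. Given $g\in\mathscr E_K(X)$ and $f$ with $f*S=g$, the left $K$-average $f^\natural$ again satisfies $f^\natural*S=g$, so $C_S$ is surjective already on the Fréchet space $\mathscr E_K(X)$. Via the Abel transform — a topological isomorphism $\mathscr E_K(X)\xrightarrow{\ \sim\ }\mathscr E(\mathfrak a)^W$ that intertwines $C_S$ with $C_{\mathcal A S}$ and satisfies $\widetilde S=\widehat{\mathcal A S}$ (the Euclidean Fourier transform on $\mathfrak a\cong\mathbb R^l$) — this becomes surjectivity of $C_{\mathcal A S}$ on $\mathscr E(\mathbb R^l)^W$. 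To conclude that $\widehat{\mathcal A S}=\widetilde S$ is then slowly decreasing, I would adapt Ehrenpreis' proof of the implication (a)$\Rightarrow$(d) in Theorem \ref{T:CT-surjectivity} to the $W$-invariant subspace: were $\widehat{\mathcal A S}$ not slowly decreasing, then — along the lines of the proof of Lemma \ref{T:liouville1}, with the obstructing test functions $W$-symmetrized, which is possible because $\widehat{\mathcal A S}$ is $W$-invariant — one obtains a sequence tending to $0$ in $\mathscr E(\mathbb R^l)^W$ with no uniformly bounded preimages, contradicting the openness of the surjection $C_{\mathcal A S}$. Hence $\widetilde S$ is slowly decreasing.

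The hardest step will be the transport of Ehrenpreis' division and minimum-modulus machinery from $\mathbb C^n$ to $\mathfrak a^*_{\mathbb C}$: one must check that dividing a Paley--Wiener function by the slowly decreasing Paley--Wiener function $\widetilde S$ again yields a Paley--Wiener function, with uniform control of both its exponential type and its polynomial growth on $\mathfrak a^*$, and — in the full, non-$K$-invariant, setting — that the quotient still obeys the Weyl-group side conditions that single out the image of the Fourier transform on $\mathscr E'(X)$ (equivalently, that the $K$-invariant case really does bootstrap to all $K$-types). The $W$-invariance of $\widetilde S$ is exactly what makes this bookkeeping succeed, and the finiteness of $W$ is what legitimizes the averaging steps.
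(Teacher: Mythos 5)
The paper does not actually prove this theorem: it is quoted, with sufficiency attributed to \cite{CGK2017}, Theorem 5.1, and necessity to \cite{GKW2021}, Theorem 3.1. So the only meaningful comparison is between your outline and those external arguments, and your outline is indeed the standard Ehrenpreis--H\"ormander duality strategy that underlies them. But as written your proposal is a plan rather than a proof: both directions bottom out in exactly the steps that constitute the content of the cited theorems, and you flag this yourself with ``should then show,'' ``I would adapt,'' and ``I expect that bootstrap to be delicate.''

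Concretely, in the sufficiency direction the gap is the division step. Having shown that $Q(\lambda,kM)=\widetilde R(\lambda,kM)/\widetilde{\widecheck S}(\lambda)$ is entire in $\lambda$, you must show that $Q$ is the Fourier transform of some $T\in\mathscr E'(X)$. For the full (non-$K$-invariant) transform \eqref{E:fouriertransform1}, the image of $\mathscr E'(X)$ is cut out not by simple ``Weyl-group side conditions'' on a scalar-valued function but by the symmetry relations of Helgason's Paley--Wiener theorem, which couple the values $\widetilde T(s\lambda,\cdot)$ for $s\in W$ through integrals over $K/M$; verifying that division by the $W$-invariant scalar $\widetilde{\widecheck S}$ preserves these relations, together with the uniform exponential-type and polynomial-growth bounds obtained from the minimum-modulus theorem, is precisely the work done in \cite{CGK2017}. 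Your parenthetical alternative (settle the $K$-invariant case via Theorem \ref{T:K-inv-paleywiener} and bootstrap through $K$-types) relocates the same hole without filling it. In the necessity direction you invoke, without proof, that the Abel transform is a topological isomorphism $\mathscr E_K(X)\to\mathscr E(\mathfrak a)^W$ intertwining $C_S$ with a Euclidean convolution; this is equivalent to the surjectivity of the dual Abel transform and is itself a nontrivial theorem. Moreover, the negation of ``slowly decreasing'' only yields that $\widetilde S$ is small on logarithmically large balls around some sequence of real points; it does not hand you convenient spectral points such as the $\pi j$ of Lemma \ref{T:liouville1} at which $\widetilde S$ vanishes or nearly vanishes. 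Constructing the obstructing sequence from that weaker information, and $W$-symmetrizing it while retaining a lower bound at the evaluation point, is the substance of \cite{GKW2021}, Theorem 3.1, and is not supplied here. In short, the architecture is the right one, but the two genuinely hard steps are named rather than carried out.
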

The sufficiency of the slow decrease condition is proved in \cite{CGK2017}, Theorem 5.1, and the necessity in \cite{GKW2021}, Theorem 3.1.  The operator $C_S$ is in general not injective; for example, if $\lambda_0\in\mathfrak a^*_\mathbb C$ is a zero of $\widetilde S(\lambda)$, then $C_S(\varphi_{\lambda_0})=\widetilde S(\lambda_0)\,\varphi_{\lambda_0}=0$.

 After the above preliminaries, we can now consider the snapshot problem for the symmetric space $X$.

Let $\Delta$ denote the Laplace-Beltrami operator on $X$.  Any solution $u(x,t)\in C^\infty(X\times\mathbb R)$ of the \emph{shifted wave equation}
\begin{equation}\label{E:modifiedwaveeqn}
\Delta u(x,t)=\left(\frac{\partial^2}{\partial t^2} - \langle\rho,\rho\rangle\right) u(x,t),
\end{equation}
will be called a \emph{wave} in $X$.
If we are given the Cauchy data
\begin{equation}\label{E:symmcauchydata}
u(x,0)=f(x),\qquad (\partial_t u)(x,0)=g(x),
\end{equation}
then $u$ is unique (since \eqref{E:modifiedwaveeqn} is hyperbolic),  and is given by
\begin{equation}\label{E:convsolnsym}
u(x,t) = f*S_t'(x) + g* S_t(x),
\end{equation}
where $S_t$ and $S_t'$ are $K$-invariant compactly supported distributions on $X$ whose spherical Fourier transforms are given by
\begin{equation}\label{E:symfundsoln}
\widetilde S_t(\lambda) = \frac{\sin t \sqrt{\langle\lambda,\lambda\rangle}}{\sqrt{\langle\lambda,\lambda\rangle}},\qquad \widetilde S_t'(\lambda) = \cos t\sqrt{\langle\lambda,\lambda\rangle},\qquad \lambda\in\mathfrak a^*_{\mathbb C}.
\end{equation}
We note that $\widetilde S_t$ and $\widetilde S_t'$ are $W$-invariant entire functions on $\mathfrak a^*_\mathbb C$ satisfying estimates of the form \eqref{E:exponentialtype1}, with $R=|t|$. By Theorem \ref{T:K-inv-paleywiener}, $S_t$ and $S_t'$ are thus $K$-invariant distributions on $X$ supported in the closed ball $\overline B_{|t|}(o)$ in $X$.  We call the family of pairs $(S_t,\,S_t')$ the \emph{fundamental solution} of \eqref{E:modifiedwaveeqn}.

To see that \eqref{E:convsolnsym} is the solution of the Cauchy problem \eqref{E:modifiedwaveeqn} - \eqref{E:symmcauchydata}, we first recall that if $S\in\mathscr E_K'(X)$, then
$$
\left(\Delta S\right)^\sim(\lambda)=\left(-\langle\lambda,\lambda\rangle-\langle\rho,\rho\rangle\right)\,\widetilde S, \qquad \lambda\in\mathfrak a^*_\mathbb C.
$$
(See \cite{GGA}, Ch.~IV.)   Given \eqref{E:symfundsoln}, this implies that $S_t$ and $S_t'$ satisfy the shifted wave equations
$$
\Delta S_t=\left(\frac{\partial^2}{\partial t^2}-\langle\rho,\rho\rangle\right)\,S_t.
$$   
Since $\Delta(u*S_t)=u*\Delta S_t$ and $\Delta(u*S_t')=u*\Delta S_t'$, it follows that the $u(x,t)$ in \eqref{E:convsolnsym} satisfies \eqref{E:modifiedwaveeqn}.  That this $u$ satisfies the initial conditions \eqref{E:symmcauchydata} follows from the fact that $(S_0')^\sim=1,\;(\partial_t)(S_t')^\sim|_{t=0}=0$, and $(\partial_t)(S_t)^\sim|_{t=0}=1$.

Suppose that $u(x,t)$ is a wave on $X$.  For any $t\in \mathbb R$, we let $u_t\in \mathscr E(X)$ be the function $u_t(x)=u(x,t)$, and call $u_t$ the \emph{snapshot of $u$ at time $t$}.

Note that the spherical Fourier transforms $\widetilde S_t$ and $\widetilde S_t'$ have the exact same form as their Euclidean counterparts \eqref{E:wave-fund-soln}.  In view of the fact that these are slowly decreasing functions, and in view of the multiplicativity result \eqref{E:fourierconv}, many of the results in \S3 can be restated for the snapshot problem for $X=G/K$ with just a few modifications to the proofs.  These modifications are necessary mainly because the Fourier transforms on $X$ are given in the ``polar coordinates'' $\mathfrak a^*_{\mathbb C}\times K/M$.

For every $\mu\in\mathbb C$, let $\mathscr E_\mu(X)$ denote the eigenspace of $\Delta$ consisting of all $f\in\mathscr E(X)$ such that $\Delta f=-(\mu^2+\langle \rho,\rho\rangle)\, f$.  (Unless $X$ has rank one, these eigenspaces are larger than the joint eigenspaces of all $G$-invariant differential operators on $X$.)  
\begin{lemma}\label{T:eigenspace2}
If $f\in\mathscr E_\mu(X)$, then
$$
f*S_t=\frac{\sin \mu t}{\mu}\,f.
$$
\end{lemma}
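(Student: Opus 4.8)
The plan is to imitate the proof of Lemma \ref{T:eigenspace1} essentially verbatim, replacing the Euclidean wave equation by the shifted wave equation \eqref{E:modifiedwaveeqn} and invoking the uniqueness of its Cauchy problem. Fix $f\in\mathscr E_\mu(X)$, so that $\Delta f=-(\mu^2+\langle\rho,\rho\rangle)\,f$, and set $v(x,t)=\tfrac{\sin\mu t}{\mu}\,f(x)$ (with the usual convention that this means $t\,f(x)$ when $\mu=0$). The claim will follow once I show that $v$ is a wave on $X$ whose Cauchy data are $v(x,0)=0$ and $(\partial_t v)(x,0)=f(x)$, because by \eqref{E:convsolnsym} the function $f*S_t$ is precisely the (unique) solution of \eqref{E:modifiedwaveeqn} with this same Cauchy data — namely \eqref{E:convsolnsym} applied with initial position $0$ and initial velocity $g=f$.

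First I would verify the Cauchy data of $v$: clearly $v(x,0)=0$ and $(\partial_t v)(x,0)=\cos(\mu\cdot 0)\,f(x)=f(x)$. Next I would check that $v$ solves \eqref{E:modifiedwaveeqn}: since $f$ does not depend on $t$ we have $\partial_t^2 v=-\mu^2 v$, while $\Delta v=\tfrac{\sin\mu t}{\mu}\,\Delta f=-(\mu^2+\langle\rho,\rho\rangle)\,v$, so indeed $\Delta v=(\partial_t^2-\langle\rho,\rho\rangle)\,v$; the case $\mu=0$ is identical, using $\partial_t^2(t f)=0$ and $\Delta f=-\langle\rho,\rho\rangle f$. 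Since \eqref{E:modifiedwaveeqn} is hyperbolic, a $C^\infty$ wave is determined by its Cauchy data, so $v$ coincides with $f*S_t$, which is the asserted identity. (As a sanity check one also recovers the Euclidean normalizations: $(S_0)^\sim=0$ and $(\partial_t)(S_t)^\sim|_{t=0}=1$, consistent with the data $(0,f)$.)

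The only points needing a word of justification are that $(x,t)\mapsto (f*S_t)(x)$ is jointly smooth and that it solves \eqref{E:modifiedwaveeqn} — i.e. that one may legitimately write $\Delta(f*S_t)=f*\Delta S_t$ and $\partial_t(f*S_t)=f*(\partial_t S_t)$ and then use $\Delta S_t=(\partial_t^2-\langle\rho,\rho\rangle)S_t$. These are the standard facts about convolving a smooth function with a compactly supported distribution depending smoothly on a parameter, exactly as used for the fundamental solution right after \eqref{E:convsolnsym}; I expect no real obstacle here. An alternative, purely spectral, route is to note that $f$ is real analytic by elliptic regularity and expand the convolution operator $C_{S_t}$ as the power series $\sum_{k\ge 0} t^{2k+1}(\Delta+\langle\rho,\rho\rangle)^k/(2k+1)!$ (the analogue of the remark following Lemma \ref{T:eigenspace1}), which applied to $f\in\mathscr E_\mu(X)$ collapses to $\sum_{k\ge 0}(-1)^k t^{2k+1}\mu^{2k}/(2k+1)!=\tfrac{\sin\mu t}{\mu}$ times $f$; justifying convergence there is slightly more delicate than the Cauchy-problem argument, so I would present the latter as the main proof.
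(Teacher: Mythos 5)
Your proof is correct and follows the paper's argument exactly: both sides are identified as solutions of the shifted wave equation \eqref{E:modifiedwaveeqn} with Cauchy data $u(x,0)=0$, $(\partial_t u)(x,0)=f(x)$, and uniqueness of the Cauchy problem gives the identity. You simply spell out the verification (and the $\mu=0$ convention) in more detail than the paper does.
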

Just as in the proof of Lemma \ref{T:eigenspace1}, the proof follows from observing that both sides above are solutions of the shifted wave equation \eqref{E:modifiedwaveeqn} with the same initial data $u(x,0)=0,\;(\partial_t u)(x,0)=f(x)$.

\begin{theorem}\label{T:kernel2}
For any $t\neq 0$, the kernel of the convolution operator $C_{S_t}$ is the closure in $\mathscr E(X)$ of the direct sum
$$
\bigoplus_{j=1}^\infty \mathscr E_{\pi j/t}(X).
$$
\end{theorem}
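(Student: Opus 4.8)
The plan is to follow the proof of Theorem~\ref{T:C_S1-kernel} almost verbatim, substituting the symmetric-space Fourier analysis on $\mathfrak a^*_{\mathbb C}\times K/M$ for the Euclidean one. First, each $\mathscr E_{\pi j/t}(X)$ is a closed, hence Fréchet, subspace of $\mathscr E(X)$, and Lemma~\ref{T:eigenspace2} gives $f*S_t=\frac{\sin(\pi j)}{\pi j/t}\,f=0$ for every $f\in\mathscr E_{\pi j/t}(X)$; thus the closure $V$ in $\mathscr E(X)$ of $\bigoplus_{j\ge 1}\mathscr E_{\pi j/t}(X)$ is contained in $\ker C_{S_t}$, and the content of the theorem is the reverse inclusion.

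Suppose $V\subsetneq\ker C_{S_t}$, choose $f\in\ker C_{S_t}\setminus V$, and use the Hahn--Banach theorem to produce $T\in\mathscr E'(X)$ with $T(f)\neq 0$ whose restriction to each $\mathscr E_{\pi j/t}(X)$ vanishes. The key observation is that for every $\lambda\in\mathfrak a^*_{\mathbb C}$ and $kM\in K/M$ the plane wave $x\mapsto e^{(-i\lambda+\rho)A(x,kM)}$ is an eigenfunction of $\Delta$ with eigenvalue $-(\langle\lambda,\lambda\rangle+\langle\rho,\rho\rangle)$, hence belongs to $\mathscr E_\mu(X)$ whenever $\mu^2=\langle\lambda,\lambda\rangle$. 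Therefore the Fourier transform $\widetilde T(\lambda,kM)=T_x\!\left(e^{(-i\lambda+\rho)A(x,kM)}\right)$ -- which for each fixed $kM$ is entire in $\lambda$, with polynomial growth on $\mathfrak a^*$ and exponential type bounded by the radius of a ball supporting $T$ -- vanishes identically in $kM$ whenever $\langle\lambda,\lambda\rangle=\pi^2j^2/t^2$ for some $j\ge 1$. Now the zero set of $\widetilde S_t(\lambda)=\sin(t\sqrt{\langle\lambda,\lambda\rangle})/\sqrt{\langle\lambda,\lambda\rangle}$ in $\mathfrak a^*_{\mathbb C}$ is precisely the disjoint union of the varieties $V_j=\{\lambda:\langle\lambda,\lambda\rangle=\pi^2j^2/t^2\}$, $j\ge 1$, and each $V_j$ is an embedded complex submanifold on which the differential $\partial\widetilde S_t$ is nowhere zero (the differential of $\lambda\mapsto\langle\lambda,\lambda\rangle$ vanishes only at the origin, which lies on no $V_j$). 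Since $\widetilde T(\cdot,kM)$ vanishes on each $V_j$, the ratio $\widetilde T(\cdot,kM)/\widetilde S_t$ is holomorphic on all of $\mathfrak a^*_{\mathbb C}$ for every fixed $kM$.

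It remains to deduce from this that $\widetilde T/\widetilde S_t=\widetilde\Psi$ for some $\Psi\in\mathscr E'(X)$, i.e. that $T=\Psi*S_t$. Because $\widetilde S_t$ restricts on $\mathfrak a^*$ to the radial sinc $\sin(t|\lambda|)/|\lambda|$, it is slowly decreasing on $\mathfrak a^*_{\mathbb C}$, so $S_t$ is invertible in the sense of Theorem~\ref{T:symconvsurjectivity}; the symmetric-space counterpart of the division property in Theorem~\ref{T:CT-surjectivity}(e) -- obtained from the minimum-modulus and Paley--Wiener estimates underlying \cite{CGK2017} and \cite{GKW2021}, with the bounds on $\widetilde T/\widetilde S_t$ uniform in $kM$ since $\widetilde S_t$ is independent of $kM$ and $W$-invariant -- then supplies the desired $\Psi$. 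With $\Psi$ in hand, \eqref{E:homogeneous-space-convolution-2} together with $\widecheck S_t=S_t$ (valid because $\widetilde S_t$ depends only on $\langle\lambda,\lambda\rangle$) gives $T(f)=(\Psi*S_t)(f)=\Psi(f*\widecheck S_t)=\Psi(f*S_t)=\Psi(0)=0$, contradicting $T(f)\neq 0$; hence $\ker C_{S_t}=V$. I expect the one genuinely new difficulty, relative to Theorem~\ref{T:C_S1-kernel}, to be exactly this division step: one must confirm that dividing the Fourier transform of a compactly supported distribution on $X$ by the $W$-invariant function $\widetilde S_t$ preserves -- \emph{uniformly in} $kM\in K/M$ -- both the exponential-type and polynomial bounds and the range conditions of Helgason's Paley--Wiener theorem for $\mathscr E'(X)$, which is precisely where the modifications to the \S3 arguments announced at the start of this section are actually needed.
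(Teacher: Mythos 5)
Your proposal is correct and follows essentially the same route as the paper: the Hahn--Banach separation, the vanishing of $\widetilde T(\lambda,kM)$ on the varieties $V_j$ via the Fourier exponentials $e_{-\lambda,b}\in\mathscr E_{\pi j/t}(X)$, the holomorphy of $\widetilde T(\cdot,kM)/\widetilde S_t$, the division step supplying $\Psi$ with $T=\Psi*S_t$ (which the paper likewise handles by invoking the proof of Theorem 5.1 of \cite{CGK2017}), and the final contradiction using $\widecheck S_t=S_t$. You have also correctly located the only genuinely new point relative to Theorem~\ref{T:C_S1-kernel}, namely the $kM$-uniform Paley--Wiener/division estimate.
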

\begin{proof}
By Lemma \ref{T:eigenspace2} for each $j=1,2,\ldots$, the eigenspace $\mathscr E_{\pi j/t}(X)$ belongs to the kernel of $C_{S_t}$.  Hence the closure of the above direct sum is a subspace of $\ker (C_{S_t})$.  Conversely, suppose that $f\in\ker(C_{S_t})$ but does not belong to the closure of the direct sum above.  Then there exists a $T\in\mathscr E'(X)$ such that $T=0$ on the direct sum above, but $T(f)\neq 0$.  

For any positive integer $j$, let $V_j$ be the algebraic variety in $\mathfrak a^*_\mathbb C$ consisting of all $\lambda$ such that $\langle\lambda,\lambda\rangle=\pi^2 j^2/t^2$.  If $\lambda\in V_j$ and $b\in K/M$, then the Fourier exponential 
$$
e_{-\lambda,b}(x)=e^{(-i\lambda+\rho)A(x,b)}
$$
belongs to the eigenspace $\mathscr E_{\pi j/t}$. (See \cite{GASS}, Ch.~II, Prop.~3.14.)  Applying $T$ to this function, we conclude that $\widetilde T(\lambda,b)=0$.  

Thus for each $b\in K/M$, the function $\lambda\mapsto \widetilde T(\lambda,b)$ is a holomorphic function on $\mathfrak a^*_\mathbb C$ that vanishes on the $V_j$.  The union of the $V_j$ is precisely the zero set of $\widetilde S_t$, and the complex differential $\partial \widetilde S_t$ is never zero at any point in this union. Hence for any $b\in K/M$, the function $\lambda\mapsto\widetilde T(\lambda, b)/\widetilde S_t(\lambda)$ is holomorphic in $\mathfrak a^*_{\mathbb C}$.  The proof of \cite{CGK2017}, Theorem 5.1 then shows that there exists a distribution $\Psi\in\mathscr E'(X)$ such that $T=\Psi*S_t$.

This implies that 
$$
T(f)=\Psi*S_t(f)=\psi(f*\widecheck S_t)=\Psi (f*S_t)=0,
$$
a contradiction.  In the above we have used the fact that $\widecheck S_t=S_t$, which follows from the fact that $(\widecheck S_t)^\sim(\lambda)=\widetilde S_t(-\lambda)=\widetilde S_t(\lambda)$, for all $\lambda\in\mathfrak a^*_\mathbb C$.
\end{proof}

This brings us to the symmetric space analogue of Proposition \ref{T:two-snapshot-existence}.  
\begin{proposition}\label{T:two-snapshot-existence2}
Given $f,h\in\mathscr E(X)$, there exist infinitely many waves $u(x,t)$ such that $u_0=f$ and $u_1=h$.
\end{proposition}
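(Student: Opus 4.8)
The plan is to transcribe the proof of Proposition~\ref{T:two-snapshot-existence} almost verbatim, using the symmetric-space tools assembled above in place of their Euclidean counterparts. First, by the convolution solution \eqref{E:convsolnsym}, any wave $u$ on $X$ with $u_0=f$ has the form $u(x,t)=(f*S_t')(x)+(g*S_t)(x)$, where $g=(\partial_t u)(\cdot,0)\in\mathscr E(X)$; conversely \eqref{E:convsolnsym} produces a wave from any such $g$. Imposing the second snapshot condition $u_1=h$ is then equivalent to the single convolution equation $g*S_1=h-f*S_1'$. Hence it suffices to prove that this equation has infinitely many solutions $g\in\mathscr E(X)$.

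For solvability, note that $\widetilde S_1(\lambda)=\sin\sqrt{\langle\lambda,\lambda\rangle}/\sqrt{\langle\lambda,\lambda\rangle}$ has the same form as in the Euclidean case; since the Killing form is positive definite on $\mathfrak a^*$, the restriction of $\widetilde S_1$ to the real subspace $\mathfrak a^*$ is $\lambda\mapsto\sin|\lambda|/|\lambda|$, a radial sine wave modulated by the distance to the origin, which is slowly decreasing by exactly the elementary estimate used for $\mathbb R^n$ (and, as noted in the text, this property is independent of the basis chosen to identify $\mathfrak a^*_{\mathbb C}$ with $\mathbb C^l$). By Theorem~\ref{T:symconvsurjectivity}, $C_{S_1}$ is therefore surjective on $\mathscr E(X)$, so at least one $g$ solving $g*S_1=h-f*S_1'$ exists.

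Finally, the full solution set is the coset $g+\ker C_{S_1}$. By Theorem~\ref{T:kernel2}, $\ker C_{S_1}$ contains every eigenspace $\mathscr E_{\pi j}(X)$ for $j=1,2,\ldots$, and each such eigenspace is nonzero --- for instance it contains the Fourier exponentials $e_{\lambda,b}(x)=e^{(i\lambda+\rho)A(x,b)}$ with $\langle\lambda,\lambda\rangle=\pi^2 j^2$ and $b\in K/M$ (cf.\ \cite{GASS}, Ch.~II, Prop.~3.14). Thus $\ker C_{S_1}$ is infinite-dimensional, so there are infinitely many admissible $g$; since $g$ is recovered from $u$ as $(\partial_t u)(\cdot,0)$, distinct choices of $g$ yield distinct waves. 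This produces infinitely many waves with snapshots $f$ and $h$ at $t=0$ and $t=1$. The only step requiring any verification beyond bookkeeping is the slow decrease of $\widetilde S_1$, and this is literally the Euclidean computation already carried out around \eqref{E:initvelocitycond}; consequently the proposition is essentially an immediate transcription, with no genuine obstacle.
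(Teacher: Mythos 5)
Your proposal is correct and follows the paper's argument exactly: the paper likewise observes that $\widetilde S_1$ is slowly decreasing on $\mathfrak a^*_{\mathbb C}$ and then reduces the claim to Theorem~\ref{T:symconvsurjectivity} (surjectivity of $C_{S_1}$) together with Theorem~\ref{T:kernel2} (infinite-dimensionality of $\ker C_{S_1}$), exactly as in the Euclidean Proposition~\ref{T:two-snapshot-existence}. No discrepancies to report.
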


Observing that $\widetilde S_t(\lambda)$ is a slowly decreasing function on $\mathfrak a^*_{\mathbb C}$, 
the proof is the same as that of Proposition \ref{T:two-snapshot-existence}, in view of Theorem \ref{T:symconvsurjectivity} and Theorem \ref{T:kernel2}.  The proposition above easily generalizes, with the same proof, to the result that there exist infinitely many waves with prescribed snapshots $u_a=f$ and $u_b=h$, at any two given times $a$ and $b$.

Although there are infinitely many waves $u(x,t)$ with given snapshots $u_0$ and $u_1$, the symmetric space analogues of Theorems \ref{T:integersnapshot} and \ref{T:integersnapshot2} also hold: the integer time snapshots $u_m$ are uniquely determined, for any $m\in\mathbb Z$.
\begin{theorem}\label{T:symintegersnapshot}
Suppose that $u(x,t)\in C^\infty(X\times \mathbb R)$ is a solution of the shifted wave equation \eqref{E:modifiedwaveeqn}. Then for any $m\in\mathbb Z$, the snapshot $u_m$ of $u$ is determined completely by $u_0$ and $u_1$. More precisely, we have
\begin{equation}\label{E:intsnapshotsym}
u_m=u_1*\Psi_m-u_0*\Psi_{m-1},
\end{equation}
where the distribution $\Psi_m\in\mathscr E_K'(X)$ is given by
\begin{equation}\label{E:psi-m2}
\widetilde\Psi_m(\lambda)=\frac{\sin(m\sqrt{\langle\lambda,\lambda\rangle})}{\sin \sqrt{\langle\lambda,\lambda\rangle}}=U_{m-1}(\cos \sqrt{\langle\lambda,\lambda\rangle})\qquad (\lambda\in\mathfrak{a}^*_\mathbb C).
\end{equation}
Here $U_m$ represents the Chebyshev polynomial of the second kind, with the supplementary definition $U_{-m-1}=-U_{m-1}$ for $m\in\mathbb Z^+$.

More generally, suppose that $a<b$ and we are given the snapshots $u_a$ and $u_b$, where $a<b$.  If we put $s=b-a$, then
\begin{equation}\label{E:general-integer-time2}
u_{a+m(b-a)}=u_b*\Psi_{m,s}-u_a*\Psi_{(m-1),s}, \qquad\qquad m\in\mathbb Z,
\end{equation}
where $\Psi_{m,s}\in\mathscr E_K'(X)$ is given by
\begin{equation}\label{E:psi-ms2}
\widetilde\Psi_{m,s}(\lambda)=\frac{\sin(ms\sqrt{\langle\lambda,\lambda\rangle})}{\sin (s\sqrt{\langle\lambda,\lambda\rangle})}=U_{m-1}(\cos s\sqrt{\langle\lambda,\lambda\rangle}),\qquad\qquad \lambda\in\mathfrak a^*_\mathbb C.
\end{equation}
\end{theorem}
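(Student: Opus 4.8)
The plan is to transcribe the proof of Theorem \ref{T:integersnapshot} almost verbatim, with $|\xi|$ replaced by $\sqrt{\langle\lambda,\lambda\rangle}$, the Euclidean Fourier transform replaced by the spherical Fourier transform, and the convolution algebra $\mathscr E'(\mathbb R^n)$ replaced by $\mathscr E_K'(X)$. The structural facts we need are: $\mathscr E_K'(X)$ is a commutative convolution algebra and its action on $\mathscr E(X)$ is associative (\cite{GGA}, Ch.~II); the spherical transform is injective on $\mathscr E_K'(X)$ and converts convolution to pointwise multiplication, as in \eqref{E:fourierconv}; and, crucially, $S_t,\,S_t'$, and the $\Psi_{m,s}$ all lie in $\mathscr E_K'(X)$, so the only noncommutativity we ever meet is harmless.

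First I would confirm that each $\Psi_{m,s}$ is a well-defined element of $\mathscr E_K'(X)$. The function $\lambda\mapsto \sin(ms\sqrt{\langle\lambda,\lambda\rangle})/\sin(s\sqrt{\langle\lambda,\lambda\rangle})=U_{m-1}(\cos s\sqrt{\langle\lambda,\lambda\rangle})$ is $W$-invariant (since $\langle\lambda,\lambda\rangle$ is), is entire on $\mathfrak a^*_\mathbb C$ (the apparent poles of $\sin mz/\sin z$ are removable, and $\sqrt{\langle\lambda,\lambda\rangle}$ enters only through $\langle\lambda,\lambda\rangle$), and satisfies an estimate of the form \eqref{E:exponentialtype1} exactly as $\widetilde S_t$ does. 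Theorem \ref{T:K-inv-paleywiener} then produces a unique $K$-invariant compactly supported distribution $\Psi_{m,s}$ with this spherical transform. Its transform also shows $\Psi_{0,s}=0$, $\Psi_{1,s}=\delta_o$ (the point mass at $o$, the identity for convolution on $X$), and $\Psi_{-1,s}=-\delta_o$.

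Next I would feed the elementary identities $\sin((m+2)\theta)+\sin m\theta=2\cos\theta\,\sin((m+1)\theta)$ and $\cos((m+2)\theta)+\cos m\theta=2\cos\theta\,\cos((m+1)\theta)$ — applied with $\theta=s\sqrt{\langle\lambda,\lambda\rangle}$, and with $(a+(m+1)s)\sqrt{\langle\lambda,\lambda\rangle}$ playing the role of the larger angle in the shifted case — into \eqref{E:symfundsoln}, \eqref{E:psi-ms2}, and the multiplicativity of the spherical transform. Because the spherical transform is injective on $\mathscr E_K'(X)$, this yields the genuine distribution identities
\begin{align*}
\Psi_{m+2,s}+\Psi_{m,s}&=2\,S_s'*\Psi_{m+1,s},\\
S_{a+(m+2)s}'+S_{a+ms}'&=2\,S_s'*S_{a+(m+1)s}',\\
S_{a+(m+2)s}+S_{a+ms}&=2\,S_s'*S_{a+(m+1)s},
\end{align*}
valid for all $m\in\mathbb Z$. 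Writing $u_t=u_0*S_t'+g*S_t$ for the wave $u$ in terms of its Cauchy data $(u_0,g)$ via \eqref{E:convsolnsym} (where $g=(\partial_t u)(\cdot,0)$), adding the formulas for $u_{a+(m+2)s}$ and $u_{a+ms}$, substituting the last two identities, and then using commutativity of $\mathscr E_K'(X)$ together with associativity to move $S_s'$ to the outside, gives
$$
u_{a+(m+2)s}+u_{a+ms}=2\,u_{a+(m+1)s}*S_s',\qquad m\in\mathbb Z.
$$
Setting $v_m=u_b*\Psi_{m,s}-u_a*\Psi_{(m-1),s}$ and convolving the first identity by $u_b$ and by $u_a$, the same manipulations show that $v_m$ obeys the identical three-term recursion $v_{m+2}+v_m=2\,v_{m+1}*S_s'$.

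Finally, from $\Psi_{1,s}=\delta_o$, $\Psi_{0,s}=0$, and $\Psi_{-1,s}=-\delta_o$ we get $v_1=u_b=u_{a+s}$ and $v_0=u_a$, so $v_m=u_{a+ms}$ for $m=0,1$. Since the three-term recursion determines each term from its two neighbours (upward via $v_{m+2}$, downward via $v_m$), a two-sided induction yields $u_{a+ms}=v_m$ for all $m\in\mathbb Z$, which is \eqref{E:general-integer-time2}; the case $a=0$, $b=1$, $s=1$ is \eqref{E:intsnapshotsym}. I expect no real obstacle: the only two points needing care are that every convolution factor other than the $u_{a+ms}$ must be kept inside $\mathscr E_K'(X)$ so that commutativity and associativity can peel $S_s'$ off to the right, and that the time-rescaling trick implicit in the Euclidean Theorem \ref{T:integersnapshot2} is not available here because of the $\langle\rho,\rho\rangle$-shift in \eqref{E:modifiedwaveeqn}, so the step size $s$ must be carried explicitly through the trigonometric identities.
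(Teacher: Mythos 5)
Your proposal is correct and is essentially the paper's proof: the paper simply states that the argument of Theorem \ref{T:integersnapshot} carries over once one uses the multiplicativity \eqref{E:fourierconv} of right convolution by elements of $\mathscr E_K'(X)$, and your write-up is a careful expansion of exactly that — Paley--Wiener to define $\Psi_{m,s}$, injectivity of the spherical transform to convert the trigonometric identities into convolution identities in the commutative algebra $\mathscr E_K'(X)$, and the two-sided three-term recursion with base cases $v_0=u_a$, $v_1=u_b$. No gaps.
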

\begin{proof}
The proof is the same as that of Theorem \ref{T:integersnapshot}, if we take into account the multiplicative property \eqref{E:fourierconv} of right convolution by elements of $\mathscr E'_K(X)$.
\end{proof}

Next we consider the existence and uniqueness question for a wave on $X$ with snapshots at three given times, which for convenience we specify as $t=0,1$, and $\alpha$.  (The reason we may do so is that for any positive constant $c$, we can rescale the Riemannian structure on $X$ by a factor of $c^2$ and the time axis by a factor of $c$.  This replaces $\Delta$ by $(1/c^2)\,\Delta$, $\partial ^2/\partial t^2$ by $(1/c^2)\,\partial^2/\partial t^2$, and $\langle \rho,\rho\rangle$ by $(1/c^2)\,\langle \rho,\rho\rangle$.  Under these changes, waves remain the same but the time scales are different.)

Uniqueness is easily established: due to Proposition \ref{T:integersnapshot2} being the exact analogue of Theorems \ref{T:integersnapshot} and \ref{T:integersnapshot2}, we can apply the same reasoning which led to Proposition \ref{T:snapshot-uniqueness} to obtain the following result.

\begin{proposition}\label{T:symsnapshot-uniqueness}  
If $\alpha$ is a irrational number, then a wave $u(x,t)$ is uniquely determined by its snapshots $u_0$, $u_1$ and $u_\alpha$.  
If $\alpha$ is a nonzero rational number with $\alpha=p/q$ in lowest terms, then $u$ can be only be determined up to its snapshots $u_t$ where $t$ is an integer multiple of $1/q$.
\end{proposition}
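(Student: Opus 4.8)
The plan is to reproduce, essentially verbatim, the argument preceding Proposition~\ref{T:snapshot-uniqueness}, substituting Theorem~\ref{T:symintegersnapshot} for Theorems~\ref{T:integersnapshot} and~\ref{T:integersnapshot2}, and using the eigenspaces $\mathscr E_\mu(X)$ together with Lemma~\ref{T:eigenspace2} wherever the Euclidean argument invoked $\mathscr E_\lambda$ and Lemma~\ref{T:eigenspace1}. The non-commutativity of harmonic analysis on $X$ causes no difficulty here, since all of the relevant convolution identities are already packaged inside Theorem~\ref{T:symintegersnapshot}, whose proof in turn only uses the multiplicativity~\eqref{E:fourierconv} of right convolution by $K$-invariant compactly supported distributions.

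\emph{The determined times.} By the general form~\eqref{E:general-integer-time2} of Theorem~\ref{T:symintegersnapshot}, any two snapshots $u_a,u_b$ of a wave $u$ on $X$ determine $u_{a+m(b-a)}$ for every $m\in\mathbb Z$. Applying this with $(a,b)=(0,1)$ and with $(a,b)=(0,\alpha)$ shows that $(u_0,u_1,u_\alpha)$ determines $u_m$ and $u_{m\alpha}$ for all $m\in\mathbb Z$. The identities $2j_1\alpha+j_2=j_1\alpha-((-j_2)-j_1\alpha)$ and $j_1\alpha+2j_2=-j_1\alpha-2(-j_2-j_1\alpha)$, read as instances of~\eqref{E:general-integer-time2} in which the base times $a$ and $b$ are of the already-known forms $j\alpha$ or $j$, then show that $(u_0,u_1,u_\alpha)$ determines $u_t$ for every $t=j_1\alpha+j_2$ with $j_1,j_2\in\mathbb Z$ and at least one of $j_1,j_2$ even.

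\emph{Conclusion in the two cases.} If $\alpha$ is irrational, the set $\{2j_1\alpha+j_2:j_1,j_2\in\mathbb Z\}$ is already dense in $\mathbb R$; since waves on $X$ are by definition $C^\infty$, hence continuous, on $X\times\mathbb R$, the snapshot $u_t$ is then determined for all $t\in\mathbb R$ (equivalently, a wave with $u_0=u_1=u_\alpha=0$ vanishes identically). If $\alpha=p/q$ in lowest terms with $q>0$, the elementary observation, handled exactly as in \S3, that the parity restriction does not shrink $\{j_1p+j_2q:j_1,j_2\in\mathbb Z\}=\mathbb Z$ shows that the determined times are precisely the integer multiples of $1/q$. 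It remains to see that for rational $\alpha$ no snapshot $u_t$ with $t\notin\frac1q\mathbb Z$ is pinned down. For this, note that the eigenspace $\mathscr E_{q\pi}(X)$ is nonzero (it contains the Fourier exponentials with spectral parameter $\lambda$ satisfying $\langle\lambda,\lambda\rangle=q^2\pi^2$, as in the proof of Theorem~\ref{T:kernel2}), and by Lemma~\ref{T:eigenspace2} every $g\in\mathscr E_{q\pi}(X)$ satisfies $g*S_1=\frac{\sin(q\pi)}{q\pi}\,g=0$ and $g*S_\alpha=\frac{\sin(p\pi)}{q\pi}\,g=0$. Choosing $0\neq g\in\mathscr E_{q\pi}(X)$ and letting $u$ be the wave with Cauchy data $u(\cdot,0)=0$, $\partial_t u(\cdot,0)=g$, formula~\eqref{E:convsolnsym} together with Lemma~\ref{T:eigenspace2} gives $u_t=\frac{\sin(q\pi t)}{q\pi}\,g$, which vanishes exactly when $t\in\frac1q\mathbb Z$; adding this $u$ to any wave with prescribed snapshots $(f_0,f_1,f_\alpha)$ at $t=0,1,\alpha$ alters $u_t$ for every $t\notin\frac1q\mathbb Z$ without changing the three given snapshots.

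\emph{Main obstacle.} There is no deep obstruction: the only points requiring attention are (i) confirming that~\eqref{E:general-integer-time2} is the precise analogue of~\eqref{E:general-integer-time} needed to run the Euclidean arithmetic, and (ii) producing the explicit non-uniqueness witness $g\in\mathscr E_{q\pi}(X)$ in the rational case, where one checks $\mathscr E_{q\pi}(X)\subset\ker C_{S_1}\cap\ker C_{S_\alpha}$ via Lemma~\ref{T:eigenspace2} using $\sin(q\pi)=\sin(p\pi)=0$.
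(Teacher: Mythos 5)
Your proposal is correct and follows essentially the same route as the paper, which simply notes that Theorem \ref{T:symintegersnapshot} is the exact analogue of Theorems \ref{T:integersnapshot} and \ref{T:integersnapshot2} and then repeats the reasoning that led to Proposition \ref{T:snapshot-uniqueness}. Your explicit non-uniqueness witness $g\in\mathscr E_{q\pi}(X)$ in the rational case is a slightly more detailed version of the paper's appeal to the infinite-dimensionality of $\ker C_{S_1}$, but it is the same underlying idea.
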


As for existence, suppose we are given functions $f_0,\,f_1$, and $f_\alpha$ in $\mathscr E(X)$.  In analogy with \eqref{E:existenceofg},  the convolution solution \eqref{E:convsolnsym} shows that there is a wave $u(x,t)$ such that $u_0=f_0,\,u_1=f_1$, and $u_\alpha=f_\alpha$ if and only if there exists a $g\in\mathscr E(X)$ such that 
\begin{equation}\label{E:symjointpair}
g*S_1=f_1-f_0*S_1'\quad\text{and}\quad g*S_\alpha=f_\alpha-f_0*S_\alpha'.
\end{equation}
This leads to the compatibility condition
\begin{equation}\label{E:symcompatibility}
(f_1-f_0*S_1')*S_\alpha=(f_\alpha-f_0*S_\alpha')*S_1,
\end{equation}
which is necessary for the existence of such a $g$.

Again, as with condition \eqref{E:compatibility-three}, the above condition has the equivalent formulation 
$$
f_0*S_{\alpha-1}+f_1*S_{-\alpha}+f_\alpha*S_1=0.
$$

Now the convolution operators $C_{S_1}$ and $C_{S_\alpha}$ are surjective and commute on $\mathscr E(X)$.  According to Proposition \ref{T:linalg1}, the compatibility condition \eqref{E:symcompatibility} guarantees the existence of a $g\in\mathscr E(X)$ satisfying the convolution equations \eqref{E:symjointpair} if and only if $C_{S_\alpha}(\ker (C_{S_1}))=\ker (C_{S_1})$.  This is not possible if $\alpha$ is a Liouville number, as the following analogue to Lemma \ref{T:liouville1} shows.
\begin{lemma}\label{T:liouville4}
Suppose that $\alpha$ is a Liouville number.  Then the operator $C_{S_\alpha}\colon \ker(C_{S_1})\to\ker(C_{S_1})$ is not surjective, and hence its range is of first category in $\ker(C_{S_1})$.
\end{lemma}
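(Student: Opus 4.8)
The plan is to imitate the proof of Lemma \ref{T:liouville1} essentially verbatim, replacing the Euclidean exponentials $e^{i\pi q_k x_1}$ by spherical functions on $X$. Since a Liouville number is irrational, I would first note that $C_{S_\alpha}\colon\ker(C_{S_1})\to\ker(C_{S_1})$ is injective: if $g\in\ker(C_{S_1})\cap\ker(C_{S_\alpha})$, the wave $u(x,t)=g*S_t(x)$ of \eqref{E:convsolnsym} has $u_0=u_1=u_\alpha=0$, so $u\equiv 0$ by Proposition \ref{T:symsnapshot-uniqueness}, whence $g=(\partial_t u)(\cdot,0)=0$. Arguing by contradiction, suppose $C_{S_\alpha}$ is also surjective on $\ker(C_{S_1})$; since $\ker(C_{S_1})$ is a closed subspace of the Fr\'echet space $\mathscr E(X)$, hence itself Fr\'echet, the open mapping theorem makes $C_{S_\alpha}^{-1}\colon\ker(C_{S_1})\to\ker(C_{S_1})$ continuous.

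Next I would exhibit the small-denominator obstruction. Fix a unit vector $\omega\in\mathfrak a^*$. For each $k$ the Liouville hypothesis gives a rational $p_k/q_k$ with $q_k\geq 2$ and $|q_k\alpha-p_k|<q_k^{-(k-1)}$; put $\lambda_k=q_k\pi\,\omega$, so $\langle\lambda_k,\lambda_k\rangle=q_k^2\pi^2$. Then $\varphi_{\lambda_k}\in\mathscr E_{q_k\pi}(X)\subset\ker(C_{S_1})$ by Theorem \ref{T:kernel2}, and $\varphi_{\lambda_k}(o)=1$. Set $f_k=\varphi_{\lambda_k}/q_k^k$. One verifies that $f_k\to 0$ in $\mathscr E(X)$: differentiating $\varphi_\lambda(x)=\int_K e^{(i\lambda+\rho)A(x,kM)}\,dk$ under the integral sign and using that on any fixed compact $C\subset X$ the Iwasawa projection $A(x,kM)$ and its $x$-derivatives are bounded uniformly in $kM$, one obtains, for every differential operator $D$ on $X$ of order $m$, a bound $\sup_{x\in C}|D\varphi_\lambda(x)|\leq C_{D,C}(1+|\lambda|)^{m}$ for $\lambda\in\mathfrak a^*$; dividing by $q_k^k$ and using $q_k\to\infty$ gives $Df_k\to 0$ uniformly on $C$. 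On the other hand, by Lemma \ref{T:eigenspace2} the operator $C_{S_\alpha}$ acts on $\mathscr E_{q_k\pi}(X)$ as the scalar $\sin(\pi q_k\alpha)/(q_k\pi)$, which is nonzero because $\alpha$ is irrational, so $C_{S_\alpha}^{-1}$ acts there as $q_k\pi/\sin(\pi q_k\alpha)$; evaluating at $o$ and using $|\sin(\pi q_k\alpha)|=|\sin(\pi(q_k\alpha-p_k))|\leq\pi|q_k\alpha-p_k|<\pi q_k^{-(k-1)}$ yields $|(C_{S_\alpha}^{-1}f_k)(o)|=q_k\pi/(|\sin(\pi q_k\alpha)|\,q_k^k)>1$ for every $k$. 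This contradicts the continuity of $C_{S_\alpha}^{-1}$, so $C_{S_\alpha}$ is not surjective on $\ker(C_{S_1})$. The ``first category'' assertion then follows exactly as in the remark following Lemma \ref{T:liouville1}, via the open mapping theorem: a continuous linear map between Fr\'echet spaces with non-meager image is surjective.

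The step I expect to be the main obstacle, relative to the Euclidean case, is the convergence $f_k\to 0$ in $\mathscr E(X)$. In $\mathbb R^n$ this is transparent, since a constant-coefficient operator $D$ applied to $e^{i\pi q_k x_1}$ only multiplies it by a fixed polynomial in $q_k$; on $X$ one must instead extract polynomial-in-$|\lambda|$ growth of the derivatives of $\varphi_\lambda$ (or, equally well, of the Fourier exponentials $e_{-\lambda,b}$) locally uniformly, which requires some bookkeeping with $A(x,\cdot)$ and the chain rule but is otherwise routine. Beyond this, every step transfers word for word from \S3, since $\widetilde S_1$ and $\widetilde S_\alpha$ on $\mathfrak a^*_{\mathbb C}$ have precisely the Euclidean form of \eqref{E:wave-fund-soln} and right convolution is multiplicative on the Fourier side by \eqref{E:fourierconv}.
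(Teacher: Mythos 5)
Your proof is correct and follows the same strategy as the paper's: establish injectivity, invoke the open mapping theorem to get continuity of $C_{S_\alpha}^{-1}$, and then contradict it with a Liouville-driven sequence in the eigenspaces $\mathscr E_{q_k\pi}(X)$ that tends to $0$ in $\mathscr E(X)$ while its preimage under $C_{S_\alpha}$ stays bounded away from $0$ at the base point $o$. The only (immaterial) difference is your choice of witnesses: you use the $K$-invariant spherical functions $\varphi_{q_k\pi\omega}$, whereas the paper uses the Fourier exponentials $e_{\pi q_k\lambda_1,eM}$ written out in Iwasawa coordinates; both lie in $\mathscr E_{q_k\pi}(X)$, equal $1$ at $o$, and admit the required polynomial-in-$q_k$ derivative bounds on compact sets.
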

\begin{proof}
If a continuous linear map of Fr\'echet spaces is not surjective, then its range is of first category, so the second assertion in the lemma follows easily from the first. 

Now assume to the contrary that $C_{S_\alpha}\colon \ker(C_{S_1})\to\ker(C_{S_1})$ is surjective.  An argument similar to that of Proposition \ref{T:csalphainjectivity} shows that it is injective.  Hence it is a homeomorphism, so the inverse map $C_{S_\alpha}^{-1}\colon \ker(C_{S_1})\to\ker(C_{S_1})$ is continuous.  For each $j\in\mathbb N$,  Theorem \ref{T:kernel2} shows that the eigenspace $\mathscr E_{\pi j}$ belongs to $\ker(C_{S_1})$, and by Lemma \ref{T:eigenspace2} the restriction of the linear operator $C_{S_\alpha}^{-1}$ to $\mathscr E_{\pi j}$ is just multiplication by $\pi j/\sin (\pi j\alpha)$. 

Since $\alpha$ is a Liouville number, there exists a sequence $\{q_k\}$ of positive integers $\geq 2$ such that 
\begin{equation}\label{E:liouville-sine-estimate}
|\sin (\pi q_k\alpha)|\leq \pi q_k^{-k}
\end{equation}
 for all $k$.  We will now produce a sequence $\{f_k\}$ in $\ker(C_{S_1})$ that converges to $0$ (in the topology of $\mathscr E(X)$) such that $\{C_{S_\alpha}^{-1}(f_k)\}$ does not converge to $0$, which will give us a contradiction.

For this, fix an orthonormal basis $H_1,\ldots H_l$ of $\mathfrak a$ and a basis $Y_1,\ldots,Y_m$ of $\mathfrak n$.  Since $(n,a)\mapsto na\cdot o$ is a diffeomorphism from $N\times A$ onto $X$, the map
\begin{equation}\label{E:globalcoordinates}
\exp(s_1X_1+\cdots+s_mX_m)\,\exp(t_1H_1+\cdots+t_lH_l)\cdot o\mapsto (s_1,\ldots,s_m,t_1,\ldots,s_l)
\end{equation}
is a global coordinate system on $X$.  This coordinate system (in fact any global coordinate system on $X$) is useful because it allows us to identify the topological vector spaces $\mathscr E(X)$ and $\mathscr E(\mathbb R^n)$.

Let $\lambda_1,\ldots,\lambda_l$ be a basis of $\mathfrak a^*$ dual to $H_1,\ldots,H_l$.  Then $\rho=r_1\lambda_1+\cdots+r_l\lambda_l$, for some real constants $r_1,\ldots,r_l$ and if $\lambda\in\mathfrak a^*_\mathbb C$, then we have $\lambda=\zeta_1\lambda_1+\cdots+\zeta_l\lambda_l$, for complex constants $\zeta_1,\ldots,\zeta_l$. If $x=na\cdot o\in X$, with $a=\exp (t_1H_1+\cdots+t_lH_l)$, the Fourier exponential $e_{\lambda,eM}(x)=e^{(i\lambda+\rho)A(x,eM)}=e^{(i\lambda+\rho)\log a}$ equals
$$
e^{(i \zeta_1+r_1)t_1+\cdots+(i \zeta_l+r_l)t_l}.
$$

For each $k\in\mathbb N$, consider the function
\begin{equation}\label{E:fourier-exponential}
g_k(x)=e_{\pi q_k\lambda_1,eM}(x)=e^{(i\pi q_k\lambda_1+\rho)A(x,eM)}=e^{i\pi q_k t_1}\cdot e^{\sum r_i t_i}.
\end{equation}
Then $g_k$ belongs to the eigenspace $\mathscr E_{\pi q_k}$, and hence lies in $\ker(C_{S_1})$.  Next let $f_k=g_k/q_k^k$.  The sequence $\{f_k\}$ converges to $0$ in $\mathscr E(X)$; in fact, using the global coordinates \eqref{E:globalcoordinates}, \eqref{E:fourier-exponential} shows that for any multiindices $\alpha=(\alpha_1,\ldots,\alpha_l)$ and $\beta=(\beta_1,\ldots,\beta_m)$ and any compact subset $S$ of $X$, there is a constant $C$ such that
$$
\left|\frac{\partial^{|\alpha|}}{\partial t_1^{\alpha_1}\cdots\partial t_l^{\alpha_l}} \frac{\partial^{|\beta|}}{\partial s_1^{\beta_1}\cdots\partial s_m^{\beta_m}} f_k(x)\right|\leq C \frac{q_k^{\alpha_1}}{q_k^k} \to 0\quad\text{as }k\to\infty.
$$
On the other hand, by Lemma \ref{T:eigenspace2},
$$
C_{S_\alpha}^{-1}(f_k)=\frac{\pi (\sin(\pi q_k\alpha))^{-1}}{q_k^{k-1}}\,g_k.
$$
By the estimate  \eqref{E:liouville-sine-estimate}, we obtain $|C_{S_\alpha}^{-1}(f_k)(o)|\geq q_k$ for all $k$, and we get our contradiction.
\end{proof}
Assuming that $\alpha$ is a Liouville number, the fact that $C_{S_\alpha}(\ker C_{S_1})$ is a set of first category in the Fr\'echet space $\ker(C_{S_1})$ implies that ``most" triples $(f_0,f_1,f_\alpha)$ which satisfy the compatibility condition \eqref{E:symcompatibility} are not wave snapshots at times $t=0,1,\alpha$.  To be explicit, let us fix the functions $f_0$ and $f_1$, and also fix a wave $u(x,t)$ for which $u_0=f_0,\,u_1=f_1$.  (There are of course many such $u$.)  Then by Remark \ref{R:linalgremark} and the proof of Theorem \ref{T:liouville1}, a function $f_\alpha\in\mathscr E(X)$  satisfies the compatibility condition \eqref{E:symcompatibility} with $f_0$ and $f_1$ if and only if $f_\alpha-u_\alpha\in\ker(C_{S_1})$.  On the other hand, for any $f_\alpha\in\mathscr E(X)$, $f_\alpha$ is a wave snapshot together with $f_0$ and $f_1$ if and only if $f_\alpha-u_\alpha\in C_{S_\alpha}(\ker(C_{S_1})$.

Next let us suppose that $\alpha$ is not a Liouville number and is irrational.  Recall that almost all real numbers are of this type.  Since the spherical Fourier transforms $\widetilde S_\alpha$ and $\widetilde S'_\alpha$ in \eqref{E:symfundsoln} have the exact same expression as their Euclidean counterparts \eqref{E:wave-fund-soln}, the estimate \eqref{E:fund-estimate1} holds, and we can apply Theorem \ref{T:Ehrenpreis-estimate} to conclude that there exist holomorphic functions $\phi$ and $\psi$ on $\mathfrak a^*_\mathbb C$ which are polynomially increasing on $\mathfrak a^*$ and of exponential type, such that
$$
\phi(\lambda)\,\widetilde S_1(\lambda)+\psi(\lambda)\,\widetilde S_\alpha(\lambda)=1.
$$
Averaging the left hand side above over the Weyl group $W$, and noting that $\widetilde S_1$ and $\widetilde S_\alpha$ are $W$-invariant, we can replace $\phi$ and $\psi$ by their $W$-averages.  We can therefore assume that $\psi$ and $\psi$ are $W$-invariant.  By Theorem \ref{T:K-inv-paleywiener}, there exist distributions $\Phi$ and $\Psi$ in $\mathscr E_K'(X)$ such that $\widetilde\Phi=\phi$ and $\widetilde\Psi=\psi$.  It follows that
$$
\Phi*S_1+\Psi*S_\alpha=\delta_o.
$$
Just as with Lemma \ref{T:kernelsum}, it follows that if functions $f_0,f_1$, and $f_\alpha$ in $\mathscr E(X)$ satisfy the compatibility condition \eqref{E:symcompatibility}, then $(f_1-f_0*S_1',f_\alpha-f_0*S_\alpha')$ is a joint pair for $(S_\alpha,S_1)$.  Hence there is a $g\in\mathscr E(X)$ such that $g*S_1=f_1-f_0*S_1'$ and $g*S_\alpha=f_\alpha-f_0*S_\alpha'$.  We obtain the following analogue of Theorem \ref{T:irrational-not-Liouville}.

\begin{theorem}\label{T:sym-irrational-not-Liouville}
	Let $\alpha$ be irrational and not a Liouville number. If $f_0,\,f_1$ and $f_\alpha$ are functions in $\mathscr E(X)$ satisfying the compatibility condition \eqref{E:symcompatibility}:
	$$ (f_1-f_0*S_1')*S_\alpha=(f_\alpha-f_0*S_\alpha')*S_1, $$
	then there exists a unique smooth wave $u(x,t)$ on $X$ whose snapshots at times $t=0,1,\alpha$ are precisely the given triple $(f_0,f_1,f_\alpha)$.
\end{theorem}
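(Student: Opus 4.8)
The plan is to follow the Euclidean proof of Theorem \ref{T:irrational-not-Liouville} essentially verbatim, replacing $\mathbb R^n$ by $X$, the Euclidean Fourier transform by the spherical Fourier transform, and Ehrenpreis' Euclidean ideal-membership result by its symmetric-space counterpart assembled from Theorem \ref{T:Ehrenpreis-estimate} and the Paley--Wiener theorem \ref{T:K-inv-paleywiener}. Uniqueness is immediate: since $\alpha$ is irrational, Proposition \ref{T:symsnapshot-uniqueness} already guarantees that any wave on $X$ with prescribed snapshots $u_0,u_1,u_\alpha$ is unique, so it suffices to produce one.

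For existence, the key observation is that by \eqref{E:symfundsoln} the spherical Fourier transforms $\widetilde S_1$ and $\widetilde S_\alpha$ have exactly the same functional form as their Euclidean counterparts in \eqref{E:wave-fund-soln}, with $\zeta\cdot\zeta$ replaced by $\langle\lambda,\lambda\rangle$. Identifying $\mathfrak a^*_\mathbb C$ with $\mathbb C^l$ by a basis of $\mathfrak a^*$, the proof of Theorem \ref{T:fund-estimate} then applies word for word (it used only that $\alpha$ is irrational and not Liouville) to give a constant $C>0$ and an integer $N>0$ with $|\widetilde S_1(\lambda)|+|\widetilde S_\alpha(\lambda)|\geq C\,(1+|\lambda|)^{-N}$ for all $\lambda\in\mathfrak a^*_\mathbb C$; in particular the hypothesis \eqref{E:exp-estimate1} of Theorem \ref{T:Ehrenpreis-estimate} holds. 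That theorem produces holomorphic functions $\phi,\psi$ on $\mathfrak a^*_\mathbb C$, polynomially bounded on $\mathfrak a^*$ and of exponential type, with $\phi\,\widetilde S_1+\psi\,\widetilde S_\alpha=1$. Since $\widetilde S_1$ and $\widetilde S_\alpha$ are $W$-invariant, averaging this identity over the Weyl group $W$ lets us replace $\phi$ and $\psi$ by their $W$-averages; then Theorem \ref{T:K-inv-paleywiener} yields distributions $\Phi,\Psi\in\mathscr E'_K(X)$ with $\widetilde\Phi=\phi$, $\widetilde\Psi=\psi$, and hence $\Phi*S_1+\Psi*S_\alpha=\delta_o$.

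With this B\'ezout identity in hand the rest is formal, exactly as in Lemma \ref{T:kernelsum}. Given a triple $(f_0,f_1,f_\alpha)\in\mathscr E(X)^3$ satisfying the compatibility condition \eqref{E:symcompatibility}, set $h_1=f_1-f_0*S_1'$ and $h_\alpha=f_\alpha-f_0*S_\alpha'$, so that $h_1*S_\alpha=h_\alpha*S_1$, and define $g=h_1*\Phi+h_\alpha*\Psi\in\mathscr E(X)$. Convolving $\Phi*S_1+\Psi*S_\alpha=\delta_o$ appropriately and using $h_1*S_\alpha=h_\alpha*S_1$ together with the commutativity of convolution by $K$-invariant compactly supported distributions and the Fourier-multiplicativity \eqref{E:fourierconv}, one checks $g*S_1=h_1$ and $g*S_\alpha=h_\alpha$. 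Then the wave $u(x,t)=f_0*S_t'+g*S_t$ from the convolution solution \eqref{E:convsolnsym} has $u_0=f_0$, $u_1=f_1$, $u_\alpha=f_\alpha$, and by Proposition \ref{T:symsnapshot-uniqueness} it is the unique such wave.

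The only delicate points, and hence the main obstacle, are bookkeeping issues arising from the non-commutativity of harmonic analysis on $X$: one must remain inside the subalgebra $\mathscr E'_K(X)$ of $K$-invariant compactly supported distributions, where convolution is commutative, associative, and Fourier-multiplicative with multiplier depending only on $\lambda\in\mathfrak a^*_\mathbb C$ and not on $b\in K/M$ -- which is precisely why the $W$-averaging step before invoking Theorem \ref{T:K-inv-paleywiener} is needed to keep $\Phi$ and $\Psi$ in $\mathscr E'_K(X)$; and one must verify that the output of Theorem \ref{T:Ehrenpreis-estimate} lies in the Paley--Wiener class required by Theorem \ref{T:K-inv-paleywiener}, which it does since those functions are polynomially bounded and of exponential type. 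Both are handled exactly as in the discussion immediately preceding the statement of the theorem.
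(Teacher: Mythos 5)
Your proposal is correct and follows essentially the same route as the paper: reuse the estimate of Theorem \ref{T:fund-estimate} (valid verbatim since $\widetilde S_1,\widetilde S_\alpha$ have the same form on $\mathfrak a^*_\mathbb C$), apply Theorem \ref{T:Ehrenpreis-estimate}, $W$-average and invoke the Paley--Wiener theorem to get $\Phi*S_1+\Psi*S_\alpha=\delta_o$ in $\mathscr E_K'(X)$, and then argue as in Lemma \ref{T:kernelsum}. The bookkeeping points you flag (staying in $\mathscr E_K'(X)$ and checking the Paley--Wiener class) are exactly the ones the paper addresses.
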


Finally, we consider the existence problem for waves with three snapshots at rational times.  Again, by rescaling the Riemannian structure on $X$ and translating and rescaling the time axis, we can assume that the times are $t=0,p,q$, where $p$ and $q$ are positive integers which are relatively prime.

The arguments in the proof of Theorem \ref{T:psi-compatibility} then carry through without modification in the case of the symmetric space $X$, and we have the following result.

\begin{theorem}\label{T:sym-psi-compatibility}  Let $p$ and $q$ be positive integers which are relatively prime, and suppose that $f_0,\,f_p,\,f_q\in \mathscr E(X)$ satisfy the condition
\begin{equation}\label{E:sym-integercompatibility}
(f_p-f_0*S_p')*\Psi_q=(f_q-f_0*S_q')*\Psi_p.
\end{equation}
Then there exists a wave $u$ such that $u_0=f_0,\,u_p=f_p$, and $u_q=f_q$.
\end{theorem}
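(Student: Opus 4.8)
The plan is to transport the proof of Theorem~\ref{T:psi-compatibility} essentially verbatim, the one structural point of care being that \emph{every} compactly supported distribution that appears must be taken $K$-invariant, so that the spherical Fourier transform is multiplicative under the convolution $C_S$ (formula \eqref{E:fourierconv}) and so that the Paley--Wiener Theorem~\ref{T:K-inv-paleywiener} is available to manufacture distributions from their transforms. First I would choose integers $k,l$ with $kp+lq=1$ and substitute $z=\sqrt{\langle\lambda,\lambda\rangle}$ into the trigonometric identity
\[
\frac{\sin(kpz)}{\sin z}\cos(lqz)+\frac{\sin(lqz)}{\sin z}\cos(kpz)=1 .
\]
The fractions $\sin(kpz)/\sin z$ and $\sin(lqz)/\sin z$ are polynomials in $\cos z$ (Chebyshev polynomials $U_{kp-1}$, $U_{lq-1}$), hence $W$-invariant entire functions on $\mathfrak a^*_\mathbb C$ that are polynomially bounded on $\mathfrak a^*$ and of exponential type; by Theorem~\ref{T:K-inv-paleywiener} they are the spherical transforms of the $K$-invariant distributions $\Psi_{kp},\Psi_{lq}\in\mathscr E'_K(X)$ from \eqref{E:psi-m2}, while $\cos(lqz)$, $\cos(kpz)$ are the transforms of $S'_{lq},S'_{kp}$ from \eqref{E:symfundsoln}. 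Taking inverse spherical transforms gives $\Psi_{kp}*S'_{lq}+\Psi_{lq}*S'_{kp}=\delta_o$ in $\mathscr E'_K(X)$.

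Next I would define $A,B\in\mathscr E'_K(X)$ by $\widetilde A(\lambda)=\sin(kp\sqrt{\langle\lambda,\lambda\rangle})/\sin(p\sqrt{\langle\lambda,\lambda\rangle})$ and $\widetilde B(\lambda)=\sin(lq\sqrt{\langle\lambda,\lambda\rangle})/\sin(q\sqrt{\langle\lambda,\lambda\rangle})$; these are again $W$-invariant entire (the zeros of the denominators are cancelled by the numerators, since $p\mid kp$), of exponential type and polynomial growth, so Theorem~\ref{T:K-inv-paleywiener} supplies them. From the multiplicativity of the spherical transform we get $\Psi_{kp}=\Psi_p*A$ and $\Psi_{lq}=\Psi_q*B$, so the $\delta_o$ identity above becomes $\Psi_p*A*S'_{lq}+\Psi_q*B*S'_{kp}=\delta_o$. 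Convolving with an arbitrary $g\in\mathscr E(X)$ and then with $S_1$, and using $\Psi_p*S_1=S_p$, $\Psi_q*S_1=S_q$ (immediate on the transform side), yields the analogue of \eqref{E:g-def1}:
\[
(g*S_p)*A*S'_{lq}+(g*S_q)*B*S'_{kp}=g*S_1 .
\]
Since $\widetilde S_1(\lambda)=\sin\sqrt{\langle\lambda,\lambda\rangle}/\sqrt{\langle\lambda,\lambda\rangle}$ is slowly decreasing on $\mathfrak a^*_\mathbb C$, Theorem~\ref{T:symconvsurjectivity} shows $C_{S_1}$ is surjective on $\mathscr E(X)$, so I may pick $g\in\mathscr E(X)$ with
\[
(f_p-f_0*S'_p)*A*S'_{lq}+(f_q-f_0*S'_q)*B*S'_{kp}=g*S_1 ;
\]
note the left-hand side depends only on the given triple $(f_0,f_p,f_q)$.

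Finally I would let $u(x,t)$ be the wave with Cauchy data $u(x,0)=f_0(x)$, $(\partial_t u)(x,0)=g(x)$, given by the convolution solution \eqref{E:convsolnsym}, and verify $u_p=f_p$, $u_q=f_q$. Convolving the defining equation for $g$ with $\Psi_p$ and using $(g*S_1)*\Psi_p=g*S_p$ gives
\[
g*S_p=(f_p-f_0*S'_p)*\Psi_p*A*S'_{lq}+(f_q-f_0*S'_q)*\Psi_p*B*S'_{kp};
\]
applying the hypothesis \eqref{E:sym-integercompatibility} to replace $(f_q-f_0*S'_q)*\Psi_p$ by $(f_p-f_0*S'_p)*\Psi_q$ and then collapsing with $\Psi_p*A*S'_{lq}+\Psi_q*B*S'_{kp}=\delta_o$ yields $g*S_p=f_p-f_0*S'_p$, whence $u_p-u_0*S'_p=f_p-f_0*S'_p$ and (since $u_0=f_0$) $u_p=f_p$; the identical computation with the roles of $p$ and $q$ interchanged gives $u_q=f_q$. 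The only genuine subtlety, and the place where one must be slightly more careful than in the Euclidean case, is that convolution on $X=G/K$ is not commutative in general; this is harmless here because every kernel in sight ($S_t$, $S'_t$, $\Psi_m$, $A$, $B$, $\delta_o$) lies in $\mathscr E'_K(X)$, on which convolution is commutative and associative and the spherical transform is multiplicative, so all the rearrangements and the factorizations $\Psi_{kp}=\Psi_p*A$, $\Psi_{lq}=\Psi_q*B$ are legitimate. No new analytic input beyond Theorems~\ref{T:K-inv-paleywiener} and~\ref{T:symconvsurjectivity} is needed.
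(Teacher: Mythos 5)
Your proposal is correct and follows exactly the route the paper intends: the paper's own "proof" consists of the single remark that the argument of Theorem \ref{T:psi-compatibility} carries through without modification, and your write-up is precisely that argument, with the right supporting citations (Theorem \ref{T:K-inv-paleywiener} for producing $\Psi_{kp},\Psi_{lq},A,B$ from their $W$-invariant transforms, Theorem \ref{T:symconvsurjectivity} for the surjectivity of $C_{S_1}$, and commutativity of convolution on $\mathscr E'_K(X)$ for the rearrangements).
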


\section{The Snapshot Problem for Spheres}

In this section we will consider the snapshot problem for a shifted wave equation on the unit $n$-sphere $S^n$ in $\mathbb R^{n+1}$ which was first considered by Lax and Phillips in 1978 \cite{LaxPhillips1978}.  Among other things, our purpose here is to show that for almost all $\alpha\neq 0$, there is a unique wave $u(x,t)$ matching given snapshots at just the two times $t=0$ and $t=\alpha$.

Before considering the problem in detail, it will be necessary to review some of the essential elements of harmonic analysis on $S^n$.   

For convenience we let $o=(0,0,\ldots,1)^T$, the last standard basis vector in $\mathbb R^{n+1}$, which we will call the \emph{north pole}.  The special orthogonal group $\text{SO}(n+1)$ acts transitively on $S^n$ by left multiplication, and the isotropy subgroup $K$ at $o$ is isomorphic to $\text{SO}(n)$; $S^n$ can thus be identified with the homogeneous space $G/K$, where $G=\text{SO}(n+1)$ and $K=\text{SO}(n)$.

We endow $S^n$ with the Riemannian metric inherited from $\mathbb R^{n+1}$, and let $\Delta_{S^n}$ denote the corresponding Laplace-Beltrami operator. 

For each nonnegative integer $l$, let $\mathcal H_l$ be the vector space of degree $l$ spherical harmonics on $S^n$.  $\mathcal H_l$ consists of the restrictions to $S^n$ of the degree $l$ homogeneous harmonic polynomials on $\mathbb R^{n+1}$. Each $\mathcal H_l$ is a joint eigenspace of $\Delta_{S^n}$:
\begin{equation}\label{E:harmoniceigenfcn}
\Delta_{S^n} \psi = -l(l+n-1)\,\psi,\qquad\qquad \psi\in \mathcal H_l.
\end{equation}

It is well known (see, e.g.~\cite{Muller1966}) that the dimension $d(l)$ of $\mathcal H_l$ is given by
$$
d(l)=\frac{(2l+n-1)\,\Gamma(l+n-1)}{\Gamma(l+1)\,\Gamma(n)},
$$
which is a polynomial in $l$ of degree $n-1$. From now on, we fix an orthonormal basis $\{Y_{lm}\}_{m=1}^{d(l)}$ (in $L^2(S^n))$ of each $\mathcal H_l$.  The $Y_{lm}$ can be taken to be real-valued and the first vector $Y_{l1}$ of the orthonormal basis can be taken to be invariant under rotations preserving the north pole $o$.   Then $Y_{l1}$ is proportional to the unique $K$-spherical function $\varphi_l$ in $\mathcal H_l$, and since $\|\varphi_l\|_2=(d(l))^{-1/2}$  (see \cite{GGA}, Ch.~V), we have
$$
Y_{l1}=\sqrt{d(l)}\,\varphi_l.
$$
The spherical function $\varphi_l$ is given by $\varphi_l(x)=P_k^{(n-1)/2}(x\cdot o)$, where $P_k^{(n-1)/2}$ is the Gegenbauer (or ultraspherical) polynomial of degree $l$ in $n$ dimensions. (The literature is certainly vast -- see, for example, \cite{SteinWeiss71}, Ch.~IV or \cite{Muller1966}.)

One of the defining characteristics of the spherical function $\varphi_l$ is the following mean value relation which holds for any function $F\in\mathcal H_l$:
\begin{equation}\label{E:sphericalmean}
\int_K F(uk\cdot y)\,dk=F(u\cdot o)\,\varphi_l(y),
\end{equation}
for all $u\in\text{SO}(n+1)$ and all $y\in S^n$.  (See \cite{GGA}, Ch.~IV.)  In the above, $dk$ denotes the normalized Haar measure on $K$.

Let $\langle\cdot\,,\,\cdot\rangle$ denote the inner product in $L^2(S^n)$.  Any function $f\in L^2(S^n)$, has a Fourier expansion
\begin{equation}\label{E:sphericalharmonicexpansion}
f=\sum_{l\geq 0}\sum_{m=1}^{d(l)} a_{lm}\,Y_{lm},
\end{equation}
with $a_{lm}=\langle f, Y_{lm}\rangle$.  It is well known that the function $f$ belongs to $C^\infty(S^n)$ if and only if its Fourier coefficients $a_{lm}$ are rapidly decreasing in $l$, that is to say
\begin{equation}\label{E:coeffrapiddecrease}
\sup_{l,m} |a_{lm}|\,l^N<\infty
\end{equation}
for all positive integers $N$.  (See, for example, \cite{Sugiura1971}, Corollary to Theorem 1.) 

It will be convenient for us to go over the topology of $\mathscr E(S^n)$.  As mentioned in \S1, for any manifold $M$, the locally convex topologies on the spaces $\mathscr D(M)$ and $\mathscr E(M)$ are discussed in detail in \cite{GGA}, Ch.~I, Sec.~2.  If $M$ is compact, the two spaces coincide and $\mathscr E(M)$ is a Fr\'echet space whose topology is defined by the seminorms 
$$
\|f\|_D=\|Df(x)\|_\infty,\qquad\qquad f\in C^\infty(M),
$$ 
where $D$ is any linear differential operator on $M$ with $C^\infty$ coefficients.  For the sphere $S^n$, it turns out that we only need powers of the Laplacian to obtain this topology.
\begin{theorem}\label{T:spheresmoothtopology}
The topology of $\mathscr E(S^n)$ coincides with the topology defined by the family of seminorms
$$
R_N (f)=\|(\Delta_{S^n})^N f\|_\infty, \quad f\in\mathscr E(S^n),\;N=0,1,2,\ldots.
$$
\end{theorem}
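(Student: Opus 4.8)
The plan is to show that the two families of seminorms generate the same locally convex topology by proving that each seminorm of one family is dominated, up to a multiplicative constant, by finitely many seminorms of the other. One direction is immediate: $R_N$ is literally the defining seminorm $\|f\|_D$ of $\mathscr E(S^n)$ associated to the smooth-coefficient linear differential operator $D=(\Delta_{S^n})^N$, so the topology generated by $\{R_N\}_{N\ge 0}$ is coarser than the standard topology of $\mathscr E(S^n)$.

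For the reverse inclusion, fix an arbitrary linear differential operator $D$ of order $m$ with smooth coefficients on $S^n$; I must bound $\|Df\|_\infty$ by finitely many of the $R_k(f)$. First I would fix an integer $s>n/2$ and invoke the Sobolev embedding $H^s(S^n)\hookrightarrow C^0(S^n)$ together with the standard fact that $D$ maps $H^{m+s}(S^n)$ boundedly into $H^s(S^n)$, giving $\|Df\|_\infty\le C_1\|Df\|_{H^s}\le C_2\|f\|_{H^{m+s}}$. Next I would pass to spherical harmonics: writing $f=\sum_{l,m}a_{lm}Y_{lm}$ and recalling from \eqref{E:harmoniceigenfcn} that $\Delta_{S^n}Y_{lm}=-\lambda_l Y_{lm}$ with $\lambda_l=l(l+n-1)$, the Sobolev norm on $S^n$ is $\|f\|_{H^t}^2=\sum_l(1+\lambda_l)^t\sum_m|a_{lm}|^2$, while $\|(\Delta_{S^n})^k f\|_{L^2}^2=\sum_l\lambda_l^{2k}\sum_m|a_{lm}|^2$. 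Choosing $N$ with $2N\ge m+s$ and using the elementary inequality $(1+x)^{2N}\le C_N(1+x^{2N})$ valid for $x\ge 0$, one obtains
\[
\|f\|_{H^{m+s}}^2\le\|f\|_{H^{2N}}^2\le C_N\bigl(\|f\|_{L^2}^2+\|(\Delta_{S^n})^N f\|_{L^2}^2\bigr).
\]
Finally, since $S^n$ has finite volume, $\|g\|_{L^2}\le\sqrt{\mathrm{vol}(S^n)}\,\|g\|_\infty$ for every $g$; applying this with $g=f$ and $g=(\Delta_{S^n})^N f$ yields $\|f\|_{H^{m+s}}\le C_3\bigl(R_0(f)+R_N(f)\bigr)$, and hence $\|Df\|_\infty\le C\sum_{k=0}^N R_k(f)$. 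Combining the two directions shows the seminorm families are mutually dominated, so they induce the same topology on $\mathscr E(S^n)$.

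I expect the only point of substance to be the middle step, where an arbitrary differential operator is traded for powers of $\Delta_{S^n}$; this is exactly where the ellipticity of $\Delta_{S^n}$ enters, packaged here as the Sobolev embedding plus the spectral description of Sobolev norms in terms of the $\lambda_l$. A slightly more self-contained variant avoids naming Sobolev spaces: estimate $|a_{lm}|\le C\lambda_l^{-N}\|(\Delta_{S^n})^N f\|_\infty$ from $\lambda_l^N a_{lm}=(-1)^N\langle(\Delta_{S^n})^N f,\,Y_{lm}\rangle$ and $\|Y_{lm}\|_{L^1}\le C$, estimate $\|DY_{lm}\|_\infty\le C\,l^{\,m+(n-1)/2}$ (again via Sobolev on $S^n$, since $\|Y_{lm}\|_{H^t}\le C l^{t}$), and then sum the series $\|Df\|_\infty\le\sum_{l,m}|a_{lm}|\,\|DY_{lm}\|_\infty$, which converges once $N$ is taken large enough because $d(l)=\dim\mathcal H_l$ is polynomial in $l$; this produces the same domination $\|Df\|_\infty\le C_N R_N(f)$ directly.
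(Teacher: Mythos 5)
Your proposal is correct. Note that the paper does not actually prove this statement --- it simply cites Sugiura (Corollary to Theorem 4 of \cite{Sugiura1971}), so you are supplying an argument where the paper defers to a reference. Your two directions are both sound: the easy inclusion is immediate since each $R_N$ is one of the defining seminorms $\|f\|_D$ with $D=(\Delta_{S^n})^N$, and the substantive inclusion correctly trades an arbitrary order-$m$ operator $D$ for powers of $\Delta_{S^n}$ via $\|Df\|_\infty\le C\|f\|_{H^{m+s}}$ ($s>n/2$), the spectral description of $H^t(S^n)$ in terms of $\lambda_l=l(l+n-1)$, the inequality $(1+x)^{2N}\le C_N(1+x^{2N})$, and finally $\|\cdot\|_{L^2}\le\sqrt{\mathrm{vol}(S^n)}\,\|\cdot\|_\infty$ to land on $R_0(f)+R_N(f)$. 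This is essentially the same mechanism Sugiura uses (ellipticity of $\Delta_{S^n}$ converted into rapid decay of Fourier coefficients), and your ``self-contained variant'' is in fact closer in spirit to his argument and to the way the paper later uses the theorem (the equivalence of smoothness with rapid decrease of the $a_{lm}$ in \eqref{E:coeffrapiddecrease}). Two cosmetic points in that variant: the identity $\lambda_l^N a_{lm}=(-1)^N\langle(\Delta_{S^n})^Nf,Y_{lm}\rangle$ gives no information when $l=0$ (since $\lambda_0=0$), so the $l=0$ term must be absorbed into $R_0(f)$ separately; and the bound $\|DY_{lm}\|_\infty\le C\,l^{m+(n-1)/2}$ really comes from the addition theorem ($\|Y_{lm}\|_\infty\le\sqrt{d(l)}$) rather than from crude Sobolev embedding, which only yields $l^{m+n/2+\epsilon}$ --- but since all you need is polynomial growth against the arbitrarily fast decay of $|a_{lm}|$, neither point affects the validity of the conclusion.
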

(See \cite{Sugiura1971}, Corollary to Theorem 4.)

From \eqref{E:harmoniceigenfcn} and \eqref{E:coeffrapiddecrease}, we see that the Fourier expansion \eqref{E:sphericalharmonicexpansion} of any $f\in\mathscr E(S^n)$ converges to $f$ in the topology of $\mathscr E(S^n)$.

Let $\mathscr E'(S^n)$ denote the space of distributions on $S^n$, and let $\mathscr E'_K(S^n)$ denote the space of $K$-invariant elements in $\mathscr E(S^n)$.  For any $S\in\mathscr E_K'(S^n)$, we define
\begin{equation}\label{E:schur-const}
    \widehat S(l)=S(\varphi_l)\qquad (l=0,1,2,\ldots)
\end{equation}
Then $S$ has a formal expansion in terms of the $K$-spherical functions $\varphi_l$ given by
\begin{equation}\label{E:spherical-expansion}
S=\sum_{l=0}^\infty d(l)\,\widehat S(l)\,\varphi_l.
\end{equation}

From Theorem \ref{T:spheresmoothtopology}, equation \eqref{E:harmoniceigenfcn}, and the fact that $\|\varphi_l\|_\infty=\varphi_l(o)=1$ for all $l$, we see that the coefficients $\widehat S(l)$ are polynomially increasing, in the sense that there is an integer $N>0$ and a constant $C$ such that
\begin{equation}\label{E:sphere-paleywiener}
    |\widehat S(l)|\leq C(1+l)^N \qquad\text{for all } l.
\end{equation}
Conversely, any polynomially increasing sequence $\{\beta_l\}$ gives rise to a unique distribution $S\in\mathscr E'_K(S^n)$ for which $\widehat S(l)=\beta_l$ for all $l$. (This can be regarded as the $K$-invariant Paley-Wiener Theorem on $S^n$.)  Indeed, for any $f\in\mathscr E(S^n)$ with Fourier expansion \eqref{E:sphericalharmonicexpansion}, define
$$
S(f)=\sum_{l\geq 0}\sum_{m=1}^{d(l)} \beta_l\,a_{lm}\,Y_{lm}(o).
$$
Then the series above converges absolutely (since $\beta_l\,a_{lm}$ is still rapidly decreasing in $l$), and the closed graph theorem shows that $S\in\mathscr E'(S^n)$.  Moreover, $S$ is clearly $K$-invariant, and it is also clear that $\widehat S(l)=\beta_l$.

We will now need some details about convolutions on $S^n$.  Suppose that $f\in\mathscr E(S^n)$, $T\in\mathscr E'(S^n)$, and $S\in\mathscr E'_K(S^n)$.  Then $f*S$ is defined as in \eqref{E:homogeneous-space-convolution}, and $T*S$ is defined as in \eqref{E:homogeneous-space-convolution-2}.

For any $S\in\mathscr E'_K(S^n)$, let $C_S$ denote the (right) convolution operator on $\mathscr E(S^n)$ given by $f\mapsto f*S$.  As stated in \S4, $C_S$ is a continuous linear operator on $\mathscr E(S^n)$. We will now need the following well-known formula.  
\begin{lemma}\label{T:interwining}  Let $S\in\mathscr E'_K(S^n)$. Then
\begin{equation}\label{E:convolution-fourier}  
\langle C_S(f), \psi\rangle=\widehat S(l)\,\langle f,\psi\rangle,
\end{equation}
for all $f\in\mathscr E(S^n)$ and all $\psi\in\mathcal H_l$.
\end{lemma}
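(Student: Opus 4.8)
The plan is to show first that $C_S$ acts on each spherical harmonic space $\mathcal H_l$ as multiplication by the scalar $\widehat S(l)$, and then to bootstrap to an arbitrary $f\in\mathscr E(S^n)$ using the convergence of the spherical harmonic expansion together with the continuity of $C_S$.

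For the first step, fix $\psi=Y_{lm}\in\mathcal H_l$ and $u\in G=\mathrm{SO}(n+1)$. By the definition \eqref{E:homogeneous-space-convolution} of convolution on $S^n$, $(\psi*S)(u\cdot o)=s(\phi)$, where $s$ is the pullback of $S$ to $G$ and $\phi(v)=Y_{lm}(uv^{-1}\cdot o)$; moreover $s(\phi)=S(\phi^\natural)$ with $\phi^\natural(w\cdot o)=\int_K\phi(wk)\,dk$. Expanding, $\phi^\natural(w\cdot o)=\int_K Y_{lm}(uk^{-1}w^{-1}\cdot o)\,dk$, and substituting $k\mapsto k^{-1}$ (legitimate since $K$ is compact) gives $\phi^\natural(w\cdot o)=\int_K Y_{lm}(ukw^{-1}\cdot o)\,dk$. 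This is exactly the left-hand side of the mean value relation \eqref{E:sphericalmean} applied to $F=Y_{lm}$ with $y=w^{-1}\cdot o$, so $\phi^\natural(w\cdot o)=Y_{lm}(u\cdot o)\,\varphi_l(w^{-1}\cdot o)$. Since $\langle w^{-1}o,o\rangle=\langle o,wo\rangle$ for $w\in\mathrm{SO}(n+1)$, we have $\varphi_l(w^{-1}\cdot o)=\varphi_l(w\cdot o)$, so $\phi^\natural$ is, as a function on $S^n$, precisely $Y_{lm}(u\cdot o)\,\varphi_l$. Therefore $(\psi*S)(u\cdot o)=Y_{lm}(u\cdot o)\,S(\varphi_l)=\widehat S(l)\,Y_{lm}(u\cdot o)$, i.e. $C_S(\psi)=\widehat S(l)\,\psi$ for every $\psi\in\mathcal H_l$.

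For the second step, write the spherical harmonic expansion $f=\sum_{l\ge 0}\sum_{m=1}^{d(l)}a_{lm}Y_{lm}$, which converges to $f$ in $\mathscr E(S^n)$ (as noted after Theorem \ref{T:spheresmoothtopology}). Since $C_S$ is continuous on $\mathscr E(S^n)$, it may be applied term by term, yielding $C_S(f)=\sum_{l,m}a_{lm}\widehat S(l)\,Y_{lm}$ with convergence in $\mathscr E(S^n)$, hence in $L^2(S^n)$. Pairing with $\psi\in\mathcal H_l$ and using that $\langle Y_{l'm'},\psi\rangle=0$ whenever $l'\ne l$ (orthogonality of spherical harmonics of different degrees) kills all but the degree-$l$ terms, so $\langle C_S(f),\psi\rangle=\widehat S(l)\sum_{m'}a_{lm'}\langle Y_{lm'},\psi\rangle=\widehat S(l)\,\langle f,\psi\rangle$, which is \eqref{E:convolution-fourier}.

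The only genuinely delicate point is the identification $\phi^\natural=Y_{lm}(u\cdot o)\,\varphi_l$: one must keep careful track of the inversion in \eqref{E:homogeneous-space-convolution}, check that after the substitution $k\mapsto k^{-1}$ the integrand has exactly the form demanded by \eqref{E:sphericalmean}, and invoke the symmetry $\varphi_l(w^{-1}\cdot o)=\varphi_l(w\cdot o)$ (equivalently, $\widecheck S=S$ for $K$-invariant $S$ on the two-point homogeneous space $S^n$) in order to read off $\phi^\natural$ as a genuine function of the point $w\cdot o$. Everything else is routine. (Alternatively, one can bypass this computation entirely: $C_S$ commutes with the left $G$-action on $\mathscr E(S^n)$ — immediate from \eqref{E:homogeneous-space-convolution} — and preserves each $\Delta_{S^n}$-eigenspace, so by irreducibility of $\mathcal H_l$ under $\mathrm{SO}(n+1)$ and Schur's lemma $C_S$ acts on $\mathcal H_l$ by a scalar, whose value is $\widehat S(l)$ as seen by evaluating $\varphi_l*S$ at $o$.)
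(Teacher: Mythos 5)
Your argument is correct, but it is organized differently from the paper's. The paper proves \eqref{E:convolution-fourier} for a general $f$ in one pass: it unwinds $\langle f*S,\psi\rangle$ via the Fubini theorem for distributions, inserts an inner $K$-average, and applies the mean value relation \eqref{E:sphericalmean} to the harmonic $\overline\psi$ being paired against, so that $S$ ends up evaluated on $\varphi_l$. You instead apply \eqref{E:sphericalmean} to the harmonic being \emph{convolved}, obtaining first the eigenrelation $C_S(\psi)=\widehat S(l)\,\psi$ on each $\mathcal H_l$ (i.e.\ you prove the diagonalization \eqref{E:convdiagonalization} directly, which the paper deduces only afterwards as a corollary of the lemma), and then pass to general $f$ by combining the $\mathscr E(S^n)$-convergence of the spherical harmonic expansion with the continuity of $C_S$ and orthogonality of distinct degrees. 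Your computation of $\phi^\natural$ is right: the inversion-invariance of Haar measure on $K$ and the identity $\varphi_l(w^{-1}\cdot o)=\varphi_l(w\cdot o)$ (immediate since $\varphi_l(x)$ depends only on $x\cdot o$ and $w$ is orthogonal) are exactly the points that need care, and you handle both. The trade-off: the paper's proof is self-contained modulo distributional Fubini and needs no density or continuity input, whereas yours leans on the continuity of $C_S$ on $\mathscr E(S^n)$ and the topological convergence of the expansion (both of which the paper does assert), in exchange for isolating the structurally more informative statement that $C_S$ acts by the scalar $\widehat S(l)$ on $\mathcal H_l$. Your parenthetical Schur's lemma route is also valid and is the cleanest conceptually, though it requires the irreducibility of $\mathcal H_l$ under $\mathrm{SO}(n+1)$, which the paper does not explicitly invoke.
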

\begin{proof} There are several ways to prove \eqref{E:convolution-fourier}.  The following is one that just depends on the group-theoretic definition \eqref{E:homogeneous-space-convolution} of convolution.

Let $du$ denote the normalized Haar measure on $\text{SO}(n+1)$, $dk$ the normalized Haar measure on $K=\mathrm{SO}(n)$, and let $\omega_n$ denote the area of $S^n$ in $\mathbb R^{n+1}$.  If $s\in\mathscr E'(\mathrm{SO}(n+1))$ is the ``pullback'' of $S$ to $\mathrm{SO}(n+1)$, then \eqref{E:homogeneous-space-convolution} and the Fubini theorem for distributions (see, for example, \cite{RudinFA}, Theorem 3.27) give us
\begin{align}
\langle f*S,\psi\rangle&=\omega_n\,\int_{\text{SO}(n+1)} \left(\int_{\text{SO}(n+1)} f(uv^{-1}\cdot o)\,ds(v)\right)\,\overline\psi(u\cdot o)\,du\nonumber\\
&=\omega_n\,\int_{\text{SO}(n+1)} \left(\int_{\text{SO}(n+1)} f(uv^{-1}\cdot o)\,\overline\psi(u\cdot o)\,du\right)\,ds(v)\nonumber\\
&=\int_{\text{SO}(n+1)} \left(\omega_n\,\int_{\text{SO}(n+1)} f(u\cdot o)\,\overline\psi(uv\cdot o)\,du\right)\,ds(v)\label{E:conv-fouriercoeff}
\end{align}
Now the inner integral above equals
\begin{align*}
\omega_n\,\int_{\text{SO}(n+1)} &\left(\int_K  f(uk\cdot o)\,dk\right)\,\overline\psi(uv\cdot o)\,du\\
&=\omega_n\,\int_K\left(\int_{\text{SO}(n+1)} f(u\cdot o)\,\overline\psi(ukv\cdot o)\,du\right)\,dk\\
&=\omega_n\,\int_{\text{SO}(n+1)}f(u\cdot o)\,\left(\int_K \overline\psi(ukv\cdot o)\,dk\right)\,du\\
&=\varphi_l(v\cdot o)\,\omega_n \int_{\text{SO}(n+1)} f(u\cdot o)\,\overline\psi(u\cdot o)\,du\\
&=\varphi_l(v\cdot o)\,\langle f,\psi\rangle,
\end{align*}
where we have used \eqref{E:sphericalmean} and the fact that the spherical function $\varphi_l$ is real-valued.  Plugging this back into \eqref{E:conv-fouriercoeff}, the formula \eqref{E:convolution-fourier} now follows.
\end{proof}

Lemma \ref{T:interwining} allows us to express convolutions in terms of Fourier expansions. In fact, by \eqref{E:convolution-fourier}, $C_S$ is scalar multiplication by $\widehat S(l)$ on each $\mathcal H_l$, and thus
\begin{equation}\label{E:convdiagonalization}
f=\sum_{l\geq 0}\sum_{m=1}^{d(l)} c_{lm}\,Y_{lm}\implies C_S(f)=\sum_{l\geq 0} \sum_{m=1}^{d(l)} c_{lm}\, \widehat S(l) Y_{lm}
\end{equation}
for all $f\in\mathscr E(S^n)$.  (Since $C_S$ commutes with left translations from $\mathrm{SO}(n)$, it preserves each $\mathcal H_l$ and acts as scalar multiplication there.  The Schur constants are precisely the $\widehat S(l)$.)

It will now be useful for us to state some important mapping properties of the convolution operator $C_S$ in terms of the coefficients $\widehat S(l)$.

\begin{theorem}\label{T:spherconvproperties}
Let $S\in\mathscr E'_K(S^n)$. The right convolution operator $C_S:\mathscr E(S^n)\to\mathscr E(S^n)$ is injective if and only if $\widehat S(l)\neq 0$ for all $l$.  The operator $C_S$ is surjective if and only if there exists a nonnegative integer $M$ and a constant $C>0$ such that
\begin{equation}\label{E:lowerestimate}
|\widehat S(l)|\geq C\,(1+l)^{-M}
\end{equation}
for all $l\geq 0$.
\end{theorem}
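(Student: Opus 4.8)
The plan is to reduce everything to the diagonalization \eqref{E:convdiagonalization}, which says that $C_S$ acts on each $\mathcal H_l$ as the scalar $\widehat S(l)$, together with the characterization of $\mathscr E(S^n)$ coming from \eqref{E:coeffrapiddecrease}: a formal series $\sum_{l,m} c_{lm}\,Y_{lm}$ represents an element of $\mathscr E(S^n)$ precisely when $\{c_{lm}\}$ is rapidly decreasing in $l$, and since $d(l)$ grows only polynomially this means $\sup_{l,m}|c_{lm}|\,(1+l)^N<\infty$ for every $N$. The injectivity statement is then immediate: if $\widehat S(l)\neq 0$ for all $l$, then $C_S(f)=0$ forces every Fourier coefficient of $f$ to vanish, so $f=0$; conversely, if $\widehat S(l_0)=0$, then $C_S$ annihilates the nonzero spherical function $\varphi_{l_0}\in\mathcal H_{l_0}$.

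For the ``if'' direction of surjectivity, assume $|\widehat S(l)|\geq C(1+l)^{-M}$ (which in particular forces $\widehat S(l)\neq 0$, so $C_S$ is already known to be injective). Given $h=\sum_{l,m} b_{lm}\,Y_{lm}\in\mathscr E(S^n)$, set $c_{lm}=b_{lm}/\widehat S(l)$. Then $|c_{lm}|\leq C^{-1}(1+l)^M\,|b_{lm}|$, which is still rapidly decreasing, so $f=\sum_{l,m} c_{lm}\,Y_{lm}\in\mathscr E(S^n)$ and, by \eqref{E:convdiagonalization}, $C_S(f)=h$. Hence $C_S$ is (bijective, in particular) surjective.

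The substantive part is the converse. First I would observe that surjectivity forces $\widehat S(l)\neq 0$ for every $l$: if $\widehat S(l_0)=0$, then by \eqref{E:convdiagonalization} every element of the range of $C_S$ has vanishing $\mathcal H_{l_0}$-component, so $\varphi_{l_0}$ is not in the range, a contradiction. Combined with the injectivity criterion just proved, $C_S$ is then a continuous linear bijection of the Fréchet space $\mathscr E(S^n)$ onto itself, so the open mapping theorem gives that $C_S^{-1}$ is continuous. By Theorem \ref{T:spheresmoothtopology}, continuity of $C_S^{-1}$ at the seminorm $\|\cdot\|_\infty$ yields a constant $C'>0$ and an integer $N'\geq 0$ with $\|C_S^{-1}f\|_\infty\leq C'\,\max_{0\leq j\leq N'}\|(\Delta_{S^n})^j f\|_\infty$ for all $f\in\mathscr E(S^n)$. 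Applying this to $f=\varphi_l$ and using $\Delta_{S^n}\varphi_l=-l(l+n-1)\varphi_l$, $\|\varphi_l\|_\infty=\varphi_l(o)=1$, and $C_S^{-1}\varphi_l=\widehat S(l)^{-1}\varphi_l$, we get $|\widehat S(l)|^{-1}\leq C'\,(l(l+n-1))^{N'}$ for $l\geq 1$ (with $l=0$ absorbed into the constant), hence $|\widehat S(l)|\geq C''\,(1+l)^{-2N'}$ for a suitable $C''>0$. This is the asserted estimate with $M=2N'$.

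The main obstacle is precisely this last step — extracting the quantitative polynomial lower bound from the merely qualitative surjectivity hypothesis. The two ingredients that make it go through are the open mapping theorem, which is available only after upgrading surjectivity to bijectivity via $\widehat S(l)\neq 0$, and the description of the topology of $\mathscr E(S^n)$ by the seminorms $\|(\Delta_{S^n})^N\cdot\|_\infty$ from Theorem \ref{T:spheresmoothtopology}; because the spherical functions $\varphi_l$ are simultaneous eigenfunctions of $\Delta_{S^n}$ and of $C_S^{-1}$ with explicitly known eigenvalues, testing the continuity estimate against them reads off the bound directly.
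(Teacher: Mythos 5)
Your proof is correct and follows essentially the same route as the paper: the diagonalization \eqref{E:convdiagonalization} handles injectivity and the ``if'' direction of surjectivity, while the converse uses the open mapping theorem together with the seminorms of Theorem \ref{T:spheresmoothtopology} tested against the spherical functions $\varphi_l$. The only cosmetic difference is that you bound $\|C_S^{-1}f\|_\infty$ where the paper bounds the point evaluation $|(C_S^{-1}f)(o)|$, which is immaterial since $\|\varphi_l\|_\infty=\varphi_l(o)=1$.
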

\begin{proof}
The first assertion is obvious from \eqref{E:convdiagonalization}.

As for the second assertion, suppose that $\widehat S(l)$ satisfies the slow decay condition \eqref{E:lowerestimate}.  If $h\in\mathscr E(S^n)$, with $h=\sum_l\sum_m d_{lm}\,Y_{lm}$, then the Fourier coefficients $d_{lm}$ are rapidly decreasing in $l$, and \eqref{E:lowerestimate} shows that
$\widehat S(l)^{-1}d_{lm}$ is still rapidly decreasing in $l$.  If we put
$$
f=\sum_{l\geq 0}\sum_{m=1}^{d(l)} \widehat S(l)^{-1}d_{lm}\,Y_{lm},
$$
then $f\in\mathscr E(S^n)$ and \eqref{E:convdiagonalization} shows that $C_S(f)=h$.

Conversely, suppose that $C_S$ is surjective.  The relation \eqref{E:convdiagonalization} shows that $\widehat S(l)$ can never be zero, so $C_S$ is injective.  Since $\mathscr E(S^n)$ is a Fr\'echet space, the open mapping theorem therefore implies that $C_S$ is a homeomorphism.
Since evaluation at $o$ is a continuous linear functional on $\mathscr E(S^n)$, Theorem \ref{T:spheresmoothtopology}  implies  that there is a positive constant $B$ and a nonnegative integer $N$ such that
$$
|(C_S^{-1} f)(o)|\leq B\sum_{k=0}^N \|(\Delta_{S^n})^k f\|_\infty.
$$ 
for any $f\in\mathscr E(S^n)$.  Now on each $\mathcal H_l$, $C_S^{-1}$ is multiplication by the scalar $(\widehat S(l))^{-1}$.  In the above inequality, if we specialize $f$ to the spherical function $\varphi_l$ and note that $\|\varphi_l\|_\infty=\varphi_l(o)= 1$, we obtain
$$
\left|(\widehat S(l))^{-1}\right|\leq B\,\sum_{k=0}^N (l(l+n-1))^k\leq BN\,(l(l+n-1))^N.
$$
The estimate \eqref{E:lowerestimate} now follows, with $M=2N$.
\end{proof}

This completes our preliminary discussion.  For the rest of the section our focus will be the study of  solutions of 
the following shifted wave equation on  $S^n$:
\begin{equation}\label{E:modified-wave-sphere}
\left(\Delta_{S^n}-\left(\frac{n-1}{2}\right)^2\right) u(x,t)=\frac{\partial^2}{\partial t^2}\,u(x,t),
\end{equation}
where $u(x,t)\in C^\infty(S^n\times\mathbb R)$, where $\Delta_{S^n}$  acts on the first argument $x\in S^n$, and where $t$ denotes time.  This is the elliptic space analogue of the shifted wave equation on hyperbolic space $\mathbb H^n$ considered, for example, in \cite{GGA}, Chapter II, which can be solved using Asgeirsson's mean value theorem. (This latter shifted wave equation on $\mathbb H^n$ is a special case of the equation \eqref{E:modifiedwaveeqn} considered in \S4.)

As was mentioned earlier, equation \eqref{E:modified-wave-sphere} was studied in 1978 by Lax and Phillips \cite{LaxPhillips1978}, who used the easily obtained series solution to deduce Huygens' principle.  A conformal change of variables was then used to transform this equation to the standard wave equation in $\mathbb R^n$, from which the Euclidean Huygens' principle follows.  Equation \eqref{E:modified-wave-sphere} is an example of a rank one ``multitemporal wave equation'' on a compact symmetric space.  

Equation (\ref{E:modified-wave-sphere}) was also studied by Gonzalez and Zhang in 2006 \cite{GonzalezZhang2006}, where they obtained explicit mean value solutions for given initial conditions, and from which Huygens' principle is an obvious consequence.

If we assume the initial conditions
\begin{equation}\label{E:sherewaveinitconds}
\begin{split}
u(x,0)&=f_0(x)\\
\partial_t u(x,0)&=g(x),
\end{split}
\end{equation}
where $f_0,\,g\in C^\infty(S^n)$, we can easily obtain the series solution in \cite{LaxPhillips1978} as follows. 

Express the functions $f_0$ and $g$ in spherical harmonics by
\begin{align}
f_0(x)&=\sum_{l\geq 0} \sum_{m=1}^{d(l)} a_{lm}\,Y_{lm}(x)\label{E:sphericalinitpos}\\
g(x)&=\sum_{l\geq 0} \sum_{m=1}^{d(l)} b_{lm}\,Y_{lm}(x), \label{E:spherinitvel}
\end{align}
where $a_{lm}=\langle f_0,Y_{lm}\rangle$ and $b_{lm}=\langle g, Y_{lm}\rangle$. We note that the coefficients $a_{lm}$ and $b_{lm}$ are rapidly decreasing in $l$.

Now suppose that $u(x,t)\in C^\infty(S^n\times\mathbb R)$ solves the initial value problem \eqref{E:modified-wave-sphere} -- \eqref{E:spherinitvel}.  We can expand $u$ in spherical harmonics by
$$
u(x,t)=\sum_{l\geq 0} \sum_{m=1}^{d(l)} g_{lm}(t)\,Y_{lm}(x).
$$
Using \eqref{E:harmoniceigenfcn}, a separation of variables calculation leads quickly to the following series expansion for the solution $u(x,t)$:
\begin{equation}\label{E:modifiedwavespheresoln}
u(x,t)=\sum_{l\geq 0} \sum_{m=1}^{d(l)} \left( a_{lm}\,\cos\left(l+\frac{n-1}{2}\right) t+b_{lm}\,\frac{\sin(l+\frac{n-1}{2})t}{(l+\frac{n-1}{2})} \right)\,Y_{lm}(x).
\end{equation}
We observe that $u(x,t)$ is periodic in $t$ of period $4\pi$ when $n$ is even and $2\pi$ when $n$ is odd.

When $n$ is odd, $(n-1)/2$ is an integer, and Lax and Phillips
deduced Huygens' principle as follows.  Since
$Y_{lm}(-x)=(-1)^l\,Y_{lm}(x)$, \eqref{E:modifiedwavespheresoln} gives
$$u(-x,t+\pi)=(-1)^{\frac{n-1}{2}}\,u(x,t).$$  
Suppose that the initial data $f_1(x)$
and $f_2(x)$ are supported on a spherical cap $B_\epsilon(o)$, where (again)
$o$ denotes the north pole.  Then when $t=\pi$, the
solution $u(x,\pi)$ is supported in $B_\epsilon(-o)$ ($-o$ denoting
the south pole).  Since the shifted wave equation \eqref{E:modified-wave-sphere} is hyperbolic, the solution
propagates at unit speed forwards and backwards in time.  Thus, for any $t$ such that
$0<t<\pi$, the solution $x\mapsto u(x,t)$ is supported in both $B_{t+\epsilon}(o)$ and
$B_{\pi-t+\epsilon}(-o)$; the intersection of these two spherical caps is the band
$t-\epsilon<d(o,x) < t+\epsilon$.

We can also express the solution $u(x,t)$  in terms of convolutions on $S^n$.  
For each $t\in\mathbb R$, we observe that the coefficients $\cos\left(l+\frac{n-1}{2}\right)t$ and $\frac{\sin(l+(n-1)/2)t}{l+(n-1)/2}$ in \eqref{E:modifiedwavespheresoln} are bounded functions of $l$. By the Paley-Wiener Theorem (see \eqref{E:sphere-paleywiener} and the discussion right after it), there must exist unique distributions $S_t$ and $S_t'$ in $S^n$ in $\mathscr E'_K(S^n)$ such that
\begin{equation}\label{E:spherewavefundsoln}
\widehat S_t(l)=\frac{\sin(l+\frac{n-1}{2}) t}{l+\frac{n-1}{2}},\quad \widehat S_t'(l)=\cos\left(l+\frac{n-1}{2}\right)t,\qquad l\in\mathbb Z^+.
\end{equation}
Relations \eqref{E:convdiagonalization} and \eqref{E:modifiedwavespheresoln} then show that
\begin{equation}\label{E:sphereconvsoln}
u(x,t)=f_0*S'_t(x)+g*S_t(x).
\end{equation}
If $|t|<\pi$, it can be shown (\cite{OlafssonSchlichtkrull2008}) that the distributions $S_t$ and $S_t'$ are supported in the spherical cap $B_{|t|}(o)$, but we will not need this result. 

Here is the point where we can now discuss the snapshot problem for the shifted wave equation on $S^n$.  For convenience we call any $C^\infty$ solution $u(x,t)$ to \eqref{E:modified-wave-sphere} a \emph{wave} on $S^n$.  For any $t\in \mathbb R$ let us put $u_t(x)=u(x,t)$ and call $u_t$ the \emph{snapshot} of $u$ at time $t$.

Because waves on $S^n$ are periodic in $t$, it seems natural to expect that questions of existence and uniqueness related to the snapshot problem will already be meaningful when we consider just two snapshots of a given wave.

Let us first consider uniqueness. Fix any real number $\alpha\neq 0$, and suppose $u(x,t)$ is a wave on $S^n$ in which we are given the snapshots $u_0$ and $u_\alpha$ at times $t=0$ and $\alpha$, respectively.  By replacing $t$ by $t-\alpha$ if necessary, we can assume that $\alpha>0$. The series solution \eqref{E:modifiedwavespheresoln} then allows us to produce a formula for the snapshots $u_{m\alpha}$ in terms of $u_0$ and $u_\alpha$ for all $m\in\mathbb Z$.

To see this, let us first observe that for each $m\in\mathbb Z$, there is a $K$-invariant distribution $\Psi_{m,\alpha}\in\mathscr E'(S^n)$ such that
$$
\widehat \Psi_{m,\alpha}(l)=U_{m-1}\left(\cos \left(l+\frac{n-1}{2}\right)\alpha\right)=\frac{\sin(l+\frac{n-1}{2})m\alpha}{\sin(l+\frac{n-1}{2})\alpha}.
$$
This is because $U_{m-1}\left(\cos\left(l+\frac{n-1}{2}\right)\alpha \right)$, being a polynomial in the cosine, is a bounded function of $l$.  (This is also clear from looking at the right hand side above.) Hence \eqref{E:modifiedwavespheresoln} or \eqref{E:sphereconvsoln} shows that
\begin{equation}\label{E:modifiedwaverecusion}
u_{m\alpha}(x)=u_0*S'_{m\alpha} + (u_\alpha-u_0*S'_\alpha)*\Psi_{m,\alpha}.
\end{equation}
Again we recall that the wave $u(x,t)$ is periodic in $t$ of period $2\pi$ or $4\pi$, depending on whether $n$ is odd or even.  If $\alpha/\pi$ is irrational, the set of congruence classes of $m\alpha$ mod $2\pi$ or $4\pi$ is dense in $[0,2\pi]$ or $[0,4\pi]$, respectively.  By continuity, this shows that the snapshots $u_{m\alpha}$ determine the wave $u(x,t)$ uniquely.

We can also infer uniqueness from the series solution \eqref{E:modifiedwavespheresoln}.  Again suppose that we are given the snapshots $u_0$ and $u_\alpha$ of $u(x,t)$. Then the Fourier coefficients $a_{lm}$ of $u_0$ are uniquely determined.  The formula  \eqref{E:modifiedwavespheresoln} then shows that each of the coefficients $b_{lm}$ of the initial velocity $\partial_t u(x,0)$ can be uniquely determined from the formula for $u_\alpha$ \emph{provided that}  $\sin (l+\frac{n-1}{2})\alpha$ is never zero for any $l$.  (Once the coefficients $b_{lm}$ are known, the formula \eqref{E:modifiedwavespheresoln} will give us the wave $u(x,t)$.)  This will certainly be the case when $\alpha/\pi$ is irrational.

Even when $\alpha/\pi$ is rational, the coefficient $\sin (l+\frac{n-1}{2})\alpha$ is never zero in certain cases.  To see this, write 
$\alpha/\pi$ as a fraction $p/q$ in lowest terms, with $q>0$.  In case $n$ is even and $p$ is odd, the product $(l+\frac{n-1}{2})p$ is always a half integer, so $(l+\frac{n-1}{2})p\pi/q$ is never an integer multiple of $\pi$. Hence
$$
\sin \left(\left(l+\frac{n-1}{2}\right)\frac{p\pi}{q}\right)\neq 0 \quad\text{ for all }l.
$$
Thus for $n$ even and $p$ odd we can recover $b_{lm}$ for each $l$ and $m$, and hence $u$ is uniquely determined.

By contrast, when $n$ is odd or $p$ is even, the product $(l+\frac{n-1}{2})p$ is an integer which is divisible by $q$ for infinitely many $l$. For such $l$, we have $\sin\left((l+\frac{n-1}{2})p\pi/q\right)=0$, so $b_{lm}$ can never be determined.  In addition, for these $l$, there are uncountably many possible ways to choose $b_{lm}$ so as to make the Fourier coefficients in the series \eqref{E:spherinitvel} rapidly decreasing.

We have thus arrived at the following result.

\begin{theorem}\label{T:snapshot-sphere-uniqueness}
Fix a real number $\alpha>0$.  A wave $u(x,t)$ on $S^n$ is uniquely determined by its snapshots $u_0$ and $u_\alpha$ provided that either
\begin{enumerate}
\item $\alpha/\pi$ is irrational, or
\item $\alpha/\pi$ is a rational number $p/q$ in lowest terms, and $n$ is even and $p$ is odd.
\end{enumerate}
On the other hand, suppose that $\alpha/\pi$ is a rational number $p/q$ in lowest terms, and with either $p$ even or $n$ odd. Then there are uncountably many waves $u(x,t)$ with snapshots $u_0$ and $u_\alpha$.
\end{theorem}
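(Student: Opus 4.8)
The plan is to read everything off the series solution \eqref{E:modifiedwavespheresoln}. Since any wave on $S^n$ has the form
\begin{equation*}
u(x,t)=\sum_{l\geq 0}\sum_{m=1}^{d(l)}\left(a_{lm}\cos\left(l+\tfrac{n-1}{2}\right)t+b_{lm}\,\frac{\sin\left(l+\frac{n-1}{2}\right)t}{l+\frac{n-1}{2}}\right)Y_{lm}(x),
\end{equation*}
with $a_{lm},b_{lm}$ rapidly decreasing in $l$, I would first note that expanding $u_0$ and $u_\alpha$ in spherical harmonics makes $u_0$ determine every $a_{lm}=\langle u_0,Y_{lm}\rangle$, while the snapshot at $t=\alpha$ imposes
\begin{equation*}
\langle u_\alpha,Y_{lm}\rangle=a_{lm}\cos\left(l+\tfrac{n-1}{2}\right)\alpha+b_{lm}\,\frac{\sin\left(l+\frac{n-1}{2}\right)\alpha}{l+\frac{n-1}{2}}.
\end{equation*}
(When $n=1$ and $l=0$ the summand degenerates to $a_{01}+b_{01}t$; since $\alpha\neq 0$ two snapshots still recover $a_{01},b_{01}$, so this case is harmless and I may assume $l+\frac{n-1}{2}>0$.) It follows that $u$ is uniquely determined by $u_0,u_\alpha$ exactly when $\sin\left(l+\frac{n-1}{2}\right)\alpha\neq 0$ for all $l\geq 0$, and that non-uniqueness occurs as soon as this factor vanishes for even one value of $l$.

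The core of the argument is then the elementary observation that $\sin\left(l+\frac{n-1}{2}\right)\alpha=0$ if and only if $\tfrac{2l+n-1}{2}\cdot\tfrac{\alpha}{\pi}\in\mathbb Z$, i.e.\ (writing $\alpha/\pi=p/q$ in lowest terms) if and only if $2q\mid(2l+n-1)p$. I would then run through the cases. If $\alpha/\pi$ is irrational, $\tfrac{2l+n-1}{2}\cdot\tfrac{\alpha}{\pi}$ is irrational and hence never an integer, giving uniqueness. If $\alpha/\pi=p/q$ with $n$ even and $p$ odd, then $2l+n-1$ and $p$ are both odd, so $(2l+n-1)p$ is odd and cannot be divisible by the even number $2q$, again giving uniqueness. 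In the opposite situations I would exhibit an $l_0$ with the sine vanishing: if $n$ is odd then $2l+n-1=2\bigl(l+\tfrac{n-1}{2}\bigr)$ is even and the condition becomes $q\mid l+\tfrac{n-1}{2}$, which holds for infinitely many $l$; and if $n$ is even with $p$ even, then $q$ is odd with $\gcd(p/2,q)=1$, so the condition becomes $q\mid 2l+n-1$, which is solvable for $l$ modulo the odd number $q$ and hence holds for infinitely many $l$.

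Finally, for the non-uniqueness conclusion I would fix such an $l_0$ and consider, for $c\in\mathbb R$,
\begin{equation*}
w_c(x,t)=c\,\frac{\sin\left(l_0+\frac{n-1}{2}\right)t}{l_0+\frac{n-1}{2}}\,Y_{l_01}(x).
\end{equation*}
Using $\Delta_{S^n}Y_{l_01}=-l_0(l_0+n-1)Y_{l_01}$ from \eqref{E:harmoniceigenfcn} together with the identity $l_0(l_0+n-1)+\bigl(\tfrac{n-1}{2}\bigr)^2=\bigl(l_0+\tfrac{n-1}{2}\bigr)^2$, one checks that each $w_c$ is a smooth solution of \eqref{E:modified-wave-sphere} and that it vanishes at both $t=0$ and $t=\alpha$; hence if $u$ is any wave with the prescribed snapshots, then $\{u+w_c:c\in\mathbb R\}$ is an uncountable family of pairwise distinct waves sharing those two snapshots. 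I expect the only mildly delicate step to be the parity bookkeeping in the divisibility analysis of the second paragraph — there is no genuine analytic obstacle, since the vanishing or non-vanishing of $\sin\left(l+\frac{n-1}{2}\right)\alpha$ dictates everything.
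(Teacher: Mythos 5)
Your proof is correct and follows essentially the same route as the paper: read off $a_{lm}$ from $u_0$, observe that $b_{lm}$ is recoverable from $u_\alpha$ exactly when $\sin\bigl(l+\tfrac{n-1}{2}\bigr)\alpha\neq 0$, and settle that condition by the same parity/divisibility analysis of $2q\mid(2l+n-1)p$. Your explicit uncountable family $w_c$ and your note on the degenerate $n=1$, $l=0$ mode are small refinements of the paper's argument (which merely observes that the undetermined $b_{l_0 m}$ can be chosen in uncountably many rapidly decreasing ways), not a different method.
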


Conclusion (2) above seems rather surprising, since whenever $\alpha/\pi=p/q$, there are only finitely many congruence classes of $m\alpha$ modulo $4\pi$, so the formula \eqref{E:modifiedwaverecusion} will ostensibly give us only finitely many distinct snapshots  $u_{m\alpha}$ of the wave $u$ from the data giving $u_0$ and $u_\alpha$. 

Finally, we consider existence.  Let $\alpha$ be a positive real number, and suppose that we are given two functions $f_0$ and $f_\alpha$ in $\mathscr E(S^n)$.  We wish to determine whether there exists a wave $u(x,t)$ on $S^n$ such that $u_0=f_0$ and $u_\alpha=f_\alpha$.  It turns out that this will depend on whether $\alpha/\pi$ is a Liouville number, and for $n$ even, on whether $\alpha/\pi$ is a certain \emph{type} of Liouville number.  We now clarify the type of Liouville number we have in mind.

If $\beta$ is a real number, let us call $\beta$ a \emph{Liouville number of odd type} provided that for any integer $N>0$, there exists a fraction $p/q$, with $q$ \emph{odd} and $>1$, such that
the Liouville condition \eqref{E:liouville-def}, reprised below, holds:
$$
0<\left|\beta -\frac{p}{q}\right| < \frac{1}{q^N}.
$$
This means that $\beta$ can be rapidly approximated by fractions with odd denominators.

We now state some facts concerning Liouville numbers of odd type.  First we note that if $\beta$ is \emph{any} Liouville number, then for any $N$, the estimate above is satisfied for some fraction $p/q$ where the denominator $q$ is even.  (Thus it doesn't make sense to define Liouville numbers of even type.) In fact, given $N$, we know that there is a fraction $p_1/q_1$, with $q_1\geq 2$, such that
$$
0<\left|\beta -\frac{p_1}{q_1}\right| < \frac{1}{q_1^{2N}}.
$$
Then
$$
0<\left|\beta -\frac{2p_1}{2q_1}\right| < \left(\frac{4}{2q_1}\right)^N\cdot\frac{1}{(2q_1)^N} \leq\frac{1}{(2q_1)^N}.
$$
Next, we observe that the set of all Liouville numbers $\beta$ of odd type is uncountable and dense in $\mathbb R$. To see this, consider any irrational number in $[0,1]$ whose ternary expansion is 
\begin{equation}\label{E:ternarybeta}
\beta=\sum_{j=1}^\infty \frac{c_j}{3^{j!}},
\end{equation}
where each $c_j$ belongs to $\{0,1,2\}$.  For this $\beta$, we note that for any $N\geq 1$, the partial sum $\sum_{j=1}^N c_j\,3^{-j!}$ is a fraction of the form $p/3^{N!}$.
Thus
$$
0<\beta-\frac{p}{3^{N!}}=\sum_{j=N+1}^\infty \frac{c_j}{3^{j!}}<\sum_{l=(N+1)!}^\infty \frac{2}{3^l}=\frac{2}{3^{(N+1)!}}\cdot\frac 32<\frac{1}{(3^{N!})^N}
$$
Thus $\beta$ is a Liouville number of odd type.  The set of all irrational $\beta$ of the form \eqref{E:ternarybeta} is an uncountable subset of the closed interval $[0,1]$. Now any sum of the form $\beta+j/3^k$, where  $\beta$ is an irrational number of the kind given above, and $j$ and $k$ are integers with $k\geq 0$, is also a Liouville number of odd type, and the set of such sums is uncountable and dense in $\mathbb R$.  (Nonetheless, they form a set of measure zero, since the set of all Liouville numbers has measure zero.)

Finally, our definition of Liouville numbers of odd type is meaningful since the following proposition shows that not all Liouville numbers are of odd type.
\begin{proposition}\label{T:notoddtype}
Let 
$$
\beta=\sum_{j=1}^\infty \frac{1}{2^{j!}}.
$$
Then $\beta$ is a Liouville number which is not of odd type.
\end{proposition}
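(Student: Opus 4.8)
The plan is to prove the two assertions in turn, the second being the substantive one.

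That $\beta=\sum_{j\ge 1}2^{-j!}$ is a Liouville number is immediate: it has the shape $\sum_{k\ge1}a_k/q^{k!}$ with $q=2$ and every $a_k=1$, a form already recorded as Liouville in the discussion following Definition~\ref{D:liouville-number}. (One may verify it by hand as well: the $N$-th partial sum $p_N/2^{N!}$ satisfies $0<\beta-p_N/2^{N!}<2\cdot 2^{-(N+1)!}\le (2^{N!})^{-N}$, using $(N+1)!-N!=N\cdot N!\ge 1$.) In particular $\beta$ is irrational, hence $\beta\neq p/q$ for every rational $p/q$; so to show $\beta$ is \emph{not} of odd type it suffices to exhibit a single integer $N>0$ with $|\beta-p/q|\ge q^{-N}$ for all fractions $p/q$ with $q$ odd and $q>1$. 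I will argue that $N=5$ works.

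The guiding idea is that every rational approximating $\beta$ to high precision must have a denominator equal to a power of $2$ — in particular even — so an odd denominator is automatically penalized. Quantitatively, for $k\ge1$ write the partial sum $\beta_k=\sum_{j=1}^k2^{-j!}=a_k/2^{k!}$ with $a_k=\sum_{j=1}^k2^{k!-j!}$; the summand $j=k$ equals $2^0=1$ while all the others are even, so $a_k$ is \emph{odd}, and the tail satisfies $0<\beta-\beta_k<2^{1-(k+1)!}$ since the integers $j!$ with $j\ge k+1$ are distinct and all at least $(k+1)!$. Now fix $p/q$ with $q$ odd, $q>1$. Then $a_kq-p\,2^{k!}$ is an odd integer (odd minus even), hence nonzero, so $|\beta_k-p/q|\ge 1/(2^{k!}q)$; using $(k+1)!-k!=k\cdot k!$ and the triangle inequality,
\[
\Bigl|\beta-\tfrac pq\Bigr|\;>\;\frac{1}{2^{k!}q}-2^{1-(k+1)!}\;=\;\frac{1}{2^{k!}}\Bigl(\frac1q-2\cdot 2^{-k\cdot k!}\Bigr).
\]
I then choose $k=k(q)$ to be the least integer with $2^{k\cdot k!}\ge 4q$ (note $k\ge 2$ always). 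For that $k$ the parenthesis is $\ge 1/(2q)$, so $|\beta-p/q|>1/(2^{k!+1}q)$. Minimality of $k$ forces $2^{(k-1)(k-1)!}<4q$; since $k!/\bigl((k-1)(k-1)!\bigr)=k/(k-1)\le 2$ for $k\ge2$, this gives $2^{k!}<(4q)^2$, whence $|\beta-p/q|>1/(32q^3)\ge q^{-5}$ as soon as $q\ge 7$. The finitely many remaining small odd values (in fact only $q=3$ and $q=5$) are dispatched directly from $\beta\in(\tfrac34,\tfrac34+\tfrac1{32})$, which keeps $\beta$ a distance $\gg q^{-5}$ from $\tfrac1q\mathbb Z$. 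Hence $|\beta-p/q|\ge q^{-5}$ for all admissible $p/q$, so $\beta$ is not of odd type.

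The only delicate point is the calibration in the last step: one must pick $k(q)$ large enough that the tail $2^{-(k+1)!}$ is swamped by the ``parity gap'' $2^{-k!}/q$, yet small enough that minimality of $k$ pins $q$ from below strongly enough to reabsorb the factor $2^{k!}$ into a bounded power of $q$. This succeeds precisely because $k!$ and $(k-1)(k-1)!$ differ only by the bounded factor $k/(k-1)$, so the super-fast growth of the factorial makes each jump to the next admissible scale ``cheap''. I expect this calibration, together with the routine check that the handful of exceptional small denominators cause no trouble, to be essentially the only work in the proof.
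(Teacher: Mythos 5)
Your proof is correct, and it takes a genuinely different route from the paper's. The paper writes the odd integer $q$ in binary, locates its leading bit $2^m$, sets $k$ by $k!\le m<(k+1)!$, and then tracks the binary digits of $q\beta$ through a two-case analysis (according to whether $m\ge k\cdot k!-2$) to show that an isolated bit at position $k!$ or $(k+1)!$ keeps the fractional part of $q\beta$ at distance $>q^{-3}$ from $\mathbb Z$ for all sufficiently large odd $q$. You instead exploit the parity of the dyadic convergents: since $\beta_k=a_k/2^{k!}$ has odd numerator $a_k$, the integer $a_kq-p\,2^{k!}$ is odd for odd $q$, giving the clean lower bound $|\beta_k-p/q|\ge 1/(2^{k!}q)$, after which the only work is calibrating $k=k(q)$ so that the factorial tail is dominated while $2^{k!}$ stays bounded by a fixed power of $q$ (here the inequality $k!/((k-1)(k-1)!)=k/(k-1)\le 2$ does exactly what you claim). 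Your calibration checks out: the parenthesis $\tfrac1q-2\cdot 2^{-k\cdot k!}$ is indeed $\ge \tfrac1{2q}$ for your minimal $k$, minimality does give $2^{k!}<(4q)^2$, and $1/(32q^3)\ge q^{-5}$ precisely for $q\ge 6$, so isolating $q=3,5$ is necessary and your direct check via $\beta\in(\tfrac34,\tfrac34+\tfrac1{32})$ handles them. What your approach buys is a shorter, more conceptual argument that pinpoints \emph{why} odd denominators are penalized (the convergents' numerators are odd) and yields a uniform bound $|\beta-p/q|\ge q^{-5}$ for every odd $q>1$ rather than only for $q$ large; what the paper's digit-level argument buys is a marginally sharper exponent ($3$ instead of $5$), which is immaterial for the proposition. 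The Liouville verification at the start is also fine and matches the paper's earlier remark.
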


The fact that this $\beta$ is a Liouville number can be proved in the same way as the $\beta$ given in the ternary expansion \eqref{E:ternarybeta}.  We will postpone the rather technical \hyperlink{proofof5.6}{proof} of the rest of this proposition to the end of the section.

Here is our main theorem regarding the existence of waves in $S^n$ with snapshots at two given times.

\begin{theorem}\label{T:sphere-snapshot-existence}
Fix a real number $\alpha>0$, and suppose that we are given two functions $f_0$ and $f_\alpha$ in $\mathscr E(S^n)$.  Then there exists a wave $u(x,t)$ on $S^n$ with snapshots $u_0=f_0$ and $u_\alpha=f_\alpha$ if and only if the following condition holds:
\begin{equation}\label{E:sphere-snapshot-compatibility}
f_\alpha-f_0*S'_\alpha\in \mathscr E(S^n)* S_\alpha.
\end{equation}
Let $\beta=\alpha/\pi$.
\begin{enumerate}
\item If $n$ is odd, Condition \eqref{E:sphere-snapshot-compatibility} will hold for all pairs $(f_0,\,f_\alpha)$ if and only if $\beta$ is irrational and not a Liouville number.
\item If $n$ is even, Condition \eqref{E:sphere-snapshot-compatibility} will hold for all pairs $(f_0,\,f_\alpha)$ if and only if $\beta$ is one of the following types of numbers:
\begin{itemize}
\item $\beta$ is a rational number $p/q$ in lowest terms, with $p$ odd, or
\item $\beta$ is irrational and $\beta/2$ is not a Liouville number of odd type.
\end{itemize}
\end{enumerate}
In all the cases for $\beta$ enumerated above, the wave $u(x,t)$ with snapshots $(f_0,\,f_\alpha)$ is unique.
\end{theorem}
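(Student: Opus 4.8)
The plan is to convert the existence question into a surjectivity statement for the convolution operator $C_{S_\alpha}$ on $\mathscr E(S^n)$, and then to translate that surjectivity, through the Schur constants \eqref{E:spherewavefundsoln} and Theorem~\ref{T:spherconvproperties}, into a Diophantine condition on $\beta=\alpha/\pi$. First I would note that, since \eqref{E:modified-wave-sphere} is hyperbolic, a wave is determined by its Cauchy data $\bigl(u_0,\partial_t u(\cdot,0)\bigr)$, and by \eqref{E:sphereconvsoln} every wave with $u_0=f_0$ has the form $u(x,t)=f_0*S_t'(x)+g*S_t(x)$ with $g=\partial_t u(\cdot,0)\in\mathscr E(S^n)$, while conversely each such $g$ produces a wave. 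Hence a wave with $u_0=f_0$ and $u_\alpha=f_\alpha$ exists precisely when some $g\in\mathscr E(S^n)$ satisfies $g*S_\alpha=f_\alpha-f_0*S_\alpha'$, that is, exactly when \eqref{E:sphere-snapshot-compatibility} holds; and such a wave is unique, in each of the cases for $\beta$ listed in the theorem, by Theorem~\ref{T:snapshot-sphere-uniqueness}, since every such $\beta$ is either irrational or (with $n$ even) rational with odd numerator. Taking $f_0=0$, one sees that \eqref{E:sphere-snapshot-compatibility} holds for \emph{all} pairs $(f_0,f_\alpha)$ if and only if $C_{S_\alpha}$ is surjective on $\mathscr E(S^n)$.

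Next I would invoke Theorem~\ref{T:spherconvproperties} together with \eqref{E:spherewavefundsoln}: $C_{S_\alpha}$ is surjective if and only if there are $C>0$ and an integer $M\ge 0$ with
\[
|\widehat S_\alpha(l)|=\frac{\bigl|\sin\bigl(l+\tfrac{n-1}{2}\bigr)\alpha\bigr|}{\,l+\tfrac{n-1}{2}\,}\ge C(1+l)^{-M}\qquad(l=0,1,2,\dots).
\]
Since the denominator is linear in $l$, this is equivalent to a polynomial lower bound for $\bigl|\sin\pi(2l+n-1)\tfrac{\beta}{2}\bigr|$, and since $|\sin\pi x|$ is bounded above and below by constant multiples of $\mathrm{dist}(x,\mathbb Z)$, it becomes the requirement that $\mathrm{dist}\!\bigl((2l+n-1)\tfrac{\beta}{2},\mathbb Z\bigr)$ stay bounded below by a fixed negative power of $l$. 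When $n$ is odd, $2l+n-1$ is even and $(2l+n-1)\tfrac{\beta}{2}=k\beta$ with $k=l+\tfrac{n-1}{2}$ running over all integers $\ge\tfrac{n-1}{2}$; when $n$ is even, $j:=2l+n-1$ runs over all odd integers $\ge n-1$ and the relevant quantity is $\mathrm{dist}(j\gamma,\mathbb Z)$ with $\gamma:=\beta/2$.

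For $n$ odd, if $\beta$ is rational then $\sin\pi k\beta=0$ for infinitely many $k$ and no such bound exists, while if $\beta$ is irrational a standard argument --- reducing the nearest fraction $p/k$ to lowest terms and comparing with Definition~\ref{D:liouville-number} --- shows that the polynomial lower bound holds exactly when $\beta$ is not a Liouville number; this is case (1). For $n$ even with $\beta=p/q$ in lowest terms, $\sin\pi j\gamma=\sin\tfrac{\pi jp}{2q}$ vanishes iff $2q\mid jp$: when $p$ is odd this is impossible for odd $j$, so $|\sin\pi j\gamma|$ takes only finitely many positive values and the bound holds, whereas when $p$ is even then $q$ is odd and $q\mid j$ for infinitely many odd $j$, so the bound fails. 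For $n$ even with $\beta$ (hence $\gamma$) irrational, I would prove that a polynomial lower bound for $\mathrm{dist}(j\gamma,\mathbb Z)$ over \emph{odd} $j$ holds iff $\gamma$ is not a Liouville number of odd type: if $\gamma$ is of odd type, then for arbitrarily large $N$ there is a fraction $p/q$ with $q$ odd and $>1$ and $0<|\gamma-p/q|<q^{-N}$, whence $\mathrm{dist}(q\gamma,\mathbb Z)\le q\,|\gamma-p/q|<q^{1-N}$, which defeats any polynomial lower bound; conversely, if $\gamma$ is not of odd type there is an $N_0$ with $|\gamma-a/b|\ge b^{-N_0}$ for all integers $a$ and all odd $b>1$, and then, given an odd $j$, one writes the nearest fraction $p/j$ in lowest terms as $a/b$ with $b$ odd and $b\mid j$, finding $\mathrm{dist}(j\gamma,\mathbb Z)=j\,|\gamma-a/b|\ge j^{1-N_0}$ when $b>1$, and $\mathrm{dist}(j\gamma,\mathbb Z)=j\,\mathrm{dist}(\gamma,\mathbb Z)\ge\mathrm{dist}(\gamma,\mathbb Z)>0$ when $b=1$; either way a polynomial lower bound holds. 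This gives case (2), and tracing back through the reductions finishes the proof.

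The main obstacle will be the $n$-even irrational case just sketched. Restricting $j$ to odd integers is exactly what forces the denominators in the relevant rational approximations to $\gamma$ to be odd, so that ``Liouville of odd type'', rather than plain ``Liouville'', is the governing notion; making the equivalence precise requires the passage to lowest terms to keep the denominator odd, together with the separate --- and somewhat fiddly --- handling of the degenerate case in which the nearest fraction to $j\gamma$ reduces to an integer.
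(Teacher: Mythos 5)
Your proposal is correct and follows essentially the same route as the paper: existence for all pairs is reduced to surjectivity of $C_{S_\alpha}$, then via Theorem~\ref{T:spherconvproperties} and \eqref{E:spherewavefundsoln} to a polynomial lower bound on $l\mapsto\bigl|\sin\pi(2l+n-1)\beta/2\bigr|$, followed by the same case analysis on the parity of $n$ and the arithmetic nature of $\beta$ (with uniqueness supplied by Theorem~\ref{T:snapshot-sphere-uniqueness}, where the paper instead observes that surjectivity forces injectivity). Your phrasing via $\mathrm{dist}(\cdot,\mathbb Z)$ and reduction to lowest terms is only a cosmetic variant of the paper's nearest-integer argument; note in particular that since Definition of ``odd type'' quantifies over all fractions with odd denominator $>1$ (not just reduced ones), the lowest-terms step and the $b=1$ degenerate case you flag as the main obstacle can be bypassed by applying the negated definition directly to $[j\gamma]/j$.
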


\begin{remark}
\hfill
\begin{enumerate}
\item[(a)] This theorem therefore implies that for almost all $\alpha$, there is a unique solution to the snapshot problem for any given pair $(f_0,f_\alpha)$ in $\mathscr E(S^n)$.
\item[(b)] If $\beta/2$ is a Liouville number of odd type, then so is $\beta$.  However, if $\beta$ is a Liouville number of odd type, there is no guarantee that $\beta/2$ is Liouville of odd type.
\item[(c)] In the case of even-dimensional spheres, it is surprising (to us) that the snapshot problem has unique solutions even for certain rational or Liouville $\beta$.
\end{enumerate}
\end{remark}

\begin{proof}
The first assertion \eqref{E:sphere-snapshot-compatibility} of the theorem is an immediate consequence of the convolution solution \eqref{E:sphereconvsoln}.

For the second assertion, it is clear that the condition \eqref{E:sphere-snapshot-compatibility} holds for all pairs $(f_0,f_\alpha)$ if and only if $C_{S_\alpha}(\mathscr E(S^n))=\mathscr E(S^n)$.  By Theorem \ref{T:spherconvproperties} and relation \eqref{E:spherewavefundsoln}, the set of all  $\alpha>0$ such that $C_{S_\alpha}$ is surjective on $\mathscr E(S^n)$ is precisely the set of all $\alpha>0$ for which the function
$$
\widehat S_\alpha(l)= \frac{\sin (l+\frac{n-1}{2})\alpha}{l+\frac{n-1}{2}},\qquad\qquad l\in\mathbb Z^+
$$
satisfies the slow decay condition \eqref{E:lowerestimate}.  This set is clearly the same as the set of all $\alpha>0$ for which the sequence
\begin{equation}\label{E:sine-coeff}
l\mapsto \sin \left(l+\frac{n-1}{2}\right)\alpha=\sin \left(l+\frac{n-1}{2}\right)\beta\pi
\end{equation}
satisfies \eqref{E:lowerestimate}.  Recall that the surjectivity of $C_{S_\alpha}$ implies its injectivity, so any $\alpha$ such that the sequence \eqref{E:sine-coeff} satisfies \eqref{E:lowerestimate} must give rise to a unique wave $u(x,t)$ corresponding to every pair of snapshots $(f_0,\,f_\alpha)$.

\emph{Case 1: $n$ is odd.}  Then $m=(n-1)/2$ is an integer.

If $\beta$ is rational, then $(l+m)\beta$ is an integer for infinitely many $l$, so the sequence \eqref{E:sine-coeff} is zero  for infinitely many $l$, and hence it never satisfies \eqref{E:lowerestimate}.  

Next let us assume that $\beta$ is irrational.

Let $[\;\cdot\;]$ denote the \emph{nearest integer} function on $\mathbb R$.  (The exact definition is not important: this function assigns to each $t\in\mathbb R$ the integer nearest $t$ if $t$ is not a half integer, and one of the nearest two integers in case $t$ is a half integer.)

If $\beta$ is a Liouville number, then there exists a sequence $l_1<l_2<\cdots$ in $\mathbb Z^+$ such that
$$
\left| \left(l_k+m\right)\beta-\left[\left(l_k+m\right)\beta\right]\right| < (l_k+m)^{-k} 
$$
for all positive integers $k$.    Hence
\begin{align}\label{E:n-odd-beta-liouville}
\left|\sin \left\{(l_k+m)\beta\pi\right\}\right|&=\left|\sin \left\{\left(l_k+m\right)\beta-\left[\left(l_k+m\right)\beta\right]\right\}\pi\right|\nonumber \\
&\leq \left| \left(l_k+m\right)\beta-\left[\left(l_k+m\right)\beta\right]\right|\,\pi\nonumber \\
&\leq (l_k+m)^{-k}\,\pi.
\end{align}
Thus if $\beta$ is Liouville, the sequence \eqref{E:sine-coeff} can never be slowly decaying.

If $\beta$ is not a Liouville number (and is irrational), then there is an integer $N>0$ such that
$$
|(l+m)\beta-[(l+m)\beta]|\geq (l+m)^{-N}
$$
for all $l\in\mathbb Z^+$.  Now the terms on the left are always $\leq 1/2$, so
\begin{align}\label{E:n-odd-beta-not-liouville}
\left|\sin \left\{(l+m)\beta\pi\right\}\right|&=\left|\sin\left\{((l+m)\beta-[(l+m)\beta])\pi\right\}\right|\nonumber \\
&\geq \frac 12\,\left|(l+m)\beta-[(l+m)\beta]\right|\pi\nonumber \\
&\geq\frac{\pi}{2}\,(l+m)^{-N}.
\end{align}
For this $\beta$, the sequence \eqref{E:sine-coeff} therefore satisfies the decay condition \eqref{E:lowerestimate}.

This covers all the possibilities for $\beta$ when $n$ is odd, and proves Conclusion (1) in the statement of Theorem \ref{T:sphere-snapshot-existence}.

\emph{Case 2: $n$ is even.}  Then $m=n-1$ is an odd integer.

Assume first that $\beta$ is a rational number $p/q$ in lowest terms.  If $p$ is even, then $q$ is odd and the sequence
\begin{equation}\label{E:sine-sequence}
l\mapsto \sin\left(\left(l+\frac{m}{2}\right)\frac{p\pi}{q}\right)
\end{equation}
is zero for infinitely many $l$, so it cannot be slowly decaying.  If $p$ is odd, the sequence \eqref{E:sine-sequence} is periodic (of period $2q$) and never zero, so is slowly decaying.

Next assume that $\beta$ is irrational.  If $\beta/2$ is a Liouville number of odd type, then there is a sequence $l_1<l_2<\cdots$ of nonnegative integers such that
$$
\left|(2l_k+m)(\beta/2) - [(2l_k+m)(\beta/2)]\right|< (2l_k+m)^{-k}
$$
for all $k$.  Hence just as wth the inequality \eqref{E:n-odd-beta-liouville} in Case 1, we obtain
$$
\left|\sin\{(2l_k+m)(\beta\pi/2)\}\right|<(m+2l_k)^{-k}\,\pi.
$$
For these $\beta$, the sequence \eqref{E:sine-coeff} therefore cannot be slowly decaying.

Finally, let us assume that $\beta/2$ is not a Liouville number of odd type (and is irrational).  Then there is a positive integer $N$ such that
$$
\left|(2l+m)(\beta/2) - [(2l+m)(\beta/2)]\right|\geq(2l+m)^{-N}
$$
for all $l\in\mathbb Z^+$. Just as with inequality \eqref{E:n-odd-beta-not-liouville} in Case 1, we can again argue that
$$
\left|\sin\{(2l+m)(\beta\pi/2)\}\right|\geq\frac{\pi}{2}\,(2l+m)^{-N}
$$
for all $l$.  For these $\beta$, the sequence \eqref{E:sine-coeff} is therefore slowly decaying.

We have covered all the possible cases for $\beta$ when $n$ is even.  This completes the proof of Theorem \ref{T:sphere-snapshot-existence}.
\end{proof}

We finish the section by providing a proof of Proposition \ref{T:notoddtype}.

\hypertarget{proofof5.6}{\emph{Proof of Proposition \ref{T:notoddtype}:}}  We wish to show that the Liouville number $\beta=\sum_{j=1}^\infty 2^{-j!}$ is not of odd type.

Recall that $[t]$ denotes the nearest integer to $t$. The proposition will follow if we can prove that
\begin{equation}\label{E:liouville-est}
|q\beta-[q\beta]| > q^{-3}
\end{equation}
for all sufficiently large odd integers $q$.

To prove \eqref{E:liouville-est}, suppose that $q$ is any odd integer greater than $2^{3!}$.  Since $q$ is odd, we can write $q$ uniquely as
\begin{equation}\label{E:qbinaryrep}
q=2^m+a_{m-1}\cdot 2^{m-1}+ \cdots +a_1\cdot 2+1,
\end{equation}
for some positive integer $m\geq 3!$, and where $a_j\in\{0,1\}$ for $j=1,\ldots,m-1$.  Let $k$ be the positive integer such that $k!\leq m <(k+1)!$.  We will now consider two cases for $m$.

\emph{Case 1:} $m\geq k\cdot k! - 2$.

For this $m$ we have
\begin{align}
q\beta&=\sum_{j=1}^k q\cdot 2^{-j!} + q\cdot 2^{-(k+1)!} + \sum_{j=k+2}^\infty q \cdot 2^{-j!}\nonumber\\
&=\sum_{j=1}^k q\cdot 2^{-j!} + \left( 2^{m-(k+1)!} +\sum_{s=1}^{m-1} a_s\cdot 2^{s-(k+1)!}\right)\nonumber\\
&\qquad\qquad\qquad\qquad\qquad\qquad + 2^{-(k+1)!} + \sum_{j=k+2}^\infty q\cdot 2^{-j!}\nonumber\\
&=p+\sum_{r=1}^{(k+1)!-1} d_r \cdot 2^{-r} + 2^{-(k+1)!} + \sum_{j=k+2}^\infty q\cdot 2^{-j!},\label{E:q-beta1}
\end{align}
where $p$ is an integer and the $d_r$'s belong to $\{0,1\}$.

Since $q<2^{m+1}\leq 2^{(k+1)!}$, we observe that
\begin{equation}\label{E:q-beta2}
\sum_{j=k+2}^\infty q\cdot 2^{-j!} < \sum_{j=k+2}^\infty 2^{m+1-j!}\leq \sum_{j=k+2}^\infty 2^{(k+1)!-j!}<\sum_{l=(k+1)(k+1)!}^\infty 2^{-l}.
\end{equation}
This inequality and \eqref{E:q-beta1} then show that the integer $p$ in \eqref{E:q-beta1} is in fact equal to the floor function $\floor{q\beta}$.  

Now if the nearest integer $[q\beta]$ is the floor $\floor{q\beta}$, then \eqref{E:q-beta1} shows that
$|q\beta-[q\beta]|>2^{-(k+1)!}$.  On the other hand, if $[q\beta]$ equals the ceiling $\ceil{q\beta}$, then \eqref{E:q-beta1} and \eqref{E:q-beta2} show that
\begin{align*}
|q\beta-[q\beta]|&=1-\left(\sum_{j=1}^{(k+1)!-1} d_r \cdot 2^{-r} + 2^{-(k+1)!} + \sum_{j=k+2}^\infty q\cdot 2^{-j!}\right)\\
& > 1-\left(\sum_{j=1}^{(k+1)!-1} d_r \cdot 2^{-r} + 2^{-(k+1)!} + \sum_{l=(k+1)(k+1)!}^\infty 2^{-l}\right)\\
& >  2^{-(k+1)!-1}.
\end{align*}
In either case, we therefore have $|q\beta - [q\beta]|>2^{-(k+1)!-1}$.  Now since we have assumed that $m\geq k\cdot k! -2$, we see that $q> 2^m\geq 2^{k\cdot k!-2}$, and thus
\begin{align*}
q^{1/k}  > 2^{k!}\cdot 2^{-2/k} &\implies q^{-(k+1)/k} < 2^{-(k+1)!}\cdot 2^{2(k+1)/k}\\
&\implies 2^{-(k+1)!-1} > q^{-1-1/k}\cdot 2^{-2}\cdot 2^{-2/k}\cdot 2^{-1}\\
&\implies 2^{-(k+1)!-1} > q^{-2}\cdot 2^{-4} > q^{-3}.
\end{align*}
It then follows that $|q\beta-[q\beta]|>q^{-3}$.
\medskip

\emph{Case 2:} $m < k\cdot k!-2$.

From the expression \eqref{E:qbinaryrep} for $q$, we obtain
\begin{align}
q\beta&=\sum_{j=1}^{k-1} q\cdot 2^{-j!} + q\cdot 2^{-k!} +\sum_{j=k+1}^\infty q\cdot 2^{-j!}\nonumber \\
&= p + \sum_{s=1}^{k!-1} d_s\cdot 2^{-s} + 2^{-k!} + \sum_{j=k+1}^\infty q\cdot 2^{-j!},\label{E:q-beta3}
\end{align}
where again $p$ is an integer and the $d_s$ belong to $\{0,1\}$.  Now the assumption on $m$ in the present case implies that for any $j\geq k+1$,
$$
m+1-j!\leq m+1-(k+1)! \leq k\cdot k!-2-(k+1)!=-k!-2.
$$
Since $q<2^{m+1}$, this implies that
\begin{equation}\label{E:q-beta4}
\sum_{j=k+1}^\infty q\cdot 2^{-j!} < \sum_{j=k+1}^\infty 2^{m+1-j!} <\sum_{r=k!+2}^\infty 2^{-r}.
\end{equation}
This inequality and \eqref{E:q-beta3} thus show that the integer $p$ in \eqref{E:q-beta3} is in fact the floor function $\floor{q\beta}$.  Now if the nearest integer $[q\beta]$ happens to be the floor $p=\floor{q\beta}$, then
\eqref{E:q-beta3} implies that
$$
|q\beta -[q\beta]|>2^{-k!}.
$$
If the nearest integer $[q\beta]$ equals the ceiling $\ceil{q\beta}$, then \eqref{E:q-beta3} and \eqref{E:q-beta4} imply that
\begin{align*}
|q\beta -[q\beta]|& = 1-\left(\sum_{s=1}^{k!-1} d_s\cdot 2^{-s} + 2^{-k!} + \sum_{j=k+1}^\infty q\cdot 2^{-j!}\right)\\
& > 1 - \left(\sum_{s=1}^{k!-1} d_s\cdot 2^{-s} + 2^{-k!} + \sum_{r=k!+2}^\infty 2^{-r}\right)\\
& \geq 2^{-k!-1}.
\end{align*}
Either way, we have $|q\beta -[q\beta]| > 2^{-k!-1}$.  Now $q > 2^m\geq 2^{k!}$, so
$$
2^{-k!-1} > 2^{-1}\cdot q^{-1} > q^{-2}.
$$
It follows that
$$
|q\beta -[q\beta]| > q^{-2} > q^{-3},
$$
proving our claim \eqref{E:liouville-est} in this case.  This finishes the proof of Proposition \ref{T:notoddtype}.
\qed

\emph{Remark.} The shifted wave equation \eqref{E:modified-wave-sphere} was also studied by B. Rubin \cite{RubinSmallDenom}, who considered the ``shifted Cauchy data'':  $(\partial_t u)(x,0)=0$ and  $u(x,\alpha)=f_\alpha(x)$, where $f_\alpha$ belongs to the Sobolev space $\mathcal H^s(S^n)$.  One can see that this is a type of snapshot problem.  Considered in our context of smooth functions (so $s=\infty$), the convolution solution \eqref{E:sphereconvsoln} shows that the problem of existence reduces to the question of whether the convolution operator $C_{S'_\alpha}\colon\mathscr E(S^n)\to\mathscr E(S^n)$ is surjective. In Theorem 6.1 of that paper he obtains Liouville-type conditions on $\alpha/\pi$, when it is irrational, which are sufficient to guarantee the existence of solutions.

\bibliographystyle{alpha}

\bibliography{Gonzalez}
\end{document}